\newtheorem{thm}{Theorem}[section]
\newtheorem{cor}[thm]{Corollary}
\newtheorem{prop}[thm]{Proposition}
\newtheorem{lem}[thm]{Lemma}
\theoremstyle{definition}
\newtheorem{defn}[thm]{Definition}
\newtheorem{rem}[thm]{Remark}
\newenvironment{customthm}[1]
  {\innercustomthm}
  {\endinnercustomthm}
\newcommand{\C}{\mathbb{C}}
\newcommand{\R}{\mathbb{R}}
\newcommand{\N}{\mathbb{N}}
\newcommand{\norm}[1]{\left\lVert#1\right\rVert}
\let\c@equation\c@thm
\numberwithin{equation}{section}
\title[Self-adjoint operators in $\mathcal{Z}$-stable C$^*$-algebras]{Self-adjoint operators in $\mathcal{Z}$-stable C$^*$-algebras with Prescribed Spectral Data}
\author{Andrew S. Toms and Hao Wan}
\date{\today}
\begin{document}
\begin{abstract}
We consider the variety of spectral measures that are induced by quasitraces on the spectrum of a self-adjoint operator in a simple separable unital and $\mathcal{Z}$-stable C$^*$-algebra.  This amounts to a continuous map from the simplex of quasitraces of the C$^*$-algebra into regular Borel probability measures on the spectrum of the operator under consideration.  In the case of a connected spectrum this data determines the unitary equivalence class of the operator, and may be reduced to to the case of an operator with spectrum equal to the closed unit interval.  We prove that any continuous map from the simplex of quasitraces with the topology of pointwise convergence into regular faithful Borel probability measures on $[0,1]$ with the Levy-Prokhorov metric is realized by some self-adjoint operator in the C$^*$-algebra.    

\end{abstract}

\maketitle

\section{Introduction} \label{section: intro}
It is a classical result of H. Weyl that the distance between unitary equivalence classes of a self-adjoint operators on a finite-dimensional vector space is determined by spectral data:  the normalized trace on $n \times n$ matrices induces a probability measure on the spectrum of the operator, and the distance between two unitary equivalence classes is computed as the L\'evy-Prokhorov distance between the corresponding measures.  This classification is completed by the easy fact that any average of $n$ Dirac probability measures supported on $\mathbb{R}$ can occur as the spectral measure of such an operator.  

Weyl's result admits a remarkable degree of generalization to infinite-dimensional C$^*$-algebras.  For instance, Azoff and Davis proved that for self-adjoint operators in $B(H)$ for a Hilbert space $H$, approximate unitary equivalence classes are completely determined by the so-called crude multiplicity function which captures spectral data \cite{AD84}.  Sherman later generalized this result to normal operators in an arbitrary von Neumann algebra $\mathcal{M}$ \cite{She07}. 

More recently such results have pushed into the realm of separable C$^*$-algebras which are $\mathcal{Z}$-stable, an essential condition in Elliott's classification program for separable amenable C$^*$-algebras.  
The Riesz-Markov-Kakutani Theorem entails that every normalized quasitrace on a unital C$^*$-algebra $A$ induces a regular Borel probability measure on the spectrum of each normal operator in $a \in A$.  
Taken collectively these measures can be viewed as a single continuous affine map $f_a$ from the simplex of normalized quasitraces on $A$ into probability measures on the spectrum $\sigma(a)$ equipped with the weak topology, a topology which is metrizable via the L\'evy-Prokhorov metric.  Jacelon, Strung and the first author proved in \cite{JST15} that the distance $d_U(a,b)$ between the unitary orbits of self-adjoint operators $a$ and $b$ with unit interval spectrum in a unital simple separable C$^*$-algebra $A$ which is exact and $\mathcal{Z}$-stable is
\[
\sup_{\tau \in \mathrm{T(A)}} d_{LP}(f_a(\tau),f_b(\tau)),
\]
where $d_{LP}$ denotes the L\'evy-Prokhorov distance and $\mathrm{T}(A)$ denotes the tracial state space of $A$.  Briefly, the answer is again spectral.  Missing, however, is the range of possibilities for $f_a$.  Here we close this gap using tools from the theory of the Cuntz semigroup:  

\begin{thm}\label{main}
    Let $A$ be unital simple separable $\mathcal{Z}$-stable C$^*$-algebra with nonempty quasitrace simplex $\mathrm{QT}(A)$.  Let $h$ be a continuous affine map from $\mathrm{QT}(A)$ into the set $\mathcal{P}([0,1])^+$ of faithful regular Borel probability measures on $[0,1]$.  It follows that there is a self-adjoint element $a \in A$ with spectrum equal to $[0,1]$ such that $f_a = h$.  
\end{thm}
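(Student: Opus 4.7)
The plan is to realize $a$ as $\phi(\mathrm{id}_{[0,1]})$ for a unital $*$-homomorphism $\phi : C([0,1]) \to A$ that is constructed from a morphism of Cuntz semigroups encoding the data $h$. For each $t \in [0,1]$, define the target rank function $r_t(\tau) := h(\tau)((t,1])$. By continuity and affineness of $h$ together with the faithfulness hypothesis on $h(\tau)$, the family $\{r_t\}_{t \in [0,1]}$ is a pointwise-decreasing, right-continuous family of lower-semicontinuous affine functions on $\mathrm{QT}(A)$, with $r_t(\tau) > 0$ for every $t < 1$ and $r_0(\tau) \le 1 = d_\tau(1_A)$.

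The first main step is to realize each $r_t$ as a rank in $\mathrm{Cu}(A)$. Since $A$ is simple, unital, and $\mathcal{Z}$-stable with $\mathrm{QT}(A) \neq \emptyset$, strict comparison holds and the rank map from $\mathrm{Cu}(A)$ onto the appropriate cone of lower-semicontinuous affine functions on $\mathrm{QT}(A)$ is surjective, thanks to the Brown--Perera--Toms and Antoine--Perera--Thiel description of $\mathrm{Cu}(A)$ for $\mathcal{Z}$-stable algebras. I would select positive contractions $b_t \in A$ with $d_\tau(b_t) = r_t(\tau)$, arranged inductively so that $b_s \precsim b_t$ whenever $s \ge t$ and $\sup_{s > t}[b_s] = [b_t]$ in $\mathrm{Cu}(A)$. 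Since $\mathrm{Cu}(C([0,1]))$ is generated as a Cu-semigroup by the classes $[(\mathrm{id}-t)_+]$ together with $[1]$, I would then define a Cu-morphism $\Phi : \mathrm{Cu}(C([0,1])) \to \mathrm{Cu}(A)$ by $\Phi([(\mathrm{id}-t)_+]) := [b_t]$ and $\Phi([1]) := [1_A]$, extending by additivity and suprema. The required order, way-below, and continuity properties are verified from the corresponding properties of the family $r_t$.

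The second main step is to lift $\Phi$ to an actual $*$-homomorphism. I would invoke an existence theorem for Cu-morphisms into $\mathcal{Z}$-stable codomains --- for instance a Robert--Santiago-type classification for $*$-homomorphisms from $1$-dimensional NCCW complexes such as $C([0,1])$ --- to produce a unital $*$-homomorphism $\phi : C([0,1]) \to A$ with $\mathrm{Cu}(\phi) = \Phi$. Setting $a := \phi(\mathrm{id}_{[0,1]})$, self-adjointness and the containment $\sigma(a) \subseteq [0,1]$ are automatic, the equality $\sigma(a) = [0,1]$ follows from $r_t(\tau) > 0$ for $t < 1$, and for each $t$ and $\tau$ one computes $d_\tau((a-t)_+) = d_\tau(b_t) = h(\tau)((t,1])$, which pins down the spectral measure $f_a(\tau) = h(\tau)$.

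The hard part will be this last lifting step: packaging the abstract Cu-data so that an off-the-shelf existence theorem applies. If no such theorem applies directly, I would instead construct $\phi$ as the limit of $*$-homomorphisms defined on dyadic piecewise-linear subalgebras of $C([0,1])$, at each stage realizing finitely many of the $b_t$ via mutually orthogonal positive elements in $A$ with prescribed rank functions (using $\mathcal{Z}$-stability and strict comparison to produce such orthogonal families with arbitrary precision), and controlling the L\'evy--Prokhorov distance between the induced spectral maps and $h$ so that the approximations converge in norm to a self-adjoint $a$ with $\sigma(a) = [0,1]$ and $f_a = h$.
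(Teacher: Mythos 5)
Your overall strategy coincides with the paper's: build a $\mathbf{Cu}$-morphism from $\mathrm{Cu}(C[0,1])$ into $\mathrm{Cu}(A)$ encoding $h$, lift it with a Robert-type theorem (which applies here because $A$ has stable rank one), and set $a = \phi(\mathrm{id})$; your final verification that $f_a = h$ and $\sigma(a)=[0,1]$ is also essentially the paper's. But there is a genuine gap in how you construct the $\mathbf{Cu}$-morphism. Under the isomorphism $\mathrm{Cu}(C[0,1]) \cong \mathrm{Lsc}([0,1],\overline{\N})$, the class $[(\mathrm{id}-t)_+]$ is the indicator function of $(t,1]$, and the sub-$\mathbf{Cu}$-semigroup generated by these classes together with $[1]$ under addition and suprema of increasing sequences contains only \emph{non-decreasing} functions $[0,1]\to\overline{\N}$: there is no subtraction in a $\mathbf{Cu}$-semigroup, so you can never reach, say, $\chi_{(1/4,1/2)}$. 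Hence ``extending by additivity and suprema'' does not define $\Phi$ on all of $\mathrm{Cu}(C[0,1])$, while the lifting theorem requires a morphism defined on the whole semigroup. The repair is to define $\Phi(f)(\tau)=\int_{[0,1]} f\,dh(\tau)$ for every noncompact $f\in\mathrm{Lsc}([0,1],\overline{\N})$ (and $n[1_A]$ for the compact, i.e.\ constant finite, ones); your $r_t$ are then just the special values $\Phi(\chi_{(t,1]})$. This is exactly the paper's $\gamma_h$.

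Relatedly, you have located the difficulty in the wrong place. The lifting is the off-the-shelf step (Robert's theorem applies verbatim to $C[0,1]$ with stable-rank-one codomain), so your fallback construction via dyadic piecewise-linear subalgebras is unnecessary. The real work, which your sentence ``the required order, way-below, and continuity properties are verified from the corresponding properties of the family $r_t$'' conceals, is proving that $\Phi$ preserves the compact containment relation $\ll$. For noncompact $f\ll g$ one must exhibit a continuous function $r$ on $\mathrm{QT}(A)$ with $\Phi(f)\leq r<\Phi(g)$ pointwise; this requires both a characterization of $f\ll g$ in $\mathrm{Lsc}([0,1],\overline{\N})$ producing a continuous interpolant lying strictly below $g$ on a nonempty open interval, and a matching characterization of $\ll$ in $\mathrm{Cu}(A)$ in terms of such continuous separating functions on $\mathrm{QT}(A)$. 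Both rest on the Kat\v{e}tov--Tong insertion theorem and constitute the main technical content of the paper. Finally, choosing an uncountable coherent family of representatives $b_t$ with $b_s\precsim b_t$ for $s\geq t$ and $\sup_{s>t}[b_s]=[b_t]$ is delicate and avoidable: one can work entirely in the abstract picture $\mathrm{Cu}(A)\cong \mathrm{V}(A)\sqcup\mathrm{LAff}(\mathrm{QT}(A))_{++}$ without ever selecting representatives.
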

\noindent
We note that nuclearity and even exactness are not required here, whatever the ultimate answer to the question of whether quasitraces are traces might be.
Theorem \ref{main} is proved by thoroughly modern means:  we use the map $h$ to generate a Cuntz semigroup morphism $\gamma_h: \mathrm{Cu}(C[0,1]) \to \mathrm{Cu}(A)$ then appeal to a remarkably general lifting result of Robert to obtain a $*$-homomorphism $\Gamma_h:C[0,1] \to A$ with $\mathrm{Cu}(\Gamma_h) = \gamma_h$.  The desired operator $a$ is then $\Gamma_h(\mathrm{id})$.  Along the way we give characterizations of compact containment in $\mathrm{Cu}(C[0,1])$ which we expect will have further applications.

The paper is organized as follows:  Section \ref{section: prelim} recalls essential results from measure theory and the theory of the Cuntz semigroup;  Section \ref{section: cc} gives new characterizations of the compact containment relation
in the Cuntz semigroup of certain C$^*$-algebras via rank functions; Section \ref{section: proof of main} constructs the Cuntz semigroup morphism which is then lifted to a $*$-homomorphism to prove Theorem \ref{main}.

\section{Preliminaries} \label{section: prelim}

\subsection{The L\'evy-Prokhorov metric and weak convergence}
\begin{defn}[{\cite{Bil99}}]
Let $(M, d)$ be a metric space with its Borel $\sigma$-algebra $\mathcal{B}(M)$. Let $\mathcal{P}(M)$ denote the collection of all regular probability measures on the measurable space $(M, \mathcal{B}(M))$.  We say that $\mu \in \mathcal{P}(M)$ is {\it faithful} if $\mu(U) >0$ for every nonempty open subset $U$ of $M$, and denote the set of all such measures by $\mathcal{P}(M)^+$.
For a subset $A \subseteq M$, define the $\varepsilon$-neighborhood of $A$ by
\[A^{\varepsilon}:=\{p \in M \mid \exists q \in A, d(p, q)<\varepsilon\}.\]
The \textit{L\'evy-Prokhorov metric} $d_{LP}: \mathcal{P}(M)^2 \rightarrow[0,+\infty)$ is defined by \[d_{LP}(\mu, \nu):=\inf \left\{\varepsilon>0 \mid \mu(A) \leq \nu\left(A^{\varepsilon}\right)+\varepsilon \text { and } \nu(A) \leq \mu\left(A^{\varepsilon}\right)+\varepsilon, \forall A \in \mathcal{B}(M)\right\}.\]
\end{defn}

There are several equivalent definitions of weak convergence of a sequence of measures, and the equivalence of these conditions is known as the Portmanteau Theorem, which we recall here.

\begin{thm}[{\cite[Theorem 2.1]{Bil99}}]
    
Let $(M, d)$ be a metric space with its Borel $\sigma$-algebra $\mathcal{B}(M)$. Let $(P_n)_{n\in \N}$ be a sequence of probability measures on $(M, \mathcal{B}(M))$.  Let $\mathrm{E}_n$ denote the expectation or $L^1$ norm with respect to $P_n$, while E denotes expectation or the $L^1$ norm with respect to $P \in \mathcal{P}(M)$.  The following are equivalent:
\begin{enumerate}
    \item $\mathrm{E}_n[f] \rightarrow \mathrm{E}[f]$ for all bounded, continuous functions $f$;
    \item $\mathrm{E}_n[f] \rightarrow \mathrm{E}[f]$ for all bounded and Lipschitz functions $f$;
    \item $\lim \sup \mathrm{E}_n[f] \leq \mathrm{E}[f]$ for every upper semi-continuous function $f$ bounded from above;
    \item $\liminf \mathrm{E}_n[f] \geq \mathrm{E}[f]$ for every lower semi-continuous function $f$ bounded from below;
    \item $\lim \sup P_n(C) \leq P(C)$ for all closed sets $C$ of space $M$;
    \item $\liminf P_n(U) \geq P(U)$ for all open sets $U$ of space $M$;
    \item $\lim P_n(A)=P(A)$ for all continuity sets $A$ of measure $P$.
\end{enumerate}
If $(P_n)$ and $P$ satisfy any of (1) through (7) above then we say that $(P_n)$ converges weakly to $P$.
\end{thm}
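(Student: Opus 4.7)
The plan is to establish the seven equivalences via the scheme $(1) \Leftrightarrow (2) \Rightarrow (5) \Leftrightarrow (6) \Rightarrow (7) \Rightarrow (1)$, closing the main loop, together with $(3) \Leftrightarrow (4)$, $(5) \Rightarrow (3)$, and $(3) \wedge (4) \Rightarrow (1)$ to absorb the semicontinuous conditions. The implication $(1) \Rightarrow (2)$ is trivial since bounded Lipschitz functions are bounded continuous, and $(3) \Leftrightarrow (4)$ follows by replacing $f$ with $-f$.

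The central analytic step is $(2) \Rightarrow (5)$. For a closed set $C \subseteq M$ and $k \in \N$, define
\[
f_k(x) := \max\{0,\, 1 - k \cdot d(x, C)\},
\]
which takes values in $[0,1]$, equals $1$ on $C$, vanishes outside the open $(1/k)$-neighborhood of $C$, and is $k$-Lipschitz. Since $\chi_C \leq f_k$, hypothesis (2) gives $\limsup_n P_n(C) \leq \limsup_n \mathrm{E}_n[f_k] = \mathrm{E}[f_k]$, while $f_k \searrow \chi_C$ pointwise and dominated convergence yield $\mathrm{E}[f_k] \to P(C)$; letting $k \to \infty$ produces (5). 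Passing to complements via $P(M \setminus A) = 1 - P(A)$ gives $(5) \Leftrightarrow (6)$. For $(5) \wedge (6) \Rightarrow (7)$, a $P$-continuity set $A$ (i.e.\ $P(\partial A) = 0$) satisfies $P(\overline{A}) = P(A^\circ) = P(A)$, and sandwiching
\[
P(A) = P(A^\circ) \leq \liminf_n P_n(A^\circ) \leq \limsup_n P_n(\overline{A}) \leq P(\overline{A}) = P(A)
\]
forces $P_n(A) \to P(A)$.

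For $(7) \Rightarrow (1)$, fix a bounded continuous $f$ and, after an affine rescaling, assume $0 \leq f \leq M$. The layer-cake identity gives $\mathrm{E}_n[f] = \int_0^M P_n(\{f > t\})\, dt$ and similarly for $\mathrm{E}$. Since $\partial\{f > t\} \subseteq \{f = t\}$ and the pushforward $P \circ f^{-1}$ on $[0, M]$ admits at most countably many atoms, $\{f > t\}$ is a $P$-continuity set for Lebesgue-almost every $t \in [0, M]$. Hypothesis (7) then gives pointwise convergence $P_n(\{f > t\}) \to P(\{f > t\})$ a.e.\ on $[0, M]$, and the uniform bound $P_n(\cdot) \leq 1$ licenses dominated convergence, yielding $\mathrm{E}_n[f] \to \mathrm{E}[f]$.

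To incorporate the semicontinuous statements, for $(5) \Rightarrow (3)$ assume $f$ is upper semicontinuous with $0 \leq f \leq M$; each $\{f \geq t\}$ is closed, so reverse Fatou (with dominator $1$) gives
\[
\limsup_n \mathrm{E}_n[f] = \limsup_n \int_0^M P_n(\{f \geq t\})\, dt \leq \int_0^M P(\{f \geq t\})\, dt = \mathrm{E}[f].
\]
Then $(6) \Rightarrow (4)$ follows symmetrically, and since any bounded continuous $f$ is simultaneously upper and lower semicontinuous, combining (3) and (4) sandwiches $\mathrm{E}_n[f]$ and recovers (1). The main obstacle is the countability-of-atoms step in $(7) \Rightarrow (1)$: one must argue that the pushforward of a regular probability measure under a bounded measurable function has at most countably many atoms, then pair this with a careful application of the bounded convergence theorem on $[0, M]$. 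Everything else reduces to bookkeeping with the Lipschitz approximation $f_k$ and the layer-cake formula.
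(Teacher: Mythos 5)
The paper does not prove this statement: it is quoted verbatim from Billingsley \cite{Bil99} as a classical result (the Portmanteau Theorem), so there is no internal proof to compare against. Judged on its own, your argument is the standard textbook proof and the logical scheme $(1)\Rightarrow(2)\Rightarrow(5)\Leftrightarrow(6)\Rightarrow(7)\Rightarrow(1)$, supplemented by $(5)\Rightarrow(3)$, $(6)\Rightarrow(4)$, $(3)\Leftrightarrow(4)$ and $(3)\wedge(4)\Rightarrow(1)$, does close all the equivalences. The Lipschitz approximation $f_k(x)=\max\{0,1-k\,d(x,C)\}$ for $(2)\Rightarrow(5)$, the sandwich $P(A^\circ)\leq\liminf_n P_n(A)\leq\limsup_n P_n(A)\leq P(\overline{A})$ for $(7)$, and the layer-cake argument with countably many atoms of the pushforward $P\circ f^{-1}$ for $(7)\Rightarrow(1)$ are all correct.

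The one genuine (though easily repaired) gap is in $(5)\Rightarrow(3)$: statement $(3)$ concerns upper semicontinuous $f$ bounded only \emph{from above}, and such a function cannot in general be brought into $[0,M]$ by an affine rescaling, since it may be unbounded below. You should first prove the inequality for bounded u.s.c.\ $f$ (where your reverse-Fatou/layer-cake argument works), then handle the general case by truncating from below: set $f_N=\max\{f,-N\}$, note $\limsup_n \mathrm{E}_n[f]\leq\limsup_n \mathrm{E}_n[f_N]\leq \mathrm{E}[f_N]$, and let $N\to\infty$ using monotone convergence (allowing the value $-\infty$ for $\mathrm{E}[f]$). The same remark applies symmetrically to $(6)\Rightarrow(4)$. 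Since $(3)\wedge(4)\Rightarrow(1)$ only ever invokes $(3)$ and $(4)$ for bounded continuous $f$, this repair does not disturb the rest of your scheme.
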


\begin{thm}[{\cite[Theorem 6.8]{Bil99}}] \label{weak conv equiv to conv in L-P metric}
    If $(M, d)$ is separable and complete, then convergence of measures in the L\'evy-Prokhorov metric is equivalent to weak convergence of measures.
\end{thm}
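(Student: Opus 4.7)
The plan is to prove the equivalence in two directions, only the second of which uses separability and completeness of $(M,d)$.

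For the forward direction, assume $d_{LP}(P_n, P) \to 0$; I verify the closed-set criterion (5) of the Portmanteau Theorem. Let $C \subseteq M$ be closed and fix $\varepsilon > 0$. Eventually $d_{LP}(P_n, P) < \varepsilon$, so $P_n(C) \leq P(C^\varepsilon) + \varepsilon$. Since $C$ is closed, $\bigcap_{\varepsilon > 0} C^\varepsilon = C$, and continuity of $P$ from above gives $P(C^\varepsilon) \to P(C)$ as $\varepsilon \to 0^+$. Taking $\limsup$ in $n$ and then $\varepsilon \to 0^+$ yields $\limsup_n P_n(C) \leq P(C)$, so by Portmanteau $P_n \to P$ weakly. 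Note this argument is independent of any topological hypothesis on $M$.

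For the reverse direction, assume $P_n \to P$ weakly and fix $\varepsilon > 0$. By Ulam's theorem (valid in the Polish space $(M,d)$), $P$ is tight, so there is a compact $K \subseteq M$ with $P(K) > 1 - \varepsilon$. The map $\delta \mapsto P(\{x : d(x,K) = \delta\})$ is positive for at most countably many $\delta$, so I may pick a small $\delta > 0$ such that $V := \{x : d(x,K) < \delta\}$ is open with $P(\partial V) = 0$. Separability together with compactness of $K$ allows me to cover $K$ by finitely many open balls of radius less than $\varepsilon$ contained in $V$; for each center only countably many sphere radii carry positive $P$-mass, so I perturb radii to arrange that each ball is a $P$-continuity set. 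Disjointifying these balls yields continuity sets $A_1, \ldots, A_N$ of diameter less than $2\varepsilon$ whose union covers $K$, and the complement $A_0 := M \setminus \bigcup_{i=1}^N A_i$ is a $P$-continuity set with $P(A_0) < 2\varepsilon$.

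By Portmanteau criterion (7), $P_n(A_i) \to P(A_i)$ for every $i$, so for $n$ large $\sum_{i=0}^N |P_n(A_i) - P(A_i)| < \varepsilon$. Given any Borel $B \subseteq M$, the union $\bigcup\{A_i : i \geq 1,\ A_i \cap B \neq \emptyset\}$ lies inside $B^{2\varepsilon}$, so
\[
P_n(B) \leq P_n(A_0) + \sum_{i \geq 1,\ A_i \cap B \neq \emptyset} P_n(A_i) \leq 3\varepsilon + P(B^{2\varepsilon}).
\]
The symmetric estimate is proved identically, giving $d_{LP}(P_n, P) \leq 3\varepsilon$ eventually. The main technical obstacle is the construction of the finite $P$-continuity partition of small-diameter pieces: separability supplies a countable stock of candidate balls, the countable-sphere observation upgrades them to continuity sets, and completeness enters only through Ulam's theorem to guarantee that essentially all of the mass of $P$ can be captured compactly. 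Everything else is bookkeeping combining these three ingredients with Portmanteau criterion (7).
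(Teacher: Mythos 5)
This is a quoted result (\cite[Theorem~6.8]{Bil99}); the paper gives no proof of its own, so there is nothing internal to compare against. Your argument is correct and is essentially the standard textbook proof. The forward direction (closed-set Portmanteau criterion via $P_n(C)\le P(C^\varepsilon)+\varepsilon$ and $\bigcap_{\varepsilon>0}C^\varepsilon=C$) is exactly the usual one and, as you note, needs no hypothesis on $M$. For the converse, your partition of $M$ into finitely many $P$-continuity sets of diameter less than $2\varepsilon$ plus a remainder of small mass, followed by the inclusion $\bigcup\{A_i : A_i\cap B\neq\emptyset\}\subseteq B^{2\varepsilon}$ and Portmanteau~(7), is the classical argument; all the individual steps (countably many spheres of positive mass per center, boundaries of finite Boolean combinations of continuity sets, the symmetric estimate) check out, and in fact $P(A_0)\le 1-P(K)<\varepsilon$, so your bound of $2\varepsilon$ is even slightly generous. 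The only substantive difference from Billingsley's proof is your use of Ulam's theorem to produce a compact carrier of most of the mass: this is where completeness enters for you, whereas the standard route covers all of $M$ by countably many $\varepsilon$-balls that are continuity sets (using separability alone) and selects finitely many capturing mass $1-\varepsilon$, showing that the equivalence actually holds for separable $M$ without completeness. Since completeness is part of the hypothesis as stated, your variant is perfectly legitimate; it just hides the fact that the hypothesis is stronger than needed for this equivalence.
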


\subsection{The Riesz-Markov-Kakutani Representation Theorem}
    Recall the following representation theorem due to Riesz, Markov, and Kakutani.
\begin{thm}[{\cite[Theorem 2.14]{Rud87}}]

    Let $X$ be a locally compact Hausdorff space. For any continuous linear functional $\psi$ on $C_0(X)$, there is a unique complex-valued regular Borel measure $\mu$ on $X$ such that
$$
\psi(f)=\int_X f(x) d \mu(x), \quad \forall f \in C_0(X)
$$
A complex-valued Borel measure $\mu$ is called regular if the positive measure $|\mu|$ is regular. The norm of $\psi$ as a linear functional is the total variation of $\mu$, that is
$$
\|\psi\|=|\mu|(X)
$$
Finally, $\psi$ is positive if and only if the measure $\mu$ is positive.
\end{thm}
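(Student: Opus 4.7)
The plan is to reduce the theorem to the positive case by a Jordan-type decomposition and then to assemble the general complex measure from four positive pieces. First, I would write $\psi = \psi_r + i\psi_i$ as the sum of its real and imaginary parts, where each of $\psi_r, \psi_i$ is a bounded real-linear functional on the real Banach space $C_0(X,\R)$. For such a real functional $\phi$, I would define $\phi^+(f) := \sup\{\phi(g) : 0 \leq g \leq f\}$ for $f \geq 0$, verify that $\phi^+$ is positively homogeneous and additive on the nonnegative cone, extend it by linearity to a bounded positive functional on $C_0(X,\R)$, and check that $\phi^- := \phi^+ - \phi$ is positive as well. This reduces the existence statement to the positive case.

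For a positive $\psi$, I would carry out the standard outer-measure construction: for open $U \subseteq X$ set
$$\mu(U) := \sup\{\psi(f) : f \in C_c(X),\ 0 \leq f \leq 1,\ \mathrm{supp}(f) \subseteq U\},$$
and for arbitrary $E \subseteq X$ set $\mu^*(E) := \inf\{\mu(U) : E \subseteq U,\ U \text{ open}\}$. The technical core is then to verify that $\mu^*$ is an outer measure; that open sets satisfy the Carath\'eodory criterion (using Urysohn's lemma for locally compact Hausdorff spaces together with partitions of unity subordinate to finite open covers of compact sets); that the induced Borel measure is inner regular on open sets and outer regular on all Borel sets; and that $\psi(f) = \int f\, d\mu$ for all $f \in C_c(X)$. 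Extending the representation formula from $C_c(X)$ to $C_0(X)$ follows by uniform density and the $L^1$-continuity of integration. Applying this to each of $\psi_r^\pm$ and $\psi_i^\pm$ produces positive regular Borel measures $\mu_1, \mu_2, \mu_3, \mu_4$, and I would set $\mu := (\mu_1 - \mu_2) + i(\mu_3 - \mu_4)$; then $|\mu| \leq \mu_1 + \mu_2 + \mu_3 + \mu_4$, and regularity of $|\mu|$ follows from that domination.

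For uniqueness and the norm equality, I would exploit the polar decomposition $d\mu = g\, d|\mu|$ with $|g| = 1$ almost everywhere. By regularity of $|\mu|$ and Lusin's theorem combined with Tietze extension, $\bar g$ can be approximated in $L^1(|\mu|)$ by functions $f \in C_0(X)$ with $\|f\|_\infty \leq 1$. On the one hand this gives $|\mu|(X) = \int \bar g\, d\mu = \lim \psi(f) \leq \|\psi\|$; combined with the trivial bound $|\psi(f)| \leq \|f\|_\infty |\mu|(X)$, this yields $\|\psi\| = |\mu|(X)$. On the other hand, if $\mu$ and $\nu$ both represent $\psi$, then $\rho := \mu - \nu$ annihilates $C_0(X)$, and the same $L^1$-approximation applied to $\rho$ gives $|\rho|(X) = 0$. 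The positivity equivalence is then immediate: if $\mu \geq 0$ then $\psi(f) = \int f\, d\mu \geq 0$ for $f \geq 0$, and conversely if $\psi \geq 0$ then uniqueness forces $\mu$ to coincide with the positive measure already produced by the construction above.

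The main obstacle is the measure-theoretic construction for positive functionals, specifically the verification of Carath\'eodory measurability of open sets and countable additivity of $\mu^*$ on disjoint Borel sets; these rely on delicate separation arguments using Urysohn's lemma in the locally compact (non-compact) Hausdorff setting, and on controlling supports of test functions so that finite sums of $\psi(f_i)$ faithfully approximate $\mu$ of a disjoint union. The complex assembly, uniqueness, norm computation and positivity equivalence are comparatively formal once the positive case is in hand.
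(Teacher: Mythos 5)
The paper does not prove this statement: it is quoted verbatim from the cited reference (Rudin, \emph{Real and Complex Analysis}) as a classical background result, so there is no in-paper argument to compare yours against. Judged on its own, your sketch is a correct outline of the standard proof. The one place where you genuinely depart from Rudin's own route is the reduction to the positive case: Rudin constructs a single dominating positive functional $\Lambda(f)=\sup\{|\Phi(h)|: h\in C_c(X),\ |h|\le f\}$, obtains the positive measure $\lambda$ representing $\Lambda$, and then realizes $\Phi$ via a Radon--Nikodym density in the unit ball of $L^\infty(\lambda)$, whereas you split $\psi$ into four positive pieces via the Jordan decomposition $\phi^+(f)=\sup\{\phi(g):0\le g\le f\}$. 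Both work; your version requires checking additivity of $\phi^+$ on the positive cone (the standard $g_1=\min(g,f_1)$, $g_2=g-g_1$ argument), while Rudin's gives the bound $|\Phi(f)|\le\Lambda(|f|)$ more directly and makes the norm identity $\|\psi\|=|\mu|(X)$ slightly cleaner. Two small points worth making explicit if you write this up: the measures $\mu_1,\dots,\mu_4$ are finite because $\phi^\pm$ are bounded by $\|\phi\|$, and it is this finiteness that upgrades the inner regularity delivered by the Riesz construction (a priori only on open sets and sets of finite measure) to full regularity; and the domination argument $|\mu|\le\mu_1+\mu_2+\mu_3+\mu_4$ does yield regularity of $|\mu|$, but only because the dominating measure is finite and regular. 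Neither is a gap, just detail to supply.
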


We are interested in the special case where $X$ is a compact metric space and $\psi$ is a tracial state on $C(X)$.

\begin{cor}
    Let $X$ be a compact metric space. For any state $\tau$ on $C(X)$ (which is automatically tracial), there is a unique regular Borel probability measure $\mu$ on $X$ such that
$$
\tau(f)=\int_X f(x) d \mu(x), \quad \forall f \in C(X)
$$
Conversely, if $\mu$ is a regular Borel probability measure on $X$, then 
$$
\tau(f)=\int_X f(x) d \mu(x), \quad \forall f \in C(X)
$$ defines a tracial state $\tau$ on $C(X)$.
This bijective correspondence in fact defines a homeomorphism between $(\mathcal{P}(X),d_{LP})$ and $(\mathrm{T}(C(X)),w^{*})$.
\end{cor}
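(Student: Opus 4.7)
The plan is to derive the corollary in two stages: first establish the bijection on the underlying sets using the preceding Riesz--Markov--Kakutani theorem, then promote this bijection to a homeomorphism using the Portmanteau theorem and Theorem \ref{weak conv equiv to conv in L-P metric}.

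For the bijection, I would apply RMK directly. A state $\tau$ on $C(X)$ is a positive linear functional satisfying $\|\tau\|=\tau(1)=1$, so the theorem supplies a unique regular positive Borel measure $\mu$ with total mass $|\mu|(X)=\|\tau\|=1$, i.e., a probability measure. In the other direction, integration against any regular Borel probability measure yields a positive linear functional whose norm is pinned at $1$ by its value at the constant function $1$. Commutativity of $C(X)$ renders the trace condition $\tau(fg)=\tau(gf)$ automatic, so every state is tracial.

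For the homeomorphism, I would verify sequential continuity in both directions. This is sufficient because both spaces are metrizable: $(\mathcal{P}(X),d_{LP})$ by definition, and $(\mathrm{T}(C(X)),w^*)$ because $C(X)$ is separable (as $X$ is compact metric), which forces the weak-$*$ topology on the unit ball of $C(X)^*$ to be metrizable. Suppose $\mu_n\to\mu$ in $d_{LP}$. Since $X$ is separable and complete, Theorem \ref{weak conv equiv to conv in L-P metric} yields weak convergence $\mu_n\to\mu$, and clause (1) of the Portmanteau theorem then gives $\int_X f\,d\mu_n\to \int_X f\,d\mu$ for every bounded continuous $f$. Compactness of $X$ identifies the bounded continuous functions with all of $C(X)$, so this is exactly $w^*$-convergence $\tau_{\mu_n}\to\tau_\mu$. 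The converse reverses the same chain of biconditionals.

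I anticipate no genuine obstacle: all the substantive work is handled by the cited theorems, and the argument amounts to bookkeeping. The only minor subtlety worth flagging is that the norm identity in RMK must be invoked to pin the bijection to probability measures rather than arbitrary finite positive ones; this is already guaranteed by the version of RMK stated above. Alternatively, one could observe that both spaces are compact Hausdorff (by Banach--Alaoglu on the $T(C(X))$ side and the usual tightness argument on the $\mathcal{P}(X)$ side) so that a continuous bijection in either direction would automatically be a homeomorphism, but the Portmanteau-based argument gives both directions in one stroke.
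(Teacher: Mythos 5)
Your proposal is correct and is exactly the standard derivation the paper has in mind: the paper states this corollary without proof, treating it as an immediate consequence of the Riesz--Markov--Kakutani theorem together with Theorem \ref{weak conv equiv to conv in L-P metric} and the Portmanteau theorem, which is precisely the chain of equivalences you assemble. The one point worth making explicit (which you do flag) is the reduction of continuity to sequential continuity via metrizability of $(\mathrm{T}(C(X)),w^{*})$, using separability of $C(X)$ for $X$ compact metric; with that in place the argument is complete.
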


\subsection{The Cuntz Semigroup}

We refer the reader to \cite{GP24} and \cite{APT11} for the basic theory of the Cuntz semigroup. For the convenience of the reader we recall here the aspects of this theory that are essential to the sequel.

\begin{defn}[{\cite[Definition 2.1]{GP24}}]
    Let $A$ be a C$^*$-algebra and let $a, b \in A_{+}$. We say that $a$ is \textit{Cuntz subequivalent to} $b$ (in $A$), denoted $a \precsim b$ (or $a \precsim_{A} b$ if there is a need to specify the ambient $\mathrm{C}^*$-algebra), if there exists a sequence $\left(r_n\right)_{n \in \mathbb{N}}$ in $A$ such that $\lim _{n \rightarrow \infty}\left\|r_n b r_n^*-a\right\|=0$. Equivalently, for every $\varepsilon>0$ there exists $r \in A$ such that $\left\|r b r^*-a\right\|<\varepsilon$.
We say that $a$ and $b$ are \textit{Cuntz equivalent}, written $a \sim b$, if $a \precsim b$ and $b \precsim a$.
\end{defn}

For a function $f: X \longrightarrow \R$, we denote its open support by
\[\operatorname{supp}_{\mathrm{o}}(f):=\{x \in X: f(x) \neq 0\}.\]

\begin{prop}[{\cite[Proposition 2.2]{GP24}}] \label{<= for elements in C(X)}
    Let $X$ be a locally compact, Hausdorff space, and let $f, g \in$ $C_0(X)$ be positive functions. Then $f \precsim g$ if and only if
\[\operatorname{supp}_{\mathrm{o}}(f) \subseteq \operatorname{supp}_{\mathrm{o}}(g).\]
Equivalently, $g(x)=0$ implies $f(x)=0$, for all $x \in X$.
\end{prop}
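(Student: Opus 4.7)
The plan is to prove the two directions of the equivalence separately. The forward implication—that $f \precsim g$ forces $\operatorname{supp}_{\mathrm{o}}(f) \subseteq \operatorname{supp}_{\mathrm{o}}(g)$—is essentially pointwise. Given any $\varepsilon > 0$, Cuntz subequivalence yields $r \in C_0(X)$ with $\|rgr^* - f\| < \varepsilon$. Viewed as functions, $rgr^* = |r|^2 g$, which vanishes at every point where $g$ vanishes. Consequently $|f(x)| < \varepsilon$ wherever $g(x) = 0$, and since $\varepsilon$ was arbitrary we conclude $f(x) = 0$ at such $x$, i.e., $\operatorname{supp}_{\mathrm{o}}(f) \subseteq \operatorname{supp}_{\mathrm{o}}(g)$.

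For the converse I would approximate $f$ by $f_\varepsilon := (f - \varepsilon)_+$ and construct an explicit $r_\varepsilon$ with $r_\varepsilon g r_\varepsilon^* = f_\varepsilon$. The key geometric observation is that $K_\varepsilon := \{x : f(x) \geq \varepsilon\}$ is compact, since $f \in C_0(X)$, and is contained in $\operatorname{supp}_{\mathrm{o}}(f) \subseteq \operatorname{supp}_{\mathrm{o}}(g)$, so $g$ attains a positive minimum $\delta > 0$ on $K_\varepsilon$. The open set $\{g > \delta/2\}$ then contains $K_\varepsilon$, and a Urysohn-type construction for locally compact Hausdorff spaces produces $\phi \in C_0(X)$ with $0 \leq \phi \leq 1$, $\phi \equiv 1$ on $K_\varepsilon$, and $\operatorname{supp}(\phi)$ a compact subset of $\{g > \delta/2\}$.

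With this cutoff in hand, I would define
\[
r_\varepsilon(x) := \begin{cases} \phi(x)\sqrt{f_\varepsilon(x)/g(x)}, & g(x) > 0, \\ 0, & g(x) = 0. \end{cases}
\]
Away from the compact set $\operatorname{supp}(\phi)$ the function $r_\varepsilon$ is identically zero, so $r_\varepsilon \in C_0(X)$, and on $\operatorname{supp}(\phi) \subseteq \{g > \delta/2\}$ the formula is a ratio of continuous functions with positive denominator, hence continuous. A direct computation gives $r_\varepsilon g r_\varepsilon^* = \phi^2 f_\varepsilon$, and because $\phi \equiv 1$ on $\{f \geq \varepsilon\} \supseteq \operatorname{supp}(f_\varepsilon)$, this agrees with $f_\varepsilon$ everywhere. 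Therefore
\[
\|r_\varepsilon g r_\varepsilon^* - f\| = \|f_\varepsilon - f\| = \sup_{x \in X} \min(f(x), \varepsilon) \leq \varepsilon,
\]
and taking $\varepsilon \to 0$ yields $f \precsim g$.

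The only step with any genuine content is the construction of the cutoff $\phi$ and the verification that $r_\varepsilon$ extends continuously across the boundary of $\{g > 0\}$; once that is in place the algebraic identity $r_\varepsilon g r_\varepsilon^* = f_\varepsilon$ falls out, and the norm estimate is immediate. I expect this to be the main (and only) obstacle, and it is handled by routine use of Urysohn's lemma in the locally compact Hausdorff setting.
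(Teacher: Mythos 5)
Your proof is correct and complete; both directions are sound, and the one delicate point (continuity of $r_\varepsilon$ across the boundary of $\{g>0\}$) is handled properly by forcing $\operatorname{supp}(\phi)$ into $\{g>\delta/2\}$, so that $r_\varepsilon$ is continuous on the two open sets $\{g>\delta/2\}$ and $X\setminus\operatorname{supp}(\phi)$ which cover $X$. The paper states this proposition without proof, citing \cite[Proposition 2.2]{GP24}; your argument (pointwise vanishing for the forward direction, and the cut-down $(f-\varepsilon)_+$ with a Urysohn cutoff for the converse) is the standard proof of this fact, so there is nothing to compare against and nothing to add.
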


\begin{lem}[{\cite[Lemma 2.5]{GP24}}] \label{norm close subequivalence}
    Let $A$ be a $\mathrm{C}^*$-algebra, let $\varepsilon>0$, and let $a, b \in A_{+}$ with $\|a-b\|<$ $\varepsilon$. Then there exists a contraction $r \in A$ with $r b r^*=(a-\varepsilon)_{+}$. In particular, $(a-\varepsilon)_{+} \precsim b$.
\end{lem}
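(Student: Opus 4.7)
The plan is to construct $r$ by continuous functional calculus applied to $a$, and then to eliminate the residual approximation error using the slack provided by the $\varepsilon$-cutoff.

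First I would introduce the auxiliary function $g:[0,\infty)\to[0,1]$ given by $g(t) = ((t-\varepsilon)_+/t)^{1/2}$ for $t > 0$ and $g(0)=0$. This $g$ is continuous on $[0,\infty)$, takes values in $[0,1]$, and satisfies $g(t)^2\, t = (t-\varepsilon)_+$. Setting $d := g(a) \in A_+$, the continuous functional calculus produces a positive contraction with $d\,a\,d = (a-\varepsilon)_+$. Since $d$ is a contraction,
\[
\bigl\| d b d - (a-\varepsilon)_+ \bigr\| \;=\; \| d(b-a)d \| \;\leq\; \|b-a\| \;<\; \varepsilon,
\]
so $dbd$ is only approximately equal to $(a-\varepsilon)_+$. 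At the same time, the hypothesis $\|a-b\|<\varepsilon$ implies the operator inequality $b \geq a - \varepsilon\cdot 1$ in the unitization, with strict slack $\delta := \varepsilon - \|a-b\| > 0$.

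The next step is to upgrade this approximate identity to an exact one, exploiting the positive gap $\delta$. One efficient route is to work in the unitization and, for $n\in\N$, form regularized expressions such as $(a-\varepsilon)_+^{1/2}(b+1/n)^{-1/2}$, combine them with $d$, and show that the resulting sequence converges in norm to an element $r\in A$ with $\|r\|\le 1$ and $r b r^* = (a-\varepsilon)_+$. Equivalently, the inequality $b \geq a-\varepsilon$ together with the fact that $(a-\varepsilon)_+$ lies in the hereditary subalgebra generated by $dbd$ permits a Pedersen/Douglas-type factorization of $(a-\varepsilon)_+^{1/2}$ through $b^{1/2}$, and reassembling produces the desired contraction $r$.

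The main obstacle is precisely this upgrade from the approximate bound above to the exact identity $r b r^* = (a-\varepsilon)_+$ with $r \in A$ and $\|r\|\le 1$, rather than merely with $r$ in an enveloping von Neumann algebra. The strict inequality $\|a-b\|<\varepsilon$ enters decisively here: the positive quantity $\delta$ is what makes the regularization converge within $A$ and keeps the resulting element a contraction, which is also the reason the conclusion must be stated for $(a-\varepsilon)_+$ rather than for $a$ itself.
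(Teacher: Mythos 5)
The paper does not prove this lemma at all --- it is quoted verbatim from \cite[Lemma 2.5]{GP24} (Rørdam's lemma), so there is no in-paper argument to compare against; your proposal has to stand on its own, and it does not yet. The opening moves are fine: $g(t)=((t-\varepsilon)_+/t)^{1/2}$ is continuous, $d=g(a)$ is a positive contraction with $dad=(a-\varepsilon)_+$, and $\|dbd-(a-\varepsilon)_+\|\le\|a-b\|<\varepsilon$. But that estimate is a dead end rather than a stepping stone: an error of size $\varepsilon$ is exactly as large as the cut-down you are trying to produce, so the only thing you could extract from it (by invoking the lemma you are trying to prove, or Proposition \ref{epsilon cc}) is a statement about $(a-2\varepsilon)_+$, losing another $\varepsilon$. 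Nothing in the proposal ever converts an approximate identity into the exact one.

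The entire content of the lemma is the step you label ``the main obstacle'' and then only describe: producing a single contraction $r\in A$ with $rbr^*=(a-\varepsilon)_+$ exactly. The sequence $(a-\varepsilon)_+^{1/2}(b+1/n)^{-1/2}$ is bounded but is not shown to converge in norm, and in general it does not; likewise, Douglas factorization of $(a-\varepsilon)_+^{1/2}$ through $b^{1/2}$ at the exponent $1/2$ is exactly the borderline case where Pedersen's factorization theorem fails without extra hypotheses (it holds for exponents strictly below $1/2$). The standard proof uses the slack $\delta=\varepsilon-\|a-b\|$ in a structurally different way: from $b\ge a-\|a-b\|\cdot 1$ one conjugates by a function of $a$ supported in $(\varepsilon-\delta,\infty)$, e.g.\ $f(t)=\big((t-\varepsilon)_+/(t-\|a-b\|)\big)^{1/2}$, to get the operator inequality $f(a)\,b\,f(a)\ge (a-\varepsilon)_+$ with room to spare, and then applies a genuine factorization lemma (if $0\le e\le c$ and $\mu>0$ then $(e-\mu)_+=rcr^*$ for a contraction $r$) whose proof is itself a careful norm-convergence argument. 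Your sketch gestures at these ingredients but supplies neither the correct conjugation (note that $x\le b$ with $x$ self-adjoint does \emph{not} give $x_+\le b$, so the inequality $b\ge a-\varepsilon\cdot 1$ cannot be used on $(a-\varepsilon)_+$ directly) nor the convergence argument, so as written the proof has a genuine gap at its central step.
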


\begin{defn}[{\cite[Definition 2.9]{GP24}}]
    A unital $\mathrm{C}^*$-algebra $A$ is said to be \textit{finite} if for all $s \in A$ with $s^* s=1_A$ we automatically have $s s^*=1_A$. In other words, every isometry in $A$ is a unitary. We say that $A$ is \textit{stably finite} if $M_n(A)$ is finite for all $n \in \mathbb{N}$.
\end{defn}

\begin{defn}[{\cite[Definition 2.13]{GP24}}]
    A unital $\mathrm{C}^*$-algebra $A$ is said to have \textit{stable rank one} if the set $\mathrm{GL}(A)$ of invertible elements in $A$ is dense in $A$. A nonunital $\mathrm{C}^*$-algebra is said to have stable rank one if its minimal unitization does.
\end{defn}

\begin{defn}[{\cite[Definition 3.1]{GP24}}]
    Let $A$ be a $\mathrm{C}^*$-algebra. The \textit{Cuntz semigroup} of $A$ is defined as
\[\mathrm{Cu}(A)=(A \otimes \mathcal{K})_{+} / \sim,\]
where $\sim$ stands for the Cuntz equivalence relation. For a positive element $a \in$ $(A \otimes \mathcal{K})_{+}$, we denote by $[a]$ its Cuntz equivalence class, hence \begin{align*}
    \operatorname{Cu}(A)=\{[a]: a \in \left.(A \otimes \mathcal{K})_{+}\right\}.
\end{align*} There is a natural partial order defined on $\mathrm{Cu}(A)$, namely $[a] \leq[b]$ if $a \precsim b$. The element $0:=[0]$ is the minimum element in $\mathrm{Cu}(A)$. The addition on $\mathrm{Cu}(A)$ is defined by setting
\[[a]+[b]=\left[\left(\begin{array}{ll}
a & 0 \\
0 & b
\end{array}\right)\right].\]
We also denote $\left(\begin{array}{ll}a & 0 \\ 0 & b\end{array}\right)$ by $a \oplus b$.
\end{defn}

\begin{thm}[{\cite[Theorem 3.8]{GP24}}]
    Let $A$ be a C$^*$-algebra. Then every increasing sequence in $\mathrm{Cu}(A)$ has a supremum.
\end{thm}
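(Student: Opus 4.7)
The plan is to construct a positive element $a \in (A \otimes \mathcal{K})_+$ whose Cuntz class realizes the supremum of the given increasing sequence $\left([a_n]\right)_{n \in \mathbb{N}}$. After applying the functional calculus $t \mapsto \min(t, 1)$, I may assume the representatives satisfy $\|a_n\| \leq 1$. Fix a rapidly decreasing sequence $\epsilon_n \downarrow 0$, say $\epsilon_n = 2^{-n}$; the cut-downs $(a_n - \epsilon_n)_+$ will serve as slightly shrunken representatives of $[a_n]$ whose classes exhaust $[a_n]$ as $\epsilon_n \to 0$.

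For each $n$, the relation $[a_n] \leq [a_{n+1}]$ together with Lemma \ref{norm close subequivalence} produces an element $s_n \in A \otimes \mathcal{K}$ with $s_n\, a_{n+1}\, s_n^* = (a_n - \epsilon_n)_+$. I would iterate this to build by induction a sequence $(b_n)$ of positive elements of $A \otimes \mathcal{K}$ having $[b_n] = [(a_n - \epsilon_n)_+]$, all supported in a common separable hereditary subalgebra, and norm-Cauchy with $\|b_{n+1} - b_n\| < 2^{-n}$. The mechanism at each step is to post-compose with a cut-down of the intertwiner and rescale, trading a controlled amount of Cuntz class (which will be recovered as $n \to \infty$) for a quantitative norm estimate. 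Setting $a := \lim_n b_n$, the limit lives in $(A \otimes \mathcal{K})_+$.

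To check $[a] = \sup_n [a_n]$, I would argue in two steps. For the upper-bound property: for any $n$ and $\eta > 0$, pick $m \geq n$ with $\|b_m - a\| < \eta$; chasing the relation $(a_n - \epsilon_n)_+ \precsim b_m$ through Lemma \ref{norm close subequivalence} yields $(a_n - \epsilon_n - \eta)_+ \precsim a$, which upon sending $\epsilon_n, \eta \to 0$ gives $[a_n] \leq [a]$. For the universal property: if $[c]$ is any upper bound in $\mathrm{Cu}(A)$, then every truncation $(a - \delta)_+ \precsim b_n$ (for $n$ large with $\|b_n - a\| < \delta$, again by Lemma \ref{norm close subequivalence}) and $[b_n] \leq [a_n] \leq [c]$; letting $\delta \to 0$ yields $[a] \leq [c]$.

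The principal obstacle lies in the inductive construction of the $b_n$: Cuntz subequivalence carries no a priori quantitative norm control, and the algebraic identities $s_n a_{n+1} s_n^* = (a_n - \epsilon_n)_+$ must be threaded together so that the resulting operators converge in norm rather than merely in Cuntz class. The resolution relies on the flexibility of the cut-down functional calculus $t \mapsto (t - \delta)_+$, which lets one shrink and rescale at each stage without losing Cuntz class in the limit; intuitively, one is engineering a single positive element whose successive spectral cut-downs realize each $[a_n]$ up to a vanishing error.
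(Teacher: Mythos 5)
This statement is quoted background: the paper cites it as \cite[Theorem 3.8]{GP24} (the result goes back to Coward--Elliott--Ivanescu) and supplies no proof of its own, so your attempt can only be measured against the standard literature argument. Your overall architecture does match that argument: normalize, pass to cut-downs $(a_n-\epsilon_n)_+$, build norm-convergent representatives, take the limit $a$, and verify the two halves of the supremum property. The two verification steps at the end are essentially correct routine manipulations with Lemma \ref{norm close subequivalence} and the fact that $b\precsim c$ iff $(b-\delta)_+\precsim c$ for all $\delta>0$.

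The genuine gap is the step you yourself flag as ``the principal obstacle'' and then do not carry out: producing positive elements $b_n$ with $[b_n]=[(a_n-\epsilon_n)_+]$ \emph{and} $\norm{b_{n+1}-b_n}<2^{-n}$. This is not a technicality to be absorbed by ``shrinking and rescaling''; it is the entire content of the theorem. The relation $s_n a_{n+1} s_n^*=(a_n-\epsilon_n)_+$ gives no norm estimate whatsoever between $(a_n-\epsilon_n)_+$ and any representative of $[(a_{n+1}-\epsilon_{n+1})_+]$: a Cuntz class is enormous, and two consecutive classes need not contain representatives that are close in norm until you have actually \emph{built} them inside a common corner. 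The standard resolutions require a further idea that your sketch does not contain: either (i) pass to the Hilbert-module picture, form the inductive limit $E=\varinjlim\,\overline{(a_n-\epsilon_n)_+(A\otimes\mathcal K)}$ along the isometric inclusions furnished by Cuntz subequivalence, and invoke Kasparov's stabilization theorem to write the countably generated module $E$ as $\overline{a(A\otimes\mathcal K)}$ for a single positive $a$; or (ii) upgrade $a_n\precsim a_{n+1}$ to the statement that there are $\delta_n>0$ and $x_n$ with $x_n^*x_n=(a_n-\epsilon_n)_+$ and $x_nx_n^*\in\overline{(a_{n+1}-\delta_{n+1})_+(A\otimes\mathcal K)(a_{n+1}-\delta_{n+1})_+}$, and then use the isomorphism of hereditary subalgebras $\mathrm{Her}(x^*x)\cong\mathrm{Her}(xx^*)$ (together with the stability of $A\otimes\mathcal K$) to transport all stages into a single hereditary subalgebra where the representatives can be arranged to increase, not merely to be Cauchy. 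Without one of these mechanisms the induction does not close, so as written the proposal identifies the right skeleton but omits the load-bearing construction.
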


\begin{defn}[{\cite[Definition 4.1]{GP24}}]
    Let $(S, \leq)$ be an ordered set. We define an additional relation $\ll$ on $S$, called \textit{(sequential) compact containment}, as follows: for $x, y \in S$, we set $x\ll y$ if whenever $\left(z_n\right)_{n \in \mathbb{N}}$ is an increasing sequence in $S$ with supremum $z$ satisfying $y\leq z$, then there exists $n_0 \in \mathbb{N}$ such that $x \leq z_{n_0}$.
    We say that $s \in S$ is compact if $s \ll s$, and noncompact otherwise.
\end{defn}

\begin{rem} \label{rem: either compact}
    Observe, from the definition of compact containment, that
    \begin{enumerate}
        \item If $x\leq y$ and $x \ll x$, then $x\ll y$.
        \item If $x\leq y$ and $y\ll y$, then $x\ll y$.
        \item If $x\ll y$, then $x\leq y$.
    \end{enumerate}
    Therefore, if either $x$ or $y$ is compact, then $x\ll y$ is equivalent to $x\leq y$.
\end{rem}
Given $\epsilon>0$, let $l_{\epsilon}: [0, \infty) \longrightarrow [0,\infty)$ be given by $l_{\epsilon}(s)=\max\{0, s-\epsilon\}$. Given $a\in A_{+}$ we write $(a-\epsilon)_{+}$ for $l_{\epsilon}(a)$. The element $(a-\epsilon)_{+}$ is usually refereed to as the \textit{$\epsilon$ cut-down of $a$}.
\begin{prop}[{\cite[Proposition 4.3]{GP24}}] \label{epsilon cc}
   Let $A$ be a $\mathrm{C}^*$-algebra and let $a, b \in(A \otimes \mathcal{K})_{+}$. Then $[a] \ll[b]$ if and only if there exists $\varepsilon>0$ such that $[a] \leq\left[(b-\varepsilon)_{+}\right]$. In particular, $[a]$ is compact if and only if there exists $\varepsilon>0$ with $a \sim(a-\varepsilon)_{+}$.
\end{prop}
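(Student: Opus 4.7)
The plan for $(\Rightarrow)$ is to test the definition of $\ll$ against the canonical sequence $y_n := (b-1/n)_+$. This sequence satisfies $[y_m] \leq [y_n]$ for $m \leq n$, since $(b-1/m)_+ \leq (b-1/n)_+$ pointwise in the continuous functional calculus, so Proposition \ref{<= for elements in C(X)} (applied in $C^*(b)$) gives $y_m \precsim y_n$. Moreover $\sup_n [y_n] = [b]$: the inequality $\leq$ is immediate, and for the reverse any upper bound $[c]$ satisfies $(b-1/n)_+ \precsim c$ for every $n$; given $\eta>0$, pick $n>1/\eta$ and an element $r$ with $\|rcr^* - (b-1/n)_+\|<\eta$, so the triangle inequality yields $\|rcr^* - b\| < 2\eta$, whence $b \precsim c$. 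Applying the definition of $\ll$ to the sequence $([y_n])$ then produces $n_0$ with $[a] \leq [(b-1/n_0)_+]$, and $\varepsilon := 1/n_0$ is the desired witness.

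For $(\Leftarrow)$, the first move is a simple reduction: it suffices to prove $[(b-\varepsilon)_+] \ll [b]$, for then any increasing $(z_n)$ with $\sup z_n \geq [b]$ produces (by the definition of $\ll$ applied to $[(b-\varepsilon)_+]$) some $n_0$ with $[(b-\varepsilon)_+] \leq z_{n_0}$, whence $[a] \leq z_{n_0}$. The ``in particular'' assertion about compact $[a]$ is then a direct consequence of the main equivalence applied with $a = b$, combined with the trivial $(a-\varepsilon)_+ \precsim a$.

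\textbf{The main obstacle} is proving $[(b-\varepsilon)_+] \ll [b]$. Given an increasing $(z_n)$ in $\mathrm{Cu}(A)$ with $\sup z_n \geq [b]$, take representatives $z_n = [x_n]$ and let $[x] := \sup_n [x_n]$. Then $b \precsim x$, so there is $r$ with $\|rxr^* - b\| < \varepsilon/2$, and Lemma \ref{norm close subequivalence} gives $(b-\varepsilon/2)_+ \precsim rxr^*$. The crux is descending from $rxr^*$ to some individual $rx_{n_0}r^*$: an increasing sequence of Cuntz classes need not correspond to a norm-convergent sequence of representatives, so this step is not automatic. The remedy I have in mind is to first replace the $x_n$ by rapidly increasing representatives $x_n'$ with $[x_n'] = [x_n]$, obtained via a diagonal construction using $\delta$-cutdowns and the existence of suprema in $\mathrm{Cu}(A)$, arranged so that the $x_n'$ admit a genuine norm-level limit $x'$ representing the supremum. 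The norm estimate $\|rxr^* - b\| < \varepsilon/2$ then transfers to $\|rx_n' r^* - b\| < \varepsilon$ for $n$ sufficiently large, and a final appeal to Lemma \ref{norm close subequivalence} yields $(b-\varepsilon)_+ \precsim rx_n' r^* \precsim x_n' \sim x_n$, as needed.
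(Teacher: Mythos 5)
First, note that the paper does not prove this proposition at all: it is imported verbatim from \cite[Proposition 4.3]{GP24} as a preliminary, so there is no in-paper argument to compare yours against. Judged on its own terms, your forward direction is complete and correct: the sequence $y_n=(b-1/n)_+$ is increasing by Proposition \ref{<= for elements in C(X)} applied in $C^*(b)$, your verification that $\sup_n[y_n]=[b]$ via the triangle inequality is sound, and testing $\ll$ against this sequence does produce the required $\varepsilon=1/n_0$. The reduction of the converse to the single claim $[(b-\varepsilon)_+]\ll[b]$ and the derivation of the ``in particular'' clause are also fine.

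The gap is in the one step you flag as the crux. Your remedy rests on the assertion that an increasing sequence $(z_n)=([x_n])$ admits representatives $x_n'\sim x_n$ converging in norm to a representative of $\sup_n z_n$; this is stated as something to be ``obtained via a diagonal construction'' but never carried out, and it is precisely the nontrivial content of the proposition (indeed, it is essentially the proof of the existence of suprema, Theorem 3.8 of \cite{GP24}, which the paper also quotes without proof). Two remarks on making it precise. First, requiring $x_n'\sim x_n$ exactly is more than the standard construction delivers and more than you need: what the Rørdam-type argument actually produces, by iterating Lemma \ref{norm close subequivalence} to get contractions $d_n$ with $d_n x_{n+1} d_n^*=(x_n-\delta_n)_+$ and telescoping, is a norm-Cauchy sequence $c_n$ with merely $c_n\precsim x_n$ whose limit $c$ satisfies $[c]=\sup_n[x_n]$; since your final chain only uses $(b-\varepsilon)_+\precsim rc_{n_0}r^*\precsim c_{n_0}\precsim x_{n_0}$, the weaker statement suffices and you should phrase it that way. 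Second, having built $c$ you must still verify that $[c]$ really is the supremum of the $[x_n]$ (upper bound and leastness), which is where the $\delta$-cutdowns and the choice of the $\delta_n$ do actual work. As written, the proof is a correct and standard outline with its technical heart deferred to an unproven structural claim.
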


\begin{prop}[{\cite[Proposition 4.4]{GP24}}]
    Let $X$ be a compact Hausdorff space and let $a, b \in C(X)_{+}$. Then $[a] \ll[b]$ if and only if $$
\overline{\operatorname{supp}_{\mathrm{o}}(a)} \subseteq \operatorname{supp}_{\mathrm{o}}(b) .
$$
In particular, for $a$ as above, $[a]$ is compact if and only if $\operatorname{supp}_{\mathrm{o}}(a)$ is compact in the lattice of open sets of $X$.
\end{prop}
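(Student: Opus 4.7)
The plan is to deduce both statements from the $\varepsilon$-characterization of compact containment (Proposition \ref{epsilon cc}) combined with the characterization of Cuntz subequivalence in commutative C$^*$-algebras (Proposition \ref{<= for elements in C(X)}).

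By Proposition \ref{epsilon cc}, $[a]\ll[b]$ is equivalent to the existence of $\varepsilon>0$ with $[a]\leq[(b-\varepsilon)_+]$. A direct computation in $C(X)$ shows that $\operatorname{supp}_{\mathrm{o}}((b-\varepsilon)_+) = \{x \in X : b(x) > \varepsilon\}$, so by Proposition \ref{<= for elements in C(X)} the condition $[a]\leq [(b-\varepsilon)_+]$ is equivalent to $\operatorname{supp}_{\mathrm{o}}(a)\subseteq\{b>\varepsilon\}$. The proof then reduces to showing that
\[
\bigl(\exists\,\varepsilon>0:\;\operatorname{supp}_{\mathrm{o}}(a)\subseteq\{b>\varepsilon\}\bigr)\quad\Longleftrightarrow\quad \overline{\operatorname{supp}_{\mathrm{o}}(a)}\subseteq \operatorname{supp}_{\mathrm{o}}(b).
\]

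For the forward direction I would note that $\{b>\varepsilon\}\subseteq\{b\geq\varepsilon\}$, and the latter is closed, so taking closures yields $\overline{\operatorname{supp}_{\mathrm{o}}(a)}\subseteq\{b\geq\varepsilon\}\subseteq\{b>0\}=\operatorname{supp}_{\mathrm{o}}(b)$. For the reverse direction, the set $K:=\overline{\operatorname{supp}_{\mathrm{o}}(a)}$ is a closed subset of the compact space $X$, hence compact. The continuous function $b$ is strictly positive on $K$ by hypothesis, so it attains a positive minimum $m>0$ on $K$. Choosing $\varepsilon:=m/2$ gives $\operatorname{supp}_{\mathrm{o}}(a)\subseteq K\subseteq \{b>\varepsilon\}$, as required. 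The step that uses the hypotheses essentially is the compactness of $K$; without the passage to the closure, the infimum of $b$ on $\operatorname{supp}_{\mathrm{o}}(a)$ could be zero, so this is the one place where the closure on the left is genuinely needed.

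For the ``in particular'' statement, applying the equivalence with $b=a$ yields that $[a]$ is compact if and only if $\overline{\operatorname{supp}_{\mathrm{o}}(a)}\subseteq \operatorname{supp}_{\mathrm{o}}(a)$, that is, $\operatorname{supp}_{\mathrm{o}}(a)$ is closed in $X$. Since $X$ is compact Hausdorff and $\operatorname{supp}_{\mathrm{o}}(a)$ is already open, this is equivalent to $\operatorname{supp}_{\mathrm{o}}(a)$ being clopen, equivalently a compact open subset of $X$, which is the meaning of being a compact element in the lattice of open sets of $X$. I do not anticipate a real obstacle here: once Propositions \ref{<= for elements in C(X)} and \ref{epsilon cc} are in hand, the argument is essentially the observation that continuous functions on compact sets attain their infima.
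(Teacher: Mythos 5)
The paper quotes this proposition from \cite{GP24} without giving its own proof, and your derivation from Proposition \ref{epsilon cc} together with Proposition \ref{<= for elements in C(X)} is correct and is essentially the standard argument for this fact. The two key observations --- that $\operatorname{supp}_{\mathrm{o}}((b-\varepsilon)_+)=\{x\in X : b(x)>\varepsilon\}$, and that $b$ attains a strictly positive minimum on the compact set $\overline{\operatorname{supp}_{\mathrm{o}}(a)}$ --- are exactly where the work lies, and you have handled both, including the reduction of the ``in particular'' clause to $\operatorname{supp}_{\mathrm{o}}(a)$ being clopen.
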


We note that the foregoing Proposition informs the terminology of compact containment.

\begin{defn}[{\cite[Definition 4.5]{GP24}}]
     Let $(S, \leq)$ be a positively ordered monoid. We say that $S$ is an \textit{(abstract) Cuntz semigroup}, or just a \textit{$\mathbf{Cu}$-semigroup}, if it satisfies the following axioms:
     \begin{enumerate}
         \item Every increasing sequence has a supremum.
         \item For every $s \in S$, there is a sequence $\left(s_n\right)_{n \in \mathbb{N}}$ in $S$ with $s_n \ll s_{n+1}$ and $s=\sup _{n \in \mathbb{N}} s_n$.
         \item If $s \ll t$ and $s^{\prime} \ll t^{\prime}$, then $s+s^{\prime} \ll t+t^{\prime}$.
         \item If $\left(s_n\right)_{n \in \mathbb{N}}$ and $\left(t_n\right)_{n \in \mathbb{N}}$ are increasing sequences, then \[\sup _{n \in \mathbb{N}}\left(s_n+t_n\right)=\sup _{n \in \mathbb{N}} s_n+\sup _{n \in \mathbb{N}} t_n .\]
     \end{enumerate}
     
Given $\mathbf{Cu}$-semigroups $S$ and $T$, a \textit{$\mathbf{Cu}$-morphism} between them is a map \mbox{$f: S \rightarrow T$} preserving addition, neutral element, order $\leq$, suprema of increasing sequences, and also the compact containment relation $\ll$. 

We denote by $\mathbf{Cu}$ the category whose objects are $\mathbf{Cu}$-semigroups and whose morphisms are $\mathbf{Cu}$-morphisms. The set of $\mathbf{Cu}$-morphisms between two semigroups $S$ and $T$ will be denoted by $\mathrm{Cu}(S, T)$.
\end{defn}

The following result, due to Coward, Elliott and Ivanescu \cite{CEI08} reconciles the abstract and concrete Cuntz semigroups.

\begin{thm}[{\cite[Theorem 4.6]{GP24}}]
    Let $A$ be a $\mathrm{C}^*$-algebra. Then $\mathrm{Cu}(A)$ is a \mbox{$\mathbf{Cu}$-semigroup}. Moreover, if $\varphi: A \rightarrow B$ is a $*$-homomorphism between $\mathrm{C}^*$-algebras, then $\varphi$ naturally induces a $\mathbf{Cu}$-morphism 
    $\mathrm{Cu}(\varphi): \mathrm{Cu}(A) \rightarrow \mathrm{Cu}(B)$. In other words, Cu is a functor from the category $\mathbf{C}^*$ of $\mathrm{C}^*$-algebras to $\mathbf{Cu}$.
\end{thm}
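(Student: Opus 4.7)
The plan is to verify the four $\mathbf{Cu}$-semigroup axioms for $\mathrm{Cu}(A)$ — axiom (1) is already recorded as a separate theorem in the excerpt and will simply be invoked — and then to check that applying a $*$-homomorphism preserves all of this structure. The key technical tool throughout will be Proposition \ref{epsilon cc}, which characterizes $\ll$ in $\mathrm{Cu}(A)$ via $\varepsilon$-cut-downs and lets me translate semigroup-theoretic assertions about $\ll$ into purely $C^*$-algebraic statements about positive elements.

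For axiom (2), given $s=[a]$ with $a\in(A\otimes\mathcal{K})_+$, I propose the sequence $s_n:=[(a-1/n)_+]$. The functional-calculus identity $((a-1/(n+1))_+-(1/n-1/(n+1)))_+=(a-1/n)_+$ combined with Proposition \ref{epsilon cc} gives $s_n\ll s_{n+1}$, and the equality $s=\sup_n s_n$ reduces, via Lemma \ref{norm close subequivalence}, to the standard fact that $[(a-\varepsilon)_+]\le[c]$ for every $\varepsilon>0$ implies $[a]\le[c]$. For axiom (3), choosing representatives and applying Proposition \ref{epsilon cc} gives $\varepsilon,\varepsilon'>0$ with $a\precsim(b-\varepsilon)_+$ and $a'\precsim(b'-\varepsilon')_+$; the identity $(b\oplus b'-\delta)_+=(b-\delta)_+\oplus(b'-\delta)_+$ with $\delta=\min(\varepsilon,\varepsilon')$ then delivers $s+s'\ll t+t'$. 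Axiom (4) is routine: one inequality is just monotonicity of addition, and for the other direction axiom (2) lets me approximate each supremum from below by elements $u\ll\sup_n s_n$ and $v\ll\sup_n t_n$, which by definition of $\ll$ satisfy $u\le s_n$ and $v\le t_m$ for some $n,m$, so that $u+v\le s_k+t_k$ for $k=\max(n,m)$; taking suprema closes the argument.

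For functoriality, $\varphi:A\to B$ amplifies to $\varphi\otimes\mathrm{id}_{\mathcal{K}}:A\otimes\mathcal{K}\to B\otimes\mathcal{K}$, and the inequality $\|\varphi(r)\varphi(b)\varphi(r)^*-\varphi(a)\|\le\|rbr^*-a\|$ simultaneously shows that $\mathrm{Cu}(\varphi)[a]:=[\varphi(a)]$ is well defined and preserves order, addition, and the zero element. Preservation of $\ll$ is immediate from Proposition \ref{epsilon cc}, since $\varphi((b-\varepsilon)_+)=(\varphi(b)-\varepsilon)_+$. The last and most delicate point is preservation of suprema of increasing sequences: given $(s_n)$ increasing with supremum $s$, the plan is to decompose $s=\sup_j t_j$ with $t_j\ll s$ using axiom (2), absorb each $t_j$ into some $s_{k_j}$ by the definition of $\ll$, apply $\mathrm{Cu}(\varphi)$, and finally take suprema on both sides; the reverse inequality is automatic from order-preservation.

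The main obstacle is really axiom (1), existence of suprema for arbitrary increasing sequences $([a_n])$. The representatives $a_n$ are only related by Cuntz subequivalence, not by norm convergence or pointwise comparison, so concrete assembly of a supremum requires exploiting the stability room in $A\otimes\mathcal{K}$ through a diagonal/amplification construction — replacing each $a_n$ by a translated copy $e_n a_n e_n^*$ along mutually orthogonal projections in $\mathcal{K}$ and extracting a convergent telescoping sum. This is precisely the content of the theorem cited immediately above our statement in the excerpt, so the plan is to invoke it rather than re-derive it.
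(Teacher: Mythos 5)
The paper does not prove this statement at all: it is quoted verbatim from \cite[Theorem 4.6]{GP24} as background, with the original result attributed to Coward, Elliott and Ivanescu \cite{CEI08}. So there is no in-paper argument to compare against; what you have written is, in outline, the standard proof from the literature, and almost all of it checks out. Your verification of axiom (2) via the functional-calculus identity $((a-\tfrac{1}{n+1})_+ - (\tfrac1n - \tfrac{1}{n+1}))_+ = (a-\tfrac1n)_+$ together with Proposition \ref{epsilon cc} is correct; axiom (3) via a common cut-down parameter $\delta=\min(\varepsilon,\varepsilon')$ and the identity $(b\oplus b'-\delta)_+=(b-\delta)_+\oplus(b'-\delta)_+$ is correct; and the functoriality argument, including the preservation of $\ll$ from $\varphi((b-\varepsilon)_+)=(\varphi(b)-\varepsilon)_+$ and of suprema by approximating $[a]$ from below by $[(a-\varepsilon)_+]$, is exactly how this is done. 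Deferring axiom (1) to the separately recorded existence-of-suprema theorem is legitimate, since the paper states that result independently.

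The one soft spot is axiom (4). As written, ``taking suprema closes the argument'' presupposes that $\sup_j u_j + \sup_j v_j = \sup_j(u_j+v_j)$ for your approximating sequences, which is an instance of the very axiom you are proving, so the argument is circular in that form. The standard repair is to work with concrete representatives: write $\sup_n s_n = [c]$ and $\sup_n t_n = [d]$, observe that $[c]+[d]=[c\oplus d]=\sup_{\varepsilon>0}[(c\oplus d-\varepsilon)_+]$ by your axiom (2) argument applied to $c\oplus d$, and then use $(c\oplus d-\varepsilon)_+=(c-\varepsilon)_+\oplus(d-\varepsilon)_+$ with $[(c-\varepsilon)_+]\ll[c]$ and $[(d-\varepsilon)_+]\ll[d]$ to land below some $s_k+t_k$. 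With that substitution your proof is complete and agrees with the published one.
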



\begin{defn}[{\cite[Definition 6.2]{GP24}}]
     Let $A$ be a C$^*$-algebra. A 1-quasitrace is a function $\tau: A \rightarrow \mathbb{C}$ such that
     \begin{enumerate}
         \item $\tau(a+i b)=\tau(a)+i \tau(b)$ for all $a, b \in A_{sa}$;
         \item $\tau$ is linear on commutative subalgebras of $A$;
         \item $\tau\left(a^* a\right)=\tau\left(a a^*\right) \geq 0$ for all $A$.
     \end{enumerate}
     If $A$ is unital, we require $\tau(1)=1$ as well.
     We say that $\tau$ is a quasitrace if it extends to a 1-quasitrace on $M_n(A)$ for all $n \in \mathbb{N}$. We use $\mathrm{QT}(A)$ to denote the space of all quasitraces on $A$.
\end{defn}

\begin{defn}[{\cite[Definition 6.4]{GP24}}]
    Let $A$ be a unital $\mathrm{C}^*$-algebra. Given $\tau \in \mathrm{QT}(A)$, we define its associated dimension function $d_\tau: \mathrm{Cu}(A) \rightarrow[0, \infty]$ by
    \[d_\tau([a])=\lim _{n \rightarrow \infty} \tau\left(a^{\frac{1}{n}}\right)\]
    for all $a \in M_{\infty}(A)_{+}$, and extended to $(A \otimes \mathcal{K})_{+}$by taking suprema. 

\end{defn}
Implicit in the definition is the fact that $d_\tau([a])$ only depends on $[a]$. As such we will use $d_\tau([a])$ and $d_\tau(a)$ interchangeably.

\begin{prop}[{\cite[Proposition 6.6]{GP24}}]
    Let $A$ be a unital C$^*$-algebra. Given $\tau \in \mathrm{QT}(A)$, the map $d_\tau$ is well-defined on $\mathrm{Cu}(A)$. Moreover, we have:
    \begin{enumerate}
        \item $d_\tau(0)=0$;
        \item $d_\tau([1])=1$;
        \item $d_\tau(s+t)=d_\tau(s)+d_\tau(t)$ for all $s, t \in \mathrm{Cu}(A)$;
        \item $d_\tau(s) \leq d_\tau(t)$ whenever $s \leq t$ in $\mathrm{Cu}(A)$;
        \item $d_\tau$ preserves suprema of increasing sequences.
    \end{enumerate}
\end{prop}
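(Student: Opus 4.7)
The plan is to first verify that the defining limit $\lim_{n \to \infty} \tau(a^{1/n})$ exists, then establish the key monotonicity $a \precsim b \Rightarrow d_\tau(a) \leq d_\tau(b)$, from which both well-definedness on $\mathrm{Cu}(A)$ and property (4) follow, and finally to derive (1)--(3) and (5) as short consequences.

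For existence of the limit, given $a \in A_+$ rescale so that $\|a\| \leq 1$; since $t \mapsto t^{1/n}$ is pointwise increasing on $[0,1]$, continuous functional calculus gives that $(a^{1/n})_{n \in \N}$ is an increasing sequence in the commutative C$^*$-subalgebra $C^*(a,1)$. A quasitrace is linear on commutative subalgebras and positive, hence monotone there, so $(\tau(a^{1/n}))_n$ is increasing and bounded above by $\tau(1) = 1$; the limit exists. The extension to $(A \otimes \mathcal{K})_+$ by suprema is unambiguous since $\mathrm{Cu}(A)$ has suprema of increasing sequences.

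For monotonicity under $\precsim$, fix $a \precsim b$ and $\epsilon > 0$, and apply the definition of Cuntz subequivalence together with Lemma \ref{norm close subequivalence} to produce $r \in A$ with $(a-\epsilon)_+ = r b r^*$. To bound $\tau((a-\epsilon)_+^{1/n})$ in terms of values $\tau(b^{1/m})$ one uses the trace identity $\tau(x^*x) = \tau(xx^*)$ (which, since 1-quasitraces need not be cyclic globally, must be realized via the 2-quasitrace extension to $M_2(A)$) applied to suitable factorizations arising from the functional calculus of $rbr^*$, exploiting that $(rbr^*)^{1/n}$ lies in the hereditary subalgebra generated by $b$. A careful approximation argument then yields $d_\tau((a-\epsilon)_+) \leq d_\tau(b)$, and letting $\epsilon \to 0$ using that $d_\tau((a-\epsilon)_+) \nearrow d_\tau(a)$ (immediate from the definition and monotone convergence of $(a-\epsilon)_+^{1/n}$ to $a^{1/n}$) completes the claim. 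Applied to $a \sim b$, this makes $d_\tau$ well-defined on $\mathrm{Cu}(A)$ and yields (4) directly.

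With monotonicity in hand, properties (1)--(3) and (5) follow quickly. (1) is trivial; (2) is $\tau(1^{1/n}) = \tau(1) = 1$; (3) uses $[a]+[b]=[a\oplus b]$ and $(a \oplus b)^{1/n} = a^{1/n} \oplus b^{1/n}$ together with the observation that $a^{1/n} \oplus 0$ and $0 \oplus b^{1/n}$ generate a commutative C$^*$-subalgebra of $M_2(A)$ on which the quasitrace extension is linear, so $\tau((a\oplus b)^{1/n}) = \tau(a^{1/n}) + \tau(b^{1/n})$, and one takes limits. For (5), monotonicity already gives $\sup_n d_\tau(s_n) \leq d_\tau(\sup_n s_n)$; for the reverse, write $\sup_n s_n = [a]$ and apply Proposition \ref{epsilon cc}: for each $\epsilon > 0$, $[(a-\epsilon)_+] \ll [a] \leq \sup_n s_n$, so $[(a-\epsilon)_+] \leq s_{n_0}$ for some $n_0$, giving $d_\tau((a-\epsilon)_+) \leq d_\tau(s_{n_0}) \leq \sup_n d_\tau(s_n)$, and $\epsilon \to 0$ yields the result. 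The main obstacle is the monotonicity step: since quasitraces are only linear on commutative subalgebras, the standard trace cyclicity manipulations used in comparing $\tau((rbr^*)^{1/n})$ to values on $b$ are not immediately available and have to be bootstrapped through the $M_2$-extension.
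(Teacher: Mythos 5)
This proposition is quoted verbatim from \cite[Proposition 6.6]{GP24}; the paper supplies no proof of it, so your attempt can only be compared with the standard argument (essentially Blackadar--Handelman). Your treatment of the existence of the limit, of properties (1), (2), (3) and (5), and your reduction of well-definedness and of (4) to the single implication $a \precsim b \Rightarrow d_\tau(a) \leq d_\tau(b)$ are all correct and are the right architecture.

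That implication, however --- which you yourself flag as the main obstacle --- is not actually carried out: it is compressed into ``the trace identity applied to suitable factorizations'' followed by ``a careful approximation argument,'' and the one concrete assertion you do make there is false. From $(a-\varepsilon)_+ = rbr^*$ it does \emph{not} follow that $(rbr^*)^{1/n}$ lies in the hereditary subalgebra $\overline{bAb}$: take $A=M_2(\C)$, $b=e_{11}$, $r=e_{21}$, so that $rbr^* = e_{22} \notin \overline{bAb} = \C e_{11}$. The hereditary containment only becomes available \emph{after} the flip: with $x = rb^{1/2}$ one has $(a-\varepsilon)_+ = xx^*$, and it is $x^*x = b^{1/2}r^*rb^{1/2}$, together with its functional calculus, that lies in $\overline{bAb}$ and satisfies $x^*x \leq \norm{r}^2 b$. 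Even granting this correction, two nontrivial ingredients are left unaddressed. First, upgrading the axiom $\tau(y^*y)=\tau(yy^*)$ to $d_\tau(x^*x)=d_\tau(xx^*)$ requires $\tau(f(x^*x))=\tau(f(xx^*))$ for $f\in C_0((0,\infty))_+$; since $f(t)=t^{1/n}$ does not factor as $t\,g(t)^2$ with $g$ bounded, one must approximate $f(x^*x)$ in norm by elements $y_k^*y_k$ with $y_k = x\,g_k(x^*x)$ and invoke the norm continuity of quasitraces (the Blackadar--Handelman result \cite{BH81} the paper cites elsewhere). Second, the final comparison $d_\tau(x^*x)\leq d_\tau(b)$ from $x^*x \leq \norm{r}^2 b$ needs monotonicity of $\tau$ (hence of $d_\tau$) with respect to the operator order between non-commuting positive elements, which for quasitraces is again a theorem of Blackadar--Handelman rather than an axiom, and which your outline silently assumes. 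As written, the central step of the proof is missing.
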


\begin{prop}[{\cite[Proposition 6.7]{GP24}}] \label{rank, measure of suppport}
    Let $X$ be a compact Hausdorff space, let $\mu$ be a regular Borel probability measure, and let $\tau: C(X) \rightarrow \mathbb{C}$ be the trace it induces. Given $a \in$ $C(X)_{+}$, we have
\[d_{\tau}([a])=\mu\left(\operatorname{supp}_{\mathrm{o}}(a)\right).\]
\end{prop}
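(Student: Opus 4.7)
The plan is to unpack the definitions on both sides and reduce the identity to a standard convergence theorem. By the Riesz--Markov--Kakutani correspondence recalled above, the trace $\tau$ acts on $C(X)_+$ as $\tau(f) = \int_X f\,d\mu$. Using the defining formula
\[
d_\tau([a]) = \lim_{n\to\infty} \tau(a^{1/n}) = \lim_{n\to\infty} \int_X a(x)^{1/n}\,d\mu(x),
\]
where the powers $a^{1/n}$ are computed via continuous functional calculus and therefore coincide with the pointwise functions $x \mapsto a(x)^{1/n}$, the goal is to identify this limit with $\mu(\operatorname{supp}_{\mathrm{o}}(a))$.

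The next step is to observe the pointwise behavior of the integrand. For $x \in \operatorname{supp}_{\mathrm{o}}(a)$ we have $a(x) > 0$, so $a(x)^{1/n} \to 1$ as $n \to \infty$; for $x \notin \operatorname{supp}_{\mathrm{o}}(a)$ the value $a(x) = 0$ gives $a(x)^{1/n} = 0$ for every $n$. Consequently $a^{1/n} \to \chi_{\operatorname{supp}_{\mathrm{o}}(a)}$ pointwise on $X$. To bring this limit under the integral sign I would either invoke the dominated convergence theorem, using the constant function $\max(1,\|a\|_\infty)$ as a $\mu$-integrable majorant on the compact space $X$, or first replace $a$ by $a/\|a\|_\infty$ (which is Cuntz equivalent to $a$ and has the same open support, so does not affect either side) and then apply the monotone convergence theorem, since for $0 \le a \le 1$ the sequence $a^{1/n}$ is monotone increasing.

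Either convergence theorem yields
\[
\lim_{n\to\infty} \int_X a(x)^{1/n}\,d\mu(x) = \int_X \chi_{\operatorname{supp}_{\mathrm{o}}(a)}(x)\,d\mu(x) = \mu(\operatorname{supp}_{\mathrm{o}}(a)),
\]
which is the desired identity. There is no substantial obstacle here: the argument is a direct computation once one identifies the pointwise limit of $a^{1/n}$ with the indicator of the open support. The only mild care needed is to ensure the convergence theorem applies, which is immediate since $X$ is compact and $a$ is bounded, so any constant majorant is $\mu$-integrable.
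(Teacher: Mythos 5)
Your proof is correct and is the standard argument for this fact (the paper itself only cites this proposition from the reference without reproving it): unpack $d_\tau$ via the limit of $\tau(a^{1/n})$, identify the pointwise limit of $a^{1/n}$ with $\chi_{\operatorname{supp}_{\mathrm{o}}(a)}$, and pass the limit under the integral by dominated (or, after normalizing, monotone) convergence. No gaps.
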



\begin{defn}[{\cite[Definition 8.5]{GP24}}]
    Let $A$ be a simple unital $\mathrm{C}^*$-algebra. We say that $A$ has \textit{strict comparison} (of positive elements by quasitraces) if whenever $a, b \in(A \otimes \mathcal{K})_{+}$ are nonzero and satisfy $d_\tau(a)<d_\tau(b)$ for all $\tau \in \operatorname{QT}(A)$, then $a \precsim b$.
\end{defn}

\begin{prop}[{\cite[Proposition 2.23]{APT11}}] \label{0 accum pt for noncompact}
    Let A be a unital C$^*$-algebra with stable rank one. It follows that $[a]$ is compact in $\mathrm{Cu}(A)$ if and only if $0$ is either an isolated point of $\sigma(a)$ or $0 \notin \sigma(a)$.
\end{prop}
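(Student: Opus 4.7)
The plan is to prove the two directions of the equivalence separately; stable rank one will enter essentially in only one of them. For the ``if'' direction (spectral gap $\Rightarrow$ compactness), if $\sigma(a)\subseteq\{0\}\cup[\varepsilon,\|a\|]$ for some $\varepsilon>0$, I would use continuous functional calculus on $C^*(a)$ to produce $f\in C(\sigma(a))_+$ with $f(0)=0$ and $f(t)=t/(t-\varepsilon/2)$ on $[\varepsilon,\|a\|]$ --- continuous precisely because of the spectral gap. Then $f(a)^{1/2}(a-\varepsilon/2)_+ f(a)^{1/2}=a$, witnessing $a\precsim(a-\varepsilon/2)_+$; combined with the reverse inequality (immediate from Proposition \ref{<= for elements in C(X)}) this gives $a\sim(a-\varepsilon/2)_+$, whence $[a]\ll[a]$ by Proposition \ref{epsilon cc}.

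For the ``only if'' direction, stable rank one must be used essentially --- the statement fails in $B(H)$, where a positive compact operator with spectrum $\{0\}\cup\{1/n:n\in\N\}$ is Cuntz equivalent to the unit of $B(H)$ and hence compact, yet has $0$ as a non-isolated spectral point. Starting from $a\sim(a-\varepsilon)_+$ (Proposition \ref{epsilon cc}), my target is to prove that the hereditary subalgebra $\overline{aAa}$ is unital, since this is equivalent to the desired spectral condition: any unit of $\overline{aAa}$ is a projection $e\in A$ with $ea=a$, forcing $e=\chi_{(0,\|a\|]}(a)$, which lies in $A$ if and only if this characteristic function is continuous on $\sigma(a)$, equivalently, $\{0\}$ is clopen in $\sigma(a)$.

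To bridge from the Cuntz equivalence $a\sim(a-\varepsilon)_+$ to the unitality of $\overline{aAa}$, I would invoke a R{\o}rdam-type lemma available in the stable rank one setting: $a\precsim(a-\varepsilon)_+$ lifts to unitary conjugations $u(a-\delta)_+ u^*\in\overline{(a-\varepsilon)_+ A(a-\varepsilon)_+}$ for unitaries $u$ in the unitization $\widetilde{A}$ and arbitrary $\delta>0$. Together with the Coward--Elliott--Ivanescu identification (again specific to the stable rank one regime) of compact classes in $\mathrm{Cu}(A)$ with Murray--von Neumann classes of projections, this should extract a projection $p\in\overline{aAa}$ with $[p]=[a]$. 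The main obstacle I anticipate is the final step: upgrading the abstract equality $[a]=[p]$ in $\mathrm{Cu}(A)$ to the spatial statement that $p$ actually acts as the unit of $\overline{aAa}$, rather than merely generating an isomorphic hereditary subalgebra inside $A$. Stable rank one earns its keep here once more via cancellation and unitary equivalence of projections; once $p$ is recognized as a unit of $\overline{aAa}$, identifying it with $\chi_{(0,\|a\|]}(a)$ and extracting the spectral gap is immediate.
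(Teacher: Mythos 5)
The paper quotes this proposition from \cite{APT11} without proof, so there is no in-paper argument to compare against; I am assessing your proposal on its own terms. Your ``if'' direction is correct and complete: the spectral gap makes $f$ continuous on $\sigma(a)$, functional calculus gives $f(a)^{1/2}(a-\varepsilon/2)_+f(a)^{1/2}=a$, and together with the trivial $(a-\varepsilon/2)_+\precsim a$ this yields $a\sim(a-\varepsilon/2)_+$, hence compactness by Proposition \ref{epsilon cc}; as you say, stable rank one is not needed there.

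The ``only if'' direction has a genuine gap. The pillar you lean on --- that in the stable rank one setting compact classes in $\mathrm{Cu}(A)$ coincide with Murray--von Neumann classes of projections --- is not an independent fact you can import: in \cite{APT11} it appears as Corollary 2.24, \emph{after} and as a consequence of Proposition 2.23, precisely because the standard way to see that a compact class is a projection class is to first produce the spectral gap and then take $p=\chi_{(0,\infty)}(a)\in C^*(a)$. Invoking it here is circular unless you give an independent derivation (say via the Coward--Elliott--Ivanescu Hilbert-module picture), which you have not sketched; R{\o}rdam's lemma alone only places unitary conjugates of $(a-\delta)_+$ inside $\overline{(a-\varepsilon)_+A(a-\varepsilon)_+}$ and manufactures no projection. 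Your acknowledged ``main obstacle'' is also addressed the wrong way: granting a projection $p$ with $[p]=[a]$, one should not try to force $p$ to be the unit of $\overline{aAa}$; rather, stable rank one makes Cuntz-equivalent positive elements generate isomorphic hereditary subalgebras, so $\overline{aAa}\cong pAp$ is unital, and then any approximate unit such as $(a^{1/n})$ converges in norm to the unit, which therefore lies in $C^*(a)$ and must equal $\chi_{(0,\infty)}(a)$ --- this also fills the step you asserted without proof, that the unit is forced to be that spectral projection. Finally, your $B(H)$ aside is false as stated: for a positive compact operator $c$ of infinite rank, $rcr^*$ is compact for every $r$, so $\|rcr^*-1\|\geq 1$ and $1\not\precsim c$; moreover $[c]$ is \emph{not} compact in $\mathrm{Cu}(B(H))$, since $c\precsim(c-\varepsilon)_+$ would force $\mathrm{rank}(c)\leq\mathrm{rank}((c-\varepsilon)_+)<\infty$. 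A correct witness that stable rank one is essential is $a=1_{H_1}\oplus c$ on $H_1\oplus H_2$ with $H_1$ infinite dimensional: then $a\sim 1\sim (a-\varepsilon)_+$, so $[a]$ is compact, while $0$ is a non-isolated point of $\sigma(a)$.
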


\begin{cor}[{\cite[Corollary 2.24]{APT11}}] \label{substraction in Cu(A)}
    Let A be a unital C$^*$-algebra which is either simple or of stable rank one. Then the Murray-von Neumann semigroup $V(A)$ of $A$ can be identified as a subsemigroup of $\mathrm{Cu}(A)$:
    \[\mathrm{V}(A)=\{x \in \mathrm{Cu}(A) \mid \text{if } x\leq y \text{ for } y \in \mathrm{Cu}(A), \text{ then } x+z=y \text{ for some } z\in  \mathrm{Cu}(A)\}.\]
\end{cor}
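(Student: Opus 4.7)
The plan is to prove the two set inclusions separately, with the characterization of compact elements from Proposition \ref{0 accum pt for noncompact} serving as the linchpin: in our setting, the image of $V(A)$ in $\mathrm{Cu}(A)$ turns out to be precisely the set of compact elements, and compact elements coincide with the elements satisfying the stated decomposition property.

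For the forward inclusion, let $p\in(A\otimes\mathcal{K})_+$ be a projection. Since $(p-\tfrac{1}{2})_+=\tfrac{1}{2}p\sim p$, Proposition \ref{epsilon cc} gives $[p]\ll[p]$, so $[p]$ is compact. Assume $[p]\leq[b]$; combining this with compactness yields $[p]\ll[b]$, so $[p]\leq[(b-\varepsilon)_+]$ for some $\varepsilon>0$ by the same proposition. I would then use that $p\precsim(b-\varepsilon)_+$ with $p$ a projection forces, via polar decomposition of an approximating element together with Lemma \ref{norm close subequivalence}, the existence of a partial isometry $v$ with $vv^*=p$ and $q:=v^*v$ a projection inside $\overline{(b-\varepsilon)_+(A\otimes\mathcal{K})(b-\varepsilon)_+}\subseteq\overline{b(A\otimes\mathcal{K})b}$. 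Since $q$ lies in this hereditary subalgebra, one extracts an orthogonal positive complement $b_0$ with $qb_0=0$ and $[b]=[q]+[b_0]$, an orthogonal decomposition available under either of our standing hypotheses. Setting $z=[b_0]$ and using $[q]=[p]$ gives $[b]=[p]+z$.

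For the reverse inclusion I would proceed in two steps: first show that every $x$ with the decomposition property is compact, and then invoke Proposition \ref{0 accum pt for noncompact} to pick a representative $a$ of $x$ with $0$ isolated in (or absent from) $\sigma(a)$. The function $f\in C(\sigma(a))$ equal to $1$ on $\sigma(a)\setminus\{0\}$ and $0$ at $0$ then produces a projection $p=f(a)$ with $[p]=[a]=x$, placing $x\in V(A)$.

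The compactness step is the main obstacle. My approach would fix $a$ with $[a]=x$, consider the increasing family $\{[(a-\varepsilon)_+]\}_{\varepsilon>0}$ whose supremum is $x$, and apply the decomposition property to each inequality $[(a-\varepsilon)_+]\leq x$ to produce $y_\varepsilon$ with $x=[(a-\varepsilon)_+]+y_\varepsilon$. If $x$ were not compact, Proposition \ref{0 accum pt for noncompact} would force $0$ to be a limit point of $\sigma(a)$, hence $[(a-\varepsilon)_+]<x$ strictly and $y_\varepsilon\neq 0$ for every $\varepsilon>0$. I would then combine the nested identities $x=[(a-\delta)_+]+y_\delta=[(a-\varepsilon)_+]+y_\varepsilon$ for $\delta<\varepsilon$ with axiom (4) of Cu-semigroups and the cancellation of compact summands available under our hypotheses, aiming to extract a chain of nontrivial residuals contradicting $\sup_\varepsilon[(a-\varepsilon)_+]=x$.
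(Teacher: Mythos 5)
The paper itself offers no proof of this corollary; it is imported verbatim from \cite{APT11}, so your argument has to stand on its own. Your forward inclusion is essentially sound: from $p\precsim b$ one does obtain, by the standard functional-calculus/polar-decomposition argument, a projection $q$ Murray--von Neumann equivalent to $p$ inside $\overline{b(A\otimes\mathcal{K})b}$, and the decomposition $[b]=[q]+[(1-q)b(1-q)]$ then holds for \emph{any} projection $q$ in that hereditary subalgebra (no standing hypothesis is needed: both summands lie in $\overline{b(A\otimes\mathcal{K})b}$ and are orthogonal, giving one inequality, while $b$ lies in the hereditary subalgebra generated by $q+(1-q)b(1-q)$ because $bq\in qAq$-land and $b(1-q)$ is supported on the support projection of $(1-q)b(1-q)$, giving the other). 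You should prove or cite that decomposition lemma rather than assert it, but this half is fine.

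The reverse inclusion is where the proposal genuinely breaks. The defining property of the right-hand set lets you subtract $x$ from any element \emph{above} it; you instead apply it to the inequalities $[(a-\varepsilon)_+]\leq x$, i.e., you subtract elements lying \emph{below} $x$ from $x$. Nothing licenses this, since $[(a-\varepsilon)_+]$ is not assumed to have the property, so the residual chain $y_\varepsilon$ never gets constructed. Moreover, the intermediate claim you are aiming for --- that the subtraction property forces compactness --- cannot be salvaged by any argument: the maximal element $\infty\in\mathrm{Cu}(A)$ (the class of a strictly positive element of $A\otimes\mathcal{K}$) satisfies ``$\infty\leq y$ implies $y=\infty=\infty+0$'' vacuously, yet when $A$ is stably finite it is noncompact and is not a projection class; likewise, in $\mathrm{Cu}(C[0,1])\cong\mathrm{Lsc}([0,1],\overline{\N})$ the function equal to $1$ on $[0,\frac{1}{2}]$ and $\infty$ on $(\frac{1}{2},1]$ has the subtraction property without being constant. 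So the hard direction requires both the precise formulation in \cite{APT11} (which handles such infinite elements) and its actual mechanism, namely exhibiting for each noncompact finite $x$ a specific majorant from which $x$ cannot be subtracted, rather than the cancellation-of-residuals scheme you sketch. Finally, your appeal to Proposition \ref{0 accum pt for noncompact} covers only the stable rank one branch of the hypothesis, not the simple one.
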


\begin{prop}[{\cite[Proposition 5.9]{APT11}}] \label{qt: noncompact compact}
    Let A be a simple C$^*$-algebra with strict comparison. Let $[a]$ and $[b]$ be elements in  $\mathrm{Cu}(A)$ such that $[a]$ is noncompact and $[b]$ is compact. If $d_{\tau}(a)\leq d_{\tau}(b)$ for every $\tau \in \mathrm{QT}(A)$, then $a \precsim b$. 
\end{prop}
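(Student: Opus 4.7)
The plan is to reduce the conclusion to a single direct application of strict comparison. For each $\delta>0$, I aim to show $(a-\delta)_+ \precsim b$; since $[a]=\sup_n [(a-1/n)_+]$ in $\mathrm{Cu}(A)$ and suprema of increasing sequences are order-closed in a $\mathbf{Cu}$-semigroup, this yields $[a]\le[b]$, i.e.\ $a\precsim b$. Fix $\delta>0$. To apply strict comparison to the pair $((a-\delta)_+, b)$, I need the strict inequality $d_\tau((a-\delta)_+) < d_\tau(b)$ for every $\tau\in\mathrm{QT}(A)$. The hypothesis supplies only the weak chain $d_\tau((a-\delta)_+)\le d_\tau(a)\le d_\tau(b)$, so the task reduces to promoting the first inequality to a strict one, which is where the noncompactness of $[a]$ must enter.

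The key input is the easy half of Proposition \ref{0 accum pt for noncompact}, which does not require stable rank one: if $\sigma(a)\subset\{0\}\cup[\delta_0,\infty)$ for some $\delta_0>0$, then continuous functional calculus on $\sigma(a)$ produces the projection $p=\chi_{[\delta_0,\infty)}(a)$ together with a continuous $f$ vanishing at $0$ and equal to $x^{-1}$ on $[\delta_0,\infty)$, so $f(a)a=p$ gives $p\precsim a$ and $a\le\|a\|p$ gives $a\precsim p$, forcing $[a]=[p]$ to be compact. The contrapositive shows that noncompactness of $[a]$ makes $0$ a limit point of $\sigma(a)$, so $\sigma(a)\cap(0,\delta)$ is a nonempty open subset of $\sigma(a)$ for every $\delta>0$. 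Simplicity of $A$ makes each $\tau\in\mathrm{QT}(A)$ faithful (its null cone is a two-sided ideal), so restricting $\tau$ to $C^*(1,a)$ produces via Riesz--Markov--Kakutani a faithful regular Borel probability measure $\mu_\tau$ on $\sigma(a)$. Proposition \ref{rank, measure of suppport} then gives
\[
d_\tau(a) - d_\tau((a-\delta)_+) \;=\; \mu_\tau\bigl(\sigma(a)\cap(0,\delta]\bigr) \;>\; 0,
\]
since $\mu_\tau$ assigns positive mass to the nonempty open set $\sigma(a)\cap(0,\delta)$. This is the strict inequality I want, and strict comparison delivers $(a-\delta)_+\precsim b$, after handling the trivial cases $b=0$ (which forces $a=0$ by faithfulness of some $\tau$) and $(a-\delta)_+=0$ (immediate).

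The main obstacle I anticipate is justifying that noncompactness of $[a]$ forces $0$ to be a limit point of $\sigma(a)$ \emph{without} invoking stable rank one, since Proposition \ref{0 accum pt for noncompact} is stated under that hypothesis. This works because only one direction of the equivalence is needed, and that direction rests on the projection-producing functional-calculus argument above, independent of stable rank one. A secondary technical point is the faithfulness of an arbitrary quasitrace on a simple unital C$^*$-algebra, since quasitraces are only assumed linear on commutative subalgebras; one verifies that the null cone $\{x:\tau(x^*x)=0\}$ is a two-sided ideal via a Cauchy--Schwarz-type inequality that quasitraces do satisfy, after which simplicity forces it to be zero.
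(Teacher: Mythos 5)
The paper does not prove this proposition; it is quoted verbatim from \cite[Proposition 5.9]{APT11}, so there is no in-paper argument to compare against. Your proof is correct and essentially reconstructs the standard one. The reduction to showing $(a-\delta)_+\precsim b$ for every $\delta>0$ and passing to the supremum $[a]=\sup_n[(a-1/n)_+]$ is the right skeleton, and the two inputs you isolate are exactly the ones needed: (i) only the easy implication of Proposition \ref{0 accum pt for noncompact} (isolated or absent $0$ forces compactness), which as you say is a functional-calculus argument valid in any C$^*$-algebra, so its contrapositive makes $0$ an accumulation point of $\sigma(a)$ without assuming stable rank one; and (ii) faithfulness of every quasitrace on a simple algebra, which turns the nonempty relatively open set $\sigma(a)\cap(0,\delta)$ into a set of strictly positive $\mu_\tau$-measure and hence yields $d_\tau((a-\delta)_+)<d_\tau(a)\le d_\tau(b)$. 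This is precisely the technique the paper itself deploys in the proof of Proposition \ref{Cu(A) cc}(4)(a)$\Rightarrow$(b) (there with the roles of faithful spectral measures and cut-downs of the noncompact element $b$), so your argument is very much in the spirit of the surrounding text. Two small points worth tightening: since $a$ lives in $(A\otimes\mathcal K)_+$ rather than $A$, the measure $\mu_\tau$ on $\sigma(a)\setminus\{0\}$ is finite rather than a probability measure, and the subtraction $d_\tau(a)-d_\tau((a-\delta)_+)$ is only meaningful once you note $d_\tau(a)\le d_\tau(b)<\infty$, which follows from compactness of $[b]$ (via $[b]\le[(b-\varepsilon)_+]$ and Lemma \ref{cutdown bounded}); alternatively, write the additive identity $d_\tau(a)=d_\tau((a-\delta)_+)+\mu_\tau\bigl(\sigma(a)\cap(0,\delta]\bigr)$ and avoid subtraction altogether. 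It is also worth observing that your argument never uses compactness of $[b]$, so you have in fact proved the stronger statement that a noncompact $[a]$ satisfies $a\precsim b$ whenever $d_\tau(a)\le d_\tau(b)$ for all $\tau$, with $[b]$ arbitrary.
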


\begin{prop}[{\cite[Proposition 5.10]{APT11}}] \label{qt: compact noncompact}
    Let A be a simple C$^*$-algebra with strict comparison. Let $[a]$ and $[b]$ be elements in  $\mathrm{Cu}(A)$ such that $[a]$ is compact and $[b]$ is noncompact. Then, $a \precsim b$ if and only if $ d_{\tau}(a)< d_{\tau}(b)$ for every $\tau \in \mathrm{QT}(A)$.
\end{prop}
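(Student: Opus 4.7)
My plan splits the equivalence into its two halves. The reverse direction is essentially immediate: if $d_\tau(a) < d_\tau(b)$ for every $\tau \in \mathrm{QT}(A)$, then either $a=0$ (so $a \precsim b$ trivially) or both elements are nonzero, in which case the defining hypothesis of strict comparison directly delivers $a \precsim b$. The substantive work lies in the forward direction, where assuming $a \precsim b$ one automatically obtains $d_\tau(a) \leq d_\tau(b)$, and the task is to upgrade this weak inequality to a strict one for every $\tau$.

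For the forward direction I first use compactness of $[a]$ together with $[a] \leq [b]$ to conclude $[a] \ll [b]$ via Remark~\ref{rem: either compact}, and then invoke Proposition~\ref{epsilon cc} to extract $\varepsilon > 0$ with $[a] \leq [(b-\varepsilon)_+]$. Next I establish that $0$ is an accumulation point of $\sigma(b)$: were $0$ either absent from $\sigma(b)$ or an isolated point of it, a cut-down $(b-\delta)_+$ for small $\delta > 0$ would share its open support in $\sigma(b)$ with $b$ itself, and Proposition~\ref{<= for elements in C(X)} applied in $C^*(b)$ would give $b \sim (b-\delta)_+$, whence Proposition~\ref{epsilon cc} would force $[b]$ to be compact, contradicting the hypothesis (under stable rank one this step reduces to Proposition~\ref{0 accum pt for noncompact}). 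Having found some $s_0 \in \sigma(b) \cap (0,\varepsilon/2)$, I pick a continuous bump $e:[0,\infty) \to [0,\infty)$ whose open support is a small neighborhood of $s_0$ inside $(0,\varepsilon/2)$ with $e(s_0) > 0$, so that $e(b) \neq 0$ and $e(b)$ is orthogonal to $(b-\varepsilon)_+$ inside the commutative $C^*$-subalgebra $C^*(b)$.

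Orthogonality now yields $[e(b)] + [(b-\varepsilon)_+] = [e(b) + (b-\varepsilon)_+]$ in $\mathrm{Cu}(A)$, and since the open support of $e(b) + (b-\varepsilon)_+$ in $\sigma(b)$ is contained in $\sigma(b) \setminus \{0\} = \operatorname{supp}_{\mathrm{o}}(b|_{\sigma(b)})$, Proposition~\ref{<= for elements in C(X)} gives $e(b) + (b-\varepsilon)_+ \precsim b$. Chaining these inequalities produces $[a] + [e(b)] \leq [b]$; applying $d_\tau$ and using its additivity yields $d_\tau(a) + d_\tau(e(b)) \leq d_\tau(b)$. Simplicity of $A$ makes every nonzero quasitrace faithful on nonzero positive elements, so $d_\tau(e(b)) > 0$, yielding the required strict inequality. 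The main obstacle is the spectral step forcing $0$ to accumulate in $\sigma(b)$: this is the one place where noncompactness of $[b]$ is used in an essential way, and it is what decouples the argument from the stable rank one context in which the conclusion would be nearly routine.
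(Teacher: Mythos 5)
Your proof is correct. Note that the paper does not prove this statement at all: it is quoted verbatim from \cite{APT11} (Proposition 5.10) as a preliminary, so there is no in-paper argument to compare against. Your reconstruction --- dispatching the reverse direction by strict comparison, and for the forward direction producing a nonzero $e(b)$ orthogonal to $(b-\varepsilon)_+$ with $[a]+[e(b)]\le[b]$ so that faithfulness of quasitraces on the simple algebra forces $d_\tau(a)<d_\tau(b)$ --- is the standard route, and the key spectral step (that noncompactness of $[b]$ forces $0$ to be an accumulation point of $\sigma(b)$, argued directly via open supports in $C^*(b)$ rather than through the stable-rank-one Proposition \ref{0 accum pt for noncompact}) is exactly what makes the argument work at the stated level of generality.
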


We summarize some properties of the C$^*$-algebras under consideration, using results in \cite{Cun78} and \cite{Ror04}.
\begin{thm}
    Let $A$ be a simple, separable, unital and $\mathcal{Z}$-stable C$^*$-algebra. 
    \begin{enumerate}
        \item $A$ is stably finite if and only if $QT(A)\neq \emptyset$.
        \item If $\mathrm{QT}(A) \neq \emptyset$, then $A$ has stable rank one and has strict comparison.
    \end{enumerate}
\end{thm}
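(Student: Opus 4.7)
The plan is to organize the theorem into one elementary implication, which I would verify directly, and three citations to \cite{Cun78} and \cite{Ror04}, whose proofs I would simply quote.

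For the easy direction of (1), namely $\mathrm{QT}(A)\neq\emptyset$ implies stable finiteness, I would pick $\tau\in\mathrm{QT}(A)$ with $\tau(1)=1$ and work in a fixed $M_n(A)$, using the extension $\tau_n$ of $\tau$ with $\tau_n(1_n)=n$. If $s\in M_n(A)$ satisfies $s^{*}s=1_n$, set $p:=1_n-ss^{*}$, a projection orthogonal to $ss^{*}$. From the quasitrace identity $\tau_n(s^{*}s)=\tau_n(ss^{*})$ and the fact that $\tau_n$ is linear on the commutative subalgebra generated by $ss^{*}$, one obtains $\tau_n(p)=\tau_n(1_n)-\tau_n(ss^{*})=0$; equivalently, $d_{\tau_n}(p)=0$ in $\mathrm{Cu}(M_n(A))$. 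Because $A$ is simple, so is $M_n(A)$, and for any nonzero positive $a\in M_n(A)_+$ the Cuntz comparison $1_n\leq k[a]$ for some $k\in\N$ forces $d_{\tau_n}(a)\geq n/k>0$. Thus $d_{\tau_n}$ is faithful, which yields $p=0$, i.e.\ every isometry in $M_n(A)$ is unitary, proving stable finiteness.

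The converse direction of (1), namely stable finiteness implies $\mathrm{QT}(A)\neq\emptyset$, is Cuntz's existence theorem for quasitraces on simple unital stably finite C$^{*}$-algebras \cite{Cun78}; its proof builds a lower semicontinuous dimension function on $\mathrm{Cu}(A)$ via Hahn--Banach separation over finite subobjects and passes back to a quasitrace through the Blackadar--Handelman correspondence. For (2), both statements are due to R\o rdam \cite{Ror04}: stable rank one for simple, separable, unital, stably finite, $\mathcal{Z}$-stable $A$ follows from the tensorial decomposition $\mathcal{Z}\cong\mathcal{Z}\otimes\mathcal{Z}$, which affords the room needed to perturb arbitrary elements into the invertible group; strict comparison is deduced by absorbing the uniform gap $d_\tau(b)-d_\tau(a)\geq\delta>0$ (available by compactness of $\mathrm{QT}(A)$) inside a spare $\mathcal{Z}$-factor and using the known strict comparison of $\mathcal{Z}$ to write down a witnessing sequence for $a\precsim b$.

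The main obstacle here is really just that the two cited theorems of Cuntz and R\o rdam are themselves substantial; in the present exposition I would state them and rely on the originals rather than reprove them. The only argument supplied in our own voice is the elementary forward direction of (1), and its sole subtlety is verifying that a normalized quasitrace on a simple unital C$^{*}$-algebra is faithful on $M_n(A)_+$, which is a standard simplicity argument using the Cuntz semigroup as above.
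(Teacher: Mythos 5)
Your proposal is correct and matches the paper's treatment: the paper offers no proof of this theorem, presenting it purely as a summary of results cited from \cite{Cun78} and \cite{Ror04}, exactly the two sources you quote for the substantial directions. Your extra direct verification that $\mathrm{QT}(A)\neq\emptyset$ implies stable finiteness (via $\tau_n(1_n-ss^{*})=0$ and faithfulness of quasitraces on simple algebras) is sound and is the same faithfulness argument the paper itself uses later in the proof of its Proposition on compact containment in $\mathrm{Cu}(A)$.
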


The following theorem is a special case of Theorem 1.0.1 in \cite{Rob12}, which allows us to lift a $\mathbf{Cu}$-morphism to a $*$-homomorphism.
\begin{thm} \label{Cu-morphism lifts to $*$-homomorphism} Let $A$ be a C$^*$-algebra of stable rank one. 
It follows that for every $\mathbf{Cu}$-morphism $\gamma: \mathrm{Cu}(C[0,1]) \longrightarrow \mathrm{Cu}(A)$ satisfying $\gamma([1_{C[0,1]}]) \leq [1_A]$, there exists a $*$-homomorphism $\Gamma: C[0,1] \longrightarrow A$ such that $\mathrm{Cu}(\Gamma)=\gamma$. Moreover, $\Gamma$ is unique up to approximate unitary equivalence.
\end{thm}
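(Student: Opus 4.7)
Since the statement is declared to be a special case of Robert's classification theorem \cite[Theorem 1.0.1]{Rob12}, the plan is to reduce the proof to an application of that result. The first step is to verify that $C[0,1]$ lies in the class Robert considers, namely inductive limits of direct sums of one-dimensional noncommutative CW complexes; this is immediate because $C[0,1]$ is itself such a building block, corresponding to a single interval with the trivial boundary algebras $\mathbb{C}$ at each endpoint. The normalization hypothesis $\gamma([1_{C[0,1]}]) \leq [1_A]$ is exactly what is needed to ensure that Robert's conclusion, naturally valued in $A \otimes \mathcal{K}$, factors through a hereditary subalgebra cut down by a representative of $\gamma([1_{C[0,1]}])$ and ultimately produces a $*$-homomorphism into $A$ itself.

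To give substance to this application I would sketch the internal structure of Robert's proof in this restricted setting. Write $C[0,1] = \varinjlim_n D_n$ for a sequence of semiprojective sub-C$^*$-algebras --- for instance, recursive subhomogeneous algebras arising from piecewise linear interpolation on increasingly fine partitions of $[0,1]$. For each $n$, the restriction $\gamma_n := \gamma|_{\mathrm{Cu}(D_n)}$ is a $\mathbf{Cu}$-morphism into $\mathrm{Cu}(A)$, and an existence result for such building blocks --- available precisely because $A$ has stable rank one --- yields a $*$-homomorphism $\phi_n : D_n \to A$ with $\mathrm{Cu}(\phi_n) = \gamma_n$. The semiprojectivity of $D_n$ combined with a uniqueness-with-estimates statement for its $*$-homomorphisms into $A$ allows one to replace each $\phi_{n+1}$ inductively by a small unitary conjugate so that $\phi_{n+1}|_{D_n}$ agrees with $\phi_n$ within a prescribed tolerance on a prescribed finite set. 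Choosing the tolerances summable and passing to the limit on a countable dense subset of $\bigcup_n D_n$ produces the desired $\Gamma: C[0,1] \to A$.

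The principal obstacle is the gluing step: one needs the uniqueness-with-estimates input to be sharp enough that the cumulative unitary perturbations have bounded products and converge in the point-norm topology. This is where the stable rank one hypothesis enters in an essential way, as it supplies a plentiful reservoir of well-approximable unitaries in $\widetilde{A}$ to implement these corrections. Once this machinery is in place the uniqueness clause is essentially free: given two lifts $\Gamma, \Gamma'$ with $\mathrm{Cu}(\Gamma) = \mathrm{Cu}(\Gamma') = \gamma$, applying the uniqueness-with-estimates conclusion to the restrictions $\Gamma|_{D_n}$ and $\Gamma'|_{D_n}$ produces unitaries $u_n \in \widetilde{A}$ such that $\|u_n \Gamma(f) u_n^* - \Gamma'(f)\|$ becomes arbitrarily small on any prescribed finite subset of $\bigcup_n D_n$, which upon extension by continuity is exactly approximate unitary equivalence of $\Gamma$ and $\Gamma'$.
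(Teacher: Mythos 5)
Your proposal takes essentially the same route as the paper, which offers no argument for this statement beyond citing it as a special case of Robert's Theorem 1.0.1 in [Rob12]; your observation that $C[0,1]$ is itself a one-dimensional NCCW complex, so that the normalization condition $\gamma([1_{C[0,1]}]) \leq [1_A]$ lets the theorem apply directly with codomain $A$, is exactly the justification implicit in that citation. Your supplementary sketch of Robert's internal machinery is not needed for the reduction and is only loosely faithful to his actual argument (for instance, $C[0,1]$ is already semiprojective and a single building block, so no inductive-limit decomposition into recursive subhomogeneous algebras is required), but this does not affect the correctness of the citation-based proof.
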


Finally, we need two structure theorems for the Cuntz semigroups of the C$^*$-algebras under consideration. The first of these is a special case of Theorem 1.1 in \cite{Rob13}. 
We denote by $\overline{\N}$ the set of natural numbers with 0 and $\infty$ adjoined, and we denote by $\mathrm{Lsc}([0,1], \overline{\N})$ the lower semicontinuous functions from $[0,1]$ to $\overline{\N}$.
\begin{thm} \label{Cu(C[0,1]) iso}
    The map $\theta: \mathrm{Cu}(C[0,1]) \longrightarrow \mathrm{Lsc}([0,1], \overline{\N})$ defined by \[\theta([a])(t)=\mathrm{rank}(a(t))\] for all $a \in (C[0,1] \otimes K)_{+}$ is a $\mathbf{Cu}$-isomorphism.
\end{thm}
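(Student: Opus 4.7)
The plan is to verify directly that $\theta$ is well-defined, is a $\mathbf{Cu}$-morphism, and is a bijection, noting that surjectivity and the passage from pointwise inequality of ranks to a global Cuntz subequivalence in $C[0,1]\otimes\mathcal{K}$ are the non-trivial parts.

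For well-definedness and the $\mathbf{Cu}$-morphism axioms, fix $a \in (C[0,1] \otimes \mathcal{K})_+$ and $t_0 \in [0,1]$: if $\mathrm{rank}(a(t_0)) \geq n$ then there is $\epsilon > 0$ with $\mathrm{rank}((a(t_0) - \epsilon)_+) \geq n$, and since $t \mapsto a(t)$ is norm continuous, $(a(t) - \epsilon/2)_+$ still has rank at least $n$ on a neighborhood of $t_0$; hence $t \mapsto \mathrm{rank}(a(t))$ is lower semicontinuous. Invariance under Cuntz equivalence follows because Cuntz subequivalence in $C[0,1]\otimes\mathcal{K}$ passes, under pointwise evaluation, to Cuntz subequivalence in $\mathcal{K}$, which in turn reduces to comparison of ranks. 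Preservation of $0$, $+$, and $\leq$ now follow from pointwise statements in $\mathcal{K}$; preservation of suprema of increasing sequences follows by applying Proposition \ref{epsilon cc} to $\epsilon$-cut-downs; and preservation of $\ll$ follows from the same proposition together with the corresponding description of $\ll$ in $\mathrm{Lsc}([0,1],\overline{\N})$ (namely, $f \ll g$ iff $f$ is bounded and $f(t) < g(t)$ wherever $g(t) < \infty$).

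For surjectivity I would write any $f \in \mathrm{Lsc}([0,1], \overline{\N})$ as the pointwise supremum of an increasing sequence of lsc step functions $f_n$ with only finitely many finite values, and realize each $f_n$ as $\theta([a_n])$ by assembling continuous paths of finite-rank projections in $\mathcal{K}$ whose ranks match $f_n$, the jump points being handled by damping the extra projection continuously to zero. The supremum $\sup_n [a_n]$ then maps to $f$ by preservation of suprema. For injectivity, suppose $\theta([a]) \leq \theta([b])$ pointwise; given $\epsilon > 0$, the element $(a-\epsilon)_+$ has uniformly bounded finite rank and hence lies in $C([0,1], M_N)$ for some $N$, while at each $t_0$ the pointwise inequality produces a partial isometry $r(t_0) \in \mathcal{K}$ with $r(t_0) b(t_0) r(t_0)^* = (a(t_0) - \epsilon)_+$. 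The goal is to stitch these $r(t_0)$ into a global $r \in C([0,1], \mathcal{K})$ implementing $(a - \epsilon)_+ \precsim b$.

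The main obstacle is exactly this stitching step. It succeeds over $[0,1]$ because the base space is one-dimensional and contractible, so a pointwise family of partial isometries can be lifted to a norm-continuous section by a partition-of-unity argument, possibly after perturbing $(a - \epsilon)_+$ to $(a - \epsilon')_+$ for slightly larger $\epsilon'$. This is exactly the content, in the special case of $C[0,1]$, of Robert's classification for one-dimensional NCCW complexes \cite{Rob13}; the analogous statement genuinely fails over higher-dimensional base spaces, so the one-dimensionality of $[0,1]$ is essential and constitutes the real depth of the theorem.
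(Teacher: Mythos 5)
The paper does not prove this statement at all: Theorem \ref{Cu(C[0,1]) iso} is imported verbatim as ``a special case of Theorem 1.1 in \cite{Rob13}'', so the only thing to compare your attempt against is that citation. Your sketch correctly locates the crux (passing from the pointwise rank inequality to a single continuous $r$ with $rbr^*=(a-\epsilon)_+$), but it does not actually close it. ``A partition-of-unity argument'' is not an argument here: if $r_i$ implements the subequivalence over $U_i$, the patched element $\sum_i \phi_i r_i$ produces uncontrolled cross terms $\phi_i\phi_j\, r_i b r_j^*$, and convex combinations of partial isometries are not partial isometries; making the local solutions compatible on overlaps is exactly where the work lies. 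You then resolve this by invoking Robert's classification for one-dimensional NCCW complexes --- but the $C[0,1]$ instance of that result \emph{is} the theorem being proved, so the decisive step is circular. A genuinely self-contained route for $[0,1]$ would instead use continuous diagonalization of self-adjoint elements of $C([0,1],M_N)$ (available because $[0,1]$ is one-dimensional and contractible), reducing comparison to containment of open supports of ordered eigenvalue functions; if you are not going to do that, the honest proof is the paper's one-line citation.

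There is also a concrete false statement in your $\mathbf{Cu}$-morphism verification: the claimed description of compact containment in $\mathrm{Lsc}([0,1],\overline{\N})$, namely ``$f \ll g$ iff $f$ is bounded and $f(t)<g(t)$ wherever $g(t)<\infty$'', is wrong. The constant function $1$ is a compact element (it is $\theta([1_{C[0,1]}])$, and $1 \ll 1$), yet it fails your criterion. The correct characterization --- existence of a continuous function interpolating between $f$ and $g$, equivalently the local condition of Proposition \ref{Local Cu(C[0,1]) cc} --- is subtle enough that this paper devotes Section \ref{section: cc} (Lemma \ref{strict < on open} and Proposition \ref{Cu(C[0,1]) cc}) to establishing it; your verification that $\theta$ preserves $\ll$ rests on the wrong relation and would need to be redone against that characterization. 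The remaining pieces of your sketch (lower semicontinuity of the rank function, invariance under Cuntz equivalence via pointwise evaluation into $\mathcal{K}$, and surjectivity by realizing lsc step functions as diagonal elements and taking suprema) are fine, though surjectivity via suprema already presupposes the order-embedding you have not proved, since you need $[a_n]\leq[a_{n+1}]$ before you can form $\sup_n[a_n]$.
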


The second isomorphism theorem  was originally obtained in \cite{BPT08} and \cite{BT07} for a version of the Cuntz semigroup based on $\mathrm{M}_\infty(A)$, and was extended in \cite{CEI08} to the version presented below. 

For each element $x=[a] \in \mathrm{Cu}(A)$, we denote by $\widehat{x}: \mathrm{QT}(A) \rightarrow \mathbb{R}_{++}$ the lower semicontinuous function defined by $\widehat{x}(\tau)=d_{\tau}(a)$ for $\tau \in \mathrm{QT}(A)$. We also denote by $\mathrm{LAff}(\mathrm{QT}(A))_{++}$ the semigroup of lower semicontinuous affine functions defined on $\mathrm{QT}(A)$ with values on $(0, \infty]$.

\begin{thm}[{\cite[Theorem 9.7]{GP24}}] \label{Cu(A) iso}
    Let $A$ be a simple, separable, unital, stably finite $\mathcal{Z}$-stable $\mathrm{C}^*$-algebra. Then
\[\mathrm{Cu}(A) \cong \underbrace{\mathrm{V}(A)}_{\text {compacts }} \sqcup  \operatorname{LAff}(\mathrm{QT}(A))_{++},\]
with addition and order defined as follows:

\begin{enumerate}
    \item The addition in $\mathrm{V}(A)$ is the usual addition and in $\operatorname{LAff}(\mathrm{QT}(A))_{++}$is given by pointwise addition of functions. If $x \in \mathrm{~V}(A)$ and $f \in \operatorname{LAff}(\mathrm{QT}(A))_{++}$, then $x+f=\widehat{x}+f \in \operatorname{LAff}(\mathrm{QT}(A))_{++}$.
    \item For $x \in \mathrm{~V}(A)$ and $f \in \operatorname{LAff}(\mathrm{QT}(A))_{++}$, we have
    \begin{enumerate}
        \item $x \leq f$ if $\widehat{x}(\tau)<f(\tau)$ for every $\tau \in \mathrm{QT}(A)$.
        \item $f \leq x$ if $f(\tau) \leq \widehat{x}(\tau)$ for every $\tau \in \mathrm{QT}(A)$.
    \end{enumerate}
\end{enumerate}
\end{thm}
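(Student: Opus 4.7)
The strategy is to construct a map $\Phi : \mathrm{Cu}(A) \to \mathrm{V}(A) \sqcup \mathrm{LAff}(\mathrm{QT}(A))_{++}$ by splitting $\mathrm{Cu}(A)$ along the compact/noncompact dichotomy. For $[a] \ll [a]$, set $\Phi([a]) = [a] \in \mathrm{V}(A)$ using Corollary \ref{substraction in Cu(A)}; for $[a]$ noncompact, set $\Phi([a]) = \widehat{[a]}$. That $\widehat{[a]}$ is lower semicontinuous and affine is standard from the definition of $d_\tau$; strict positivity uses that in a simple C$^*$-algebra with $\mathrm{QT}(A) \neq \emptyset$ every quasitrace is faithful on the Cuntz semigroup, so $d_\tau([a]) > 0$ whenever $[a] \neq 0$.

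For injectivity I would argue as follows. On the $\mathrm{V}(A)$ component, this is the content of Corollary \ref{substraction in Cu(A)}. Suppose $[a], [b]$ are noncompact with $\widehat{[a]} = \widehat{[b]}$. For each $\varepsilon > 0$, Proposition \ref{epsilon cc} gives $[(a-\varepsilon)_+] \ll [a]$, and noncompactness of $[a]$ combined with strict comparison yields the strict inequality $d_\tau((a-\varepsilon)_+) < d_\tau(a) = d_\tau(b)$ for every $\tau \in \mathrm{QT}(A)$; strict comparison then forces $(a-\varepsilon)_+ \precsim b$, and letting $\varepsilon \to 0$ through the supremum axiom yields $[a] \leq [b]$. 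Symmetry finishes the noncompact case, and elements of the two pieces cannot be conflated because the $\mathrm{V}(A)$ image lives in the compact part by definition.

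The genuinely nontrivial step is surjectivity onto $\mathrm{LAff}(\mathrm{QT}(A))_{++}$. Given $f \in \mathrm{LAff}(\mathrm{QT}(A))_{++}$, I would approximate $f$ from below by a strictly increasing sequence $f_n$ of continuous affine functions on $\mathrm{QT}(A)$ with $f_n < f_{n+1} < f$ pointwise and $\sup f_n = f$. The aim is to realize each $f_n$ as $\widehat{[a_n]}$ for a sequence of compact elements $[a_n]$ with $[a_n] \ll [a_{n+1}]$, and then set $[a] = \sup_n [a_n]$; the supremum axiom gives $\widehat{[a]} = \sup \widehat{[a_n]} = f$, and $[a]$ is noncompact unless $f$ is already attained at a compact stage. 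Building the $[a_n]$ requires $\mathcal{Z}$-stability: using $A \cong A \otimes \mathcal{Z}$ one exploits positive elements in $\mathcal{Z}$ whose rank functions exhaust $(0,\infty]$ to convert the continuous affine data $f_n$ into Cuntz classes over $A \otimes \mathcal{K}$. The order and addition clauses are then routine: mixed comparisons between the two components are exactly Propositions \ref{qt: noncompact compact} and \ref{qt: compact noncompact}, and additivity of $\Phi$ on the noncompact part follows from additivity of $d_\tau$.

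The principal obstacle is the surjectivity step. The abstract axioms of a $\mathbf{Cu}$-semigroup do not, by themselves, produce elements with prescribed rank functions; what makes this possible is the regularity afforded by $\mathcal{Z}$-absorption, combined with strict comparison to ensure the constructed class depends only on $f$ up to Cuntz equivalence. One must also be careful that the sequence $[a_n]$ chosen at finite stages is itself compact (so that Proposition \ref{qt: compact noncompact} applies to witness $[a_n] \leq [a_{n+1}]$ from $f_n < f_{n+1}$), since otherwise the compact containment required in axiom (ii) of a $\mathbf{Cu}$-semigroup may fail.
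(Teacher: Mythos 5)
First, a point of comparison: the paper does not prove this statement at all --- it is quoted from the literature (stated as [GP24, Theorem 9.7], with the result originally due to [BPT08] and [BT07] and extended in [CEI08]), so there is no in-paper proof to measure yours against. Your outline does capture the correct overall architecture: split $\mathrm{Cu}(A)$ along the compact/noncompact dichotomy, send compacts to $\mathrm{V}(A)$ via Corollary \ref{substraction in Cu(A)} and noncompacts to their rank functions, use faithfulness of quasitraces plus strict comparison for injectivity on the noncompact part, and use Propositions \ref{qt: noncompact compact} and \ref{qt: compact noncompact} for the mixed order relations. The injectivity argument is essentially sound: for $[a]$ noncompact, $0$ is a non-isolated point of $\sigma(a)$ by Proposition \ref{0 accum pt for noncompact}, so faithfulness of the induced spectral measures gives $d_\tau((a-\varepsilon)_+) < d_\tau(a)$ for every $\tau$, and strict comparison plus taking suprema over $\varepsilon$ finishes.

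The genuine gap is in the surjectivity step, which is the heart of the theorem. You propose to realize each continuous affine approximant $f_n$ as $\widehat{[a_n]}$ with $[a_n]$ \emph{compact}, and simultaneously to build the $a_n$ from positive elements of $\mathcal{Z}$ with non-integer rank. These two requirements are incompatible: compact elements of $\mathrm{Cu}(A)$ are classes of projections (the algebra is stably finite of stable rank one), and their rank functions form the countable set $\widehat{\mathrm{V}(A)}$, which is in general nowhere near dense in $\mathrm{Aff}(\mathrm{QT}(A))$. Worse, the elements you can manufacture from elementary tensors $p \otimes e$ with $e \in \mathcal{Z}_+$ have rank functions lying in the cone $\left\{\sum_i t_i\, \widehat{[p_i]} : t_i \geq 0,\ [p_i] \in \mathrm{V}(A)\right\}$; if $K_0(A) \cong \mathbb{Z}$ while $\mathrm{QT}(A)$ is a nontrivial simplex (such algebras exist), this cone consists only of constant functions, so your construction cannot reach any non-constant element of $\operatorname{LAff}(\mathrm{QT}(A))_{++}$. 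Realizing an arbitrary strictly positive continuous affine function as a rank function is precisely the hard content of [BPT08] and [BT07] and requires a mechanism beyond scaling projections by elements of $\mathcal{Z}$. You also pass silently over the fact that writing a lower semicontinuous affine function on $\mathrm{QT}(A)$ as the supremum of an increasing sequence of continuous affine functions is itself a nontrivial Choquet-simplex result rather than a generality about compact convex sets.
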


\section{Characterization of compact containment} \label{section: cc}
Here we characterize compact containment in $\mathrm{Cu}(C[0,1])$ in terms of the rank functions appearing in Theorem \ref{Cu(C[0,1]) iso}. This will be used to prove that a map constructed in Section \ref{section: proof of main} preserves compact containment and is therefore a \mbox{$\mathbf{Cu}$-morphism}. We recall first the following general topology result, known as the Katětov–Tong Insertion Theorem \cite{Ton52}.
\begin{thm} \label{continuous btw usc and lsc}
    If $X$ is a normal topological space and $f,g: X \longrightarrow \R$ are functions such that $f$ is upper semicontinuous, $g$ is lower semicontinuous and $f\leq g$, then there exists a continuous function $h: X \longrightarrow \R$ such that $f\leq h\leq g$.
\end{thm}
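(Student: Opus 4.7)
I would prove this by an Urysohn-style interpolation, constructing a dense family of open sets indexed by rationals and sandwiched between $\{g \leq r\}$ and $\{f < s\}$.

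First, reduce to the bounded case by composing $f$ and $g$ with an order-preserving homeomorphism $\varphi: \mathbb{R} \to (0,1)$, for example $\varphi(t) = (1+e^{-t})^{-1}$. Semicontinuity and the pointwise inequality $f \leq g$ are preserved; a continuous insertion $\tilde{h}$ between $\varphi \circ f$ and $\varphi \circ g$ yields the required $h := \varphi^{-1}\circ \tilde{h}$. So I may assume $f, g$ take values in $[0,1]$.

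The heart of the argument is the following disjointness observation: for rationals $r < s$ in $[0,1]$, the set $\{g \leq r\}$ is closed by lower semicontinuity of $g$, the set $\{f \geq s\}$ is closed by upper semicontinuity of $f$, and they are disjoint (a common point $x$ would yield $s \leq f(x) \leq g(x) \leq r < s$). Normality of $X$ then produces open sets with closure-containment separating them. Enumerating $\mathbb{Q} \cap [0,1]$ as $(r_n)_{n \geq 1}$, I would inductively build open sets $U_{r_n}$ so that, after completion, the family $(U_r)_{r \in \mathbb{Q} \cap [0,1]}$ satisfies: (a) $\{g \leq r\} \subseteq U_r$ for every rational $r$; (b) $\overline{U_r} \subseteq U_{r'}$ whenever $r < r'$ are rationals; and (c) $\overline{U_r} \subseteq \{f < s_r\}$ for an auxiliary rational $s_r > r$ chosen at the stage where $r$ is placed. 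At stage $n$, let $r_a$ and $r_b$ denote the largest and smallest previously placed rationals below and above $r_n$ (if any), pick $s_{r_n} \in (r_n, \min(r_b, r_n + 2^{-n}))$, and apply normality to the closed set $\overline{U_{r_a}} \cup \{g \leq r_n\}$ and the closed complement of the open set $U_{r_b} \cap \{f < s_{r_n}\}$. The induction hypothesis together with the disjointness observation make these disjoint, yielding the desired $U_{r_n}$.

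Define $h(x) := \inf\{r \in \mathbb{Q} \cap [0,1] : x \in U_r\}$ with the convention $\inf \emptyset = 1$. The bound $h \leq g$ is immediate from (a) by letting $r$ decrease to $g(x)$. Conversely, if $r < f(x)$ with $r$ rational, density supplies rationals $r < r_m < s_{r_m} < f(x)$ with $r_m$ enumerated at some stage $m$; property (c) then excludes $x$ from $\overline{U_{r_m}}$, and (b) gives $U_r \subseteq \overline{U_r} \subseteq U_{r_m}$, so $x \notin U_r$ and $h(x) \geq f(x)$. Continuity of $h$ follows because $\{h < t\}$ is a union of the open sets $U_r$ with $r < t$, and $\{h > t\}$ is a union of the open sets $X \setminus \overline{U_{r'}}$ for rationals $r'$ slightly above $t$, using (b). The main obstacle is the bookkeeping in the inductive step: one must choose the auxiliary rationals $s_{r_n}$ so that the condition ``$\overline{U_r} \subseteq \{f \leq r\}$'' emerges in the limit even though each stage enforces only a single inequality $\overline{U_{r_n}} \subseteq \{f < s_{r_n}\}$, and the shrinking choice $s_{r_n} < r_n + 2^{-n}$ is what makes this work together with the density of rationals.
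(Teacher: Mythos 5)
The paper does not prove this statement: it is quoted as the Kat\v{e}tov--Tong insertion theorem with a citation to Tong, so your proposal is competing with the classical literature rather than with an argument in the text. Your reduction to the bounded case, the disjointness observation, and the final verifications (the two bounds and the continuity of $h$) are all correct \emph{granting} the existence of the family $(U_r)$. The genuine gap is in the inductive construction of that family, and it is not a repairable bookkeeping issue: the inductive step can literally fail. The induction hypothesis gives you $\overline{U_{r_a}} \subseteq \{f < s_{r_a}\}$, but at stage $n$ you need $\overline{U_{r_a}} \subseteq \{f < s_{r_n}\}$, and nothing forces $s_{r_n} \geq s_{r_a}$; on the contrary, the requirement $s_{r_n} < r_n + 2^{-n}$, which you genuinely need for the bound $h \geq f$, drives $s_{r_n}$ down toward $r_n$, so whenever a later rational $r_n$ lands in $(r_a, s_{r_a})$ the two closed sets you hand to normality need not be disjoint. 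Concretely, take $X = [0,1]$ and $f = g = \mathrm{id}$. Any admissible $U_{r_1}$ is an open set containing $[0,r_1]$, hence contains $[0, r_1 + \delta)$ for some $\delta > 0$, so $\overline{U_{r_1}} \supseteq [0, r_1+\delta]$. At a later stage with $r_n \in (r_1, r_1+\delta)$ and $s_{r_n} < r_1+\delta$ (which eventually occurs, since infinitely many rationals of $(r_1, r_1+\delta)$ are enumerated and $2^{-n} \to 0$), you would need $[0, r_1+\delta] \subseteq \overline{U_{r_a}} \subseteq U_{r_n} \subseteq \overline{U_{r_n}} \subseteq \{f < s_{r_n}\} = [0, s_{r_n})$, which is impossible. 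The only escape would be to demand $\overline{U_r} \subseteq \{f \leq r\}$ at every stage, but $\{f \leq r\}$ is not open, so normality cannot insert an open set between the closed set $\{g \leq r\}$ and it. This obstruction is precisely why the insertion theorem is genuinely harder than Urysohn's lemma.

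A correct argument takes a different route. One standard proof: for usc $f \leq$ lsc $g$ with values in $[a,b]$ and any $\varepsilon > 0$, partition $[a,b]$ into steps $t_0 < \cdots < t_N$ of length $< \varepsilon$ and sum Urysohn functions for the disjoint closed pairs $\{g \leq t_j\}$ and $\{f \geq t_{j+1}\}$ to get a continuous $k$ with $f - \varepsilon \leq k \leq g + \varepsilon$; then iterate, applying this approximate insertion to the pair $\bigl(\max(f, h_n - 2^{-n}), \min(g, h_n + 2^{-n})\bigr)$ so as to produce a uniformly Cauchy sequence $(h_n)$ whose limit satisfies $f \leq h \leq g$ exactly. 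A further warning sign that iterated normality alone cannot drive your induction: the strict version of the insertion theorem ($f < h < g$ whenever $f < g$) characterizes spaces that are normal \emph{and countably paracompact} (Dowker, Kat\v{e}tov), so any purely Urysohn-style scheme must somewhere exploit the non-strict inequalities in an essential, non-local way.
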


\begin{lem} \label{cutdown bounded}
    Let $A$ be a C*-algebra. For any $[b]\in \mathrm{Cu}(A)$ and $\epsilon>0$, the function \[t \mapsto \mathrm{rank}((b(t)-\epsilon)_{+})\] is bounded.
    In particular, if $[a]$ and $[b]$ are elements in $\mathrm{Cu}(A)$ such that $[a] \ll [b]$, then the function $t \mapsto \mathrm{rank}((a(t))$ is bounded.
\end{lem}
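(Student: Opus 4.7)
The plan is to combine norm-compactness of the image $b([0,1])$ in $\mathcal{K}_+$ with the Weyl perturbation inequality for eigenvalues of self-adjoint compact operators. Although the statement begins ``Let $A$ be a C*-algebra,'' the formula $b(t)$ forces the intended reading $b\in(C[0,1]\otimes\mathcal{K})_+$, i.e.\ $b$ is a norm-continuous map $[0,1]\to\mathcal{K}_+$, so that $b([0,1])$ is compact, hence totally bounded, in $\mathcal{K}_+$.

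For the first statement I would proceed in two steps. Pointwise finiteness is immediate: each $b(t)$ is a positive compact operator whose nonzero eigenvalues $\mu_1(t)\geq\mu_2(t)\geq\cdots$ descend to $0$, so $\mathrm{rank}((b(t)-\epsilon)_+)=\#\{k:\mu_k(t)>\epsilon\}$ is finite. The nontrivial step is the uniform bound, where total boundedness enters: choose a finite $(\epsilon/2)$-net $\{c_1,\dots,c_N\}\subseteq b([0,1])$, and for each $t$ pick $i(t)$ with $\|b(t)-c_{i(t)}\|<\epsilon/2$. Weyl's inequality $|\mu_k(x)-\mu_k(y)|\leq\|x-y\|$ for self-adjoint compact operators then yields $\mu_k(b(t))>\epsilon\Rightarrow\mu_k(c_{i(t)})>\epsilon/2$, so
\[\mathrm{rank}((b(t)-\epsilon)_+)\leq\mathrm{rank}((c_{i(t)}-\epsilon/2)_+)\leq\max_{1\leq i\leq N}\mathrm{rank}((c_i-\epsilon/2)_+),\]
a finite uniform bound.

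For the ``in particular'' clause, Proposition \ref{epsilon cc} supplies some $\epsilon_0>0$ with $[a]\leq[(b-\epsilon_0)_+]$. Theorem \ref{Cu(C[0,1]) iso} identifies $\mathrm{Cu}(C[0,1])$ with $\mathrm{Lsc}([0,1],\overline{\N})$ as ordered monoids via pointwise rank, so this inequality translates to $\mathrm{rank}(a(t))\leq\mathrm{rank}((b(t)-\epsilon_0)_+)$ for every $t$, and the first part bounds the right-hand side.

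The only real friction is the uniform bound step: one needs total boundedness together with the min-max characterization of eigenvalues, both standard for self-adjoint compact operators, so no serious obstacle arises. An alternative route sidesteps Weyl entirely by using norm-compactness to produce a single finite-rank projection $P$ with $\|b(t)-Pb(t)P\|<\epsilon/2$ uniformly in $t$, then invoking Lemma \ref{norm close subequivalence} to conclude $(b(t)-\epsilon)_+\precsim Pb(t)P$ and reading off rank via $\mathrm{Cu}(\mathcal{K})=\overline{\N}$; I prefer the Weyl version for its transparency.
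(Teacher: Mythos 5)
Your proof is correct, and your primary (Weyl) route is genuinely different from the paper's. The paper argues globally: since $C[0,1]\otimes\mathcal{K}$ is the inductive limit of the algebras $C[0,1]\otimes M_n(\C)$, one picks a single positive $c\in C[0,1]\otimes M_N(\C)$ with $\norm{b-c}<\epsilon$ in the sup norm, applies Lemma \ref{norm close subequivalence} to get $(b-\epsilon)_+\precsim c$ in $C[0,1]\otimes\mathcal{K}$, and reads off $\mathrm{rank}((b(t)-\epsilon)_+)\leq\mathrm{rank}(c(t))\leq N$ from Theorem \ref{Cu(C[0,1]) iso}; this is essentially your ``alternative route'' with the finite-rank compression. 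Your main argument instead works pointwise in $\mathcal{K}_+$: total boundedness of $b([0,1])$ gives a finite $(\epsilon/2)$-net, and the Weyl inequality $|\mu_k(x)-\mu_k(y)|\leq\norm{x-y}$ converts proximity in norm into control of the eigenvalue counting function, yielding the uniform bound $\max_i\mathrm{rank}((c_i-\epsilon/2)_+)$. What the paper's route buys is brevity and uniformity with the rest of the paper (everything is phrased through Cuntz subequivalence and the rank isomorphism, and the bound $N$ comes for free from the matrix size); what your route buys is independence from the Cuntz-semigroup machinery --- it needs only elementary spectral theory of compact operators and would survive in settings where one does not want to invoke Theorem \ref{Cu(C[0,1]) iso} for the first half of the statement. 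Your handling of the ``in particular'' clause via Proposition \ref{epsilon cc} and the rank isomorphism is identical to the paper's. One cosmetic remark: both you and the paper silently read the hypothesis ``Let $A$ be a C*-algebra'' as $A=C[0,1]$, which is clearly what is intended given the appearance of $b(t)$.
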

\begin{proof}  
    We know $C[0,1] \otimes \mathcal{K}$ is the inductive limit of $C[0,1] \otimes M_n(\C)$ under the inclusion map, meaning that, given $b \in C[0,1] \otimes \mathcal{K}$ and $\epsilon>0$, there exist $N \in \N$ and $c\in C[0,1] \otimes M_N(\C)$ such that $\norm{b-c}<\epsilon$. 
    In fact, if $b$ is positive, we can choose $c$ to be positive, too. By Lemma \ref{norm close subequivalence}, if $\norm{b-c}<\epsilon$ and $b,c\geq0$, then $(b-\epsilon)_{+} \precsim c$. 
    By Theorem \ref{Cu(C[0,1]) iso}, we have \[\mathrm{rank}\left(\left(b(t)-\epsilon\right)_{+}\right) \leq \mathrm{rank}(c(t))\leq N, \ \forall t \in [0,1]\] since each $c(t)$ is an $N \times N$ matrix.
    
    In particular, if $[a]\ll[b]$ then Proposition \ref{epsilon cc} says there exists $\epsilon>0$ such that $a \precsim (b-\epsilon)_+$, and Theorem \ref{Cu(C[0,1]) iso} gives $\mathrm{rank}(a(t)) \leq \mathrm{rank}\left(\left(b(t)-\epsilon\right)_{+}\right) \leq N$.
\end{proof}
\begin{lem} \label{strict < on open}
    Let $ x$ and $y$ be elements in $\mathrm{Cu}(C[0,1])$, and let $f_x$ and $f_y$ be the rank functions in $\mathrm{Lsc}([0,1], \overline{\N})$ associated with x and y, respectively, (meaning that if $x=[a]$ then $f_x(t)=\theta([a])(t)=\mathrm{rank}(a(t))$. 
    \begin{enumerate}
        \item If $x \ll y$, then there exists a continuous function \[g: [0,1] \longrightarrow [0,\infty)\] such that $f_x(t) \leq g(t) \leq f_y(t)$ for all $t \in [0,1]$.
        \item If, moreover, $y$ is noncompact, then there exists a nonempty open interval $U \subset [0,1]$ such that $f_x(t) \leq g(t) < f_y(t)$ for all $t \in U$.
    \end{enumerate}
\end{lem}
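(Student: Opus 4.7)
The plan is to prove (1) via a sandwich argument using the Katětov--Tong Insertion Theorem (Theorem \ref{continuous btw usc and lsc}), then obtain (2) by a topological argument on the level sets of $f_y$. For (1), I would fix a representative $b \in (C[0,1] \otimes \mathcal{K})_+$ of $y$. Since $x \ll y$, Proposition \ref{epsilon cc} supplies $\varepsilon > 0$ with $x \leq [(b-\varepsilon)_+]$, so $f_x(t) \leq \mathrm{rank}((b(t)-\varepsilon)_+)$ pointwise. The central construction is
\[
\phi_\varepsilon(t) := \#\{\text{eigenvalues of } b(t) \text{ that are } \geq \varepsilon, \text{ counted with multiplicity}\},
\]
which satisfies $\mathrm{rank}((b(t)-\varepsilon)_+) \leq \phi_\varepsilon(t) \leq \mathrm{rank}(b(t)) = f_y(t)$, sandwiching it between $f_x$ and $f_y$. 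Weyl's perturbation inequality, $|\lambda_j(c) - \lambda_j(c')| \leq \|c - c'\|$ for the $j$-th largest eigenvalues of positive compact operators, implies that $\phi_\varepsilon$ is upper semicontinuous: if $\phi_\varepsilon(t_0) = k_0$ then $\lambda_{k_0+1}(b(t_0)) < \varepsilon$ and the same strict inequality persists in a neighborhood. A finite-matrix approximation of $b$ within $\varepsilon/2$, as in Lemma \ref{cutdown bounded}, produces a uniform bound $\phi_\varepsilon \leq N$ for some $N \in \mathbb{N}$. Applying Theorem \ref{continuous btw usc and lsc} to the upper semicontinuous $\phi_\varepsilon$ and the lower semicontinuous real-valued function $\min(f_y, N+1)$ yields a continuous $g : [0,1] \to \mathbb{R}$ with $\phi_\varepsilon \leq g \leq \min(f_y, N+1) \leq f_y$, and $g \geq \phi_\varepsilon \geq 0$ gives the required range.

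For (2), the first step is to identify noncompactness of $y$ with non-constancy of $f_y$: Corollary \ref{substraction in Cu(A)} and the observation that projections in $C[0,1] \otimes \mathcal{K}$ have constant rank (by connectedness of $[0,1]$) together imply that the compact elements of $\mathrm{Cu}(C[0,1])$ correspond to the finite constant rank functions. So if $y$ is noncompact, either $f_y \equiv \infty$, in which case any nonempty open interval works since $g$ is bounded, or $k := \min f_y$ is finite and attained (as $f_y$ is lsc on the compact space $[0,1]$). In the latter case $\{f_y = k\} = \{f_y \leq k\}$ is a proper nonempty closed subset of the connected space $[0,1]$, hence has a boundary point $t_0$. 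Then $g(t_0) \leq f_y(t_0) = k$, and continuity of $g$ gives a neighborhood $W$ of $t_0$ with $g < k + 1/2$ on $W$. Combined with the open set $V_k := \{f_y \geq k+1\}$ (open since $f_y$ is integer-valued and lsc) and the inclusion $t_0 \in \overline{V_k}$, the intersection $W \cap V_k$ is a nonempty open set on which $f_x \leq g < k + 1 \leq f_y$. Taking any connected component supplies the desired open interval $U$.

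The main obstacle is confirming the two analytic properties of $\phi_\varepsilon$ on which the argument hinges, namely upper semicontinuity (via Weyl's inequality) and a uniform bound (via the finite-matrix approximation from Lemma \ref{cutdown bounded}). Everything else is either an application of Katětov--Tong or an elementary boundary argument on $[0,1]$.
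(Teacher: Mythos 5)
Your argument is correct. For part (1) it is essentially the paper's proof: your $\phi_\varepsilon$ is exactly the paper's $r_\epsilon$ (the count of eigenvalues $\geq \varepsilon$), sandwiched as $f_x \leq \mathrm{rank}((b-\varepsilon)_+) \leq \phi_\varepsilon \leq f_y$ and fed into the Kat\v{e}tov--Tong theorem; the only cosmetic differences are that you prove upper semicontinuity of $\phi_\varepsilon$ by Weyl's perturbation inequality rather than by writing $r_\epsilon = N - \mathrm{rank}(s(b(t)))$ for a cut-off function $s$, and you take $\min(f_y, N+1)$ as the lower semicontinuous majorant where the paper uses $\mathrm{rank}((b-\varepsilon/2)_+)$; both choices are finite-valued and lsc, so either works. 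For part (2) your route genuinely diverges, and is simpler. The paper also starts from a boundary point $t_1$ of the minimal level set $E_n = \{f_y = n\}$, but then runs a spectral perturbation argument to show $\mathrm{rank}((b-\varepsilon)_+) < f_y$ on an open set $V$, followed by a three-case analysis in which $g$ may have to be modified by a bump function $\varphi$ to produce the strict inequality. You instead observe that the $g$ already built in part (1) satisfies $g(t_0) \leq f_y(t_0) = k$ at the boundary point, so continuity of $g$ plus the integer jump $f_y \geq k+1$ on the open set $\{f_y > k\}$ immediately gives $f_x \leq g < f_y$ on a nonempty open set, with no modification of $g$ and no case analysis (the $f_y \equiv \infty$ case being trivial since $g$ is bounded). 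This buys you a strictly cleaner statement --- the same $g$ from part (1) witnesses part (2), which is literally what the lemma asserts, whereas the paper's proof in some cases produces a different function $\tilde{g}$ --- at the cost of nothing; your identification of the compact elements of $\mathrm{Cu}(C[0,1])$ with the finite constant rank functions, needed to know $\{f_y = k\}$ is proper, is justified exactly as you say via Corollary \ref{substraction in Cu(A)}, Proposition \ref{0 accum pt for noncompact} and connectedness of $[0,1]$.
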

\begin{proof}
    \textbf{(1)}. Let $x=[a]$ and $y=[b]$. Proposition \ref{epsilon cc} says if $[a]\ll[b]$ then there exists $\epsilon>0$ such that $a \precsim (b-\epsilon)_+$, and it is also clear that $(b-\epsilon)_+ \precsim (b-\frac{\epsilon}{2})_+$. Notice that $(b-\epsilon)_{+}(t)=(b(t)-\epsilon)_{+}$, so by Theorem \ref{Cu(C[0,1]) iso} and Lemma \ref{cutdown bounded}, there exists $N \in \N$ such that   
    \[
    \mathrm{rank}(a(t)) \leq \mathrm{rank}((b(t)-\epsilon)_{+})\leq \mathrm{rank}\left(\left(b(t)-\frac{\epsilon}{2}\right)_{+}\right) \leq N, \ \forall t \in [0,1].
    \] 
    In fact, all we need is the following weaker version of Lemma \ref{cutdown bounded} that says
    \[
    \mathrm{rank}\left( \left(b(t)-\frac{\epsilon}{2} \right)_{+} \right) < \infty, \ \forall t \in [0,1],
    \] 
    which follows from the spectral theorem for compact normal operators.
    
    Define $r_\epsilon(t)$ to be the number of eigenvalues of $b(t)$ that are greater than or equal to $\epsilon$. Notice that $r_\epsilon(t)=N-\mathrm{rank}(s(b(t)))$ where $s: [0,1] \longrightarrow [0,\infty)$ is a continuous function whose open support is $[0,\epsilon)$.
    Since $\mathrm{rank}(s(b(t)))$ is lower semicontinuous, $r_\epsilon$ is upper semicontinuous. Counting eigenvalues with multiplicity we have the following statements: 
    \begin{align*}
        \mathrm{rank}((b(t)-\epsilon)_{+})&=\text{number of eigenvalues of $b(t)$ that are} >\epsilon \\
        r_\epsilon(t)&=\text{number of eigenvalues of $b(t)$ that are} \geq \epsilon, \\
        \text{and } \mathrm{rank}\left(\left(b(t)-\frac{\epsilon}{2}\right)_{+}\right)&=\text{number of eigenvalues of $b(t)$ that are} > \frac{\epsilon}{2}.
    \end{align*} 
    Therefore, for each $t \in [0,1]$, we have
    \begin{align*}
        \mathrm{rank}(a(t)) \leq \mathrm{rank}((b(t)-\epsilon)_{+}) \leq r_\epsilon(t) \leq \mathrm{rank}\left(\left(b(t)-\frac{\epsilon}{2}\right)_{+}\right) \leq \mathrm{rank}(b(t)).
    \end{align*}
    Since $\mathrm{rank}\left( \left(b(t)-\frac{\epsilon}{2} \right)_{+} \right) < \infty, \ \forall t \in [0,1]$, we can apply Theorem \ref{continuous btw usc and lsc} to get a continuous function $g: [0,1] \longrightarrow[0,\infty)$ such that \[r_\epsilon(t) \leq g(t) \leq \mathrm{rank}\left(\left(b(t)-\frac{\epsilon}{2}\right)_{+}\right), \ \forall t\in[0,1].\] Clearly, $\mathrm{rank}(a(t)) \leq g(t) \leq \mathrm{rank}(b(t)) \ \forall t\in[0,1]$, as required.

    \vspace{2mm}
    \noindent
    \textbf{(2)}. Now suppose $y$ is noncompact.
    
    \vspace{2mm}
    \noindent
    \textbf{Step 1}: Show there exists a nonempty open interval $V \subset [0,1]$ such that for all $t \in[0,1]$, we have $\mathrm{rank}(a(t)) < \mathrm{rank}(b(t))$.

    \vspace{2mm}
    \noindent
    \textbf{Case 1}: If $\mathrm{rank}(b(t))=\infty$ for all $t \in [0,1]$, then by Lemma \ref{cutdown bounded} there exists $N \in \N$ such that $\mathrm{rank}(a(t)) \leq N$ for all $t \in [0,1]$, so $V=[0,1]$ in this case.

    \vspace{2mm}
    \noindent
    \textbf{Case 2}: Suppose $\mathrm{rank}(b(t))<\infty$ for some $t \in [0,1]$.  
    We know from Proposition \ref{epsilon cc} and Theorem \ref{Cu(C[0,1]) iso} that $[a] \ll [b]$ implies there exists some $\epsilon>0$ such that \mbox{$\mathrm{rank}(a(t)) \leq \mathrm{rank}((b-\epsilon)_{+}(t))$} and hence $\mathrm{rank}(a(t)) < \mathrm{rank}(b(t))$ for all $t\in V$. 
    It suffices to show, for all $\epsilon >0$, there exists a nonempty open subset $V$ of $[0,1]$ such that $\mathrm{rank}((b-\epsilon)_{+}(t))<\mathrm{rank}(b(t))$ for all $t \in V$.
    
    Let $\epsilon>0$ and $n \in \N$ be the lowest possible rank of $b(t)$. 
    Define \[E_n=\{t \in [0,1] \mid \mathrm{rank}(b(t))=n\}.\] 
    Since $y$ is noncompact, $f_y$ is not constant and therefore $E_n$ is not the entire closed interval $[0,1]$ and so $[0,1]-E_n$ is nonempty. We also observe that \[[0,1]-E_n=\{t \in [0,1] \mid \mathrm{rank}(b(t))>n\}\] is open, so $E_n$ is closed. If we pick a boundary point $t_1 \in \partial E_n$, then $t_1 \in E_n$ too, and so $\mathrm{rank}(b(t_1))=n$. 
    Say the spectrum $\sigma(b(t_1) )$ of $b(t_1)$ is a subset of $\{0, \lambda_1, \lambda_2, \cdots, \lambda_n\}$, where $0 < \lambda_1 \leq \lambda_2 \leq \cdots \leq \lambda_n$. (We used subset instead of equal as 0 may or may not be in the spectrum of $b(t_1)$.)
    By continuity of $b(t)$, given $\alpha \in (0,\epsilon)$, there exists a $\delta>0$ such that $|t-t_1|<\delta$ implies $\norm{b(t)-b(t_1)}<\alpha$. 
    Then $\sigma(b(t))$ is at most ``$\alpha$-away" from $\sigma(b(t_1))$, meaning that \[\sigma(b(t)) \subset (\cup_{i=1}^{n}(\lambda_i -\alpha, \lambda_i+\alpha)) \cup [0,\alpha).\] 
    Recall that $(b-\epsilon)_{+}(t)=(b(t)-\epsilon)_{+}$ is defined to be $l_{\epsilon}(b(t))$ via continuous functional calculus, where $l_{\epsilon}: [0,\infty) \longrightarrow [0, \infty)$ is defined by \mbox{$l_{\epsilon}(s)=\max\{0, s-\epsilon\}$}.
    By the spectral mapping theorem, \[\sigma((b-\epsilon)_{+}(t))=\sigma(l_{\epsilon}(b(t)))=l_{\epsilon}(\sigma(b(t))) \subset \cup_{i=1}^{n}(\lambda_i-\epsilon-\alpha, \lambda_i-\epsilon+\alpha).\] 
    Since $\alpha \in (0, \epsilon)$ can be arbitrarily small, this shows 
    $\mathrm{rank}((b-\epsilon)_{+}(t))\leq n$. 
    Now we take $W=(t_1 -\delta, t_1+\delta)$ and $V=W \cap ([0,1]-E_n)$. Since $E_n$ is closed, $V$ is open. $V$ is also nonempty, because $W$ and $[0,1]-E_n$ are both nonempty and $t_1 \in \partial E_n$ means every neighborhood of $t_1$ should intersect both $E_n$ and $[0,1]-E_n$. Since $V \subset [0,1]-E_n$, this shows $\mathrm{rank}((b-\epsilon)_{+}(t)) < n$ for all $t \in V$, and hence $\mathrm{rank}((b-\epsilon)_{+}(t))<\mathrm{rank}(b(t))$ for all $t \in V$. Without loss of generality, we can assume $V$ to be an open interval $(c,d)$.

    \vspace{2mm}
    \noindent
    \textbf{Step 2}: Given the open interval $V$ constructed in Step 1 and the continuous function $g$ on $[0,1]$ constructed in part (1) of the lemma, we show there exists an open interval $U \subset [0,1]$ and a continuous function $\Tilde{g}: [0,1] \longrightarrow \R$ such that $f_x(t) \leq \Tilde{g}(t) < f_y(t)$ for all $t \in U$.

    \vspace{2mm}
    \noindent
    \textbf{Case 1}: Suppose $g$ is identical to $f_x$ on $V$. Let $\varphi$ be a bump function on $V$ bounded by $0$ and $\frac{1}{2}$, that is, $\varphi \in C^{\infty}_{c}([0,1])$, $\operatorname{supp}_{\mathrm{o}}(\varphi)=V=(c,d)$ and \mbox{$0\leq\varphi(t)\leq \frac{1}{2}$} for all $t \in [0,1]$, take $\Tilde{g}=g+\varphi$, then $f_x<\Tilde{g}<f_y$ on $V$. Notice that the strict inequalities come from the fact that $0\leq\varphi\leq \frac{1}{2}$ and $f_y-f_x\geq 1$ on $V$, just take $U$ to be $V$ in this case.

    \vspace{2mm}
    \noindent
    \textbf{Case 2}: Suppose $g$ is identical to $f_y$ on $V$. Using the same $\varphi$ as in Case 1 to define $\Tilde{g}=g-\varphi$, we have $f_x<\Tilde{g}<f_y$ on $V$ similarly, and again we take $U$ to be $V$ in this case.

    \vspace{2mm}
    \noindent
    \textbf{Case 3}: Suppose $g$ is not identical to $f_x$ and $g$ is not identical to $f_y$ on $V$, then there exists a $t_0 \in V$ making $g(t_0)<f_y(t_0)$. Since $f_y-g$ is lower semicontinuous, the set $\{t\in [0,1] \mid  f_y(t)-g(t)>0\}$ is open, which means this set contains some open interval $U$ around $t_0$ such that $f_y(t)>g(t)$ on $U$. We know from part (1) that $f_x(t)\leq g(t)\leq f_y(t)$ for all $t \in [0,1]$, combined with this, we get $f_x(t)\leq g(t)< f_y(t)$ on $U$. Take $\Tilde{g}$ to be $g$ in this case.
\end{proof}

A local characterization of compact containment in $\mathrm{Cu}(C[0,1])$ was given in Proposition 5.5 of \cite{APS11}, and we record it below.  We do not require the definition of a countably based object in $\mathbf{Cu}$ here and instead note simply that every Cuntz semigroup of a separable C$^*$-algebra is countably based (see \cite{GP24}, the remark after Definition 11.16).

\begin{prop} \label{Local Cu(C[0,1]) cc}
    Let $M$ be a countably based semigroup in $\mathbf{Cu}$ and $X$ be a compact Hausdorff space with finite covering dimension. Given two lower semicontinuous functions $f$ and $g$ from $X$ to $M$, we have $g \ll f$ if and only if for every $t \in X$ there are an open neighborhood $U_t$ of $t$, and $c_t \in M$ such that $g(s) \leq c_t \ll f(s)$ for all $s \in U_t$.
\end{prop}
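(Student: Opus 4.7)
The plan is to prove the two directions separately, with the reverse implication being the substantive one.

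For the forward direction $(\Rightarrow)$, assume $g \ll f$ in the Cu-semigroup $\mathrm{Lsc}(X, M)$. Fix $t \in X$. Since $M$ is countably based, the element $f(t) \in M$ admits an increasing sequence $(c_n)$ in $M$ with $c_n \ll c_{n+1}$ and $\sup_n c_n = f(t)$. I would regard each $c_n$ as a constant lsc function on $X$ and manufacture from it an approximating sequence of lsc functions $f_n$ that equal $c_n$ near $t$ and agree with $f$ far from $t$; the standing characterization of lower semicontinuity (namely that $\{s \in X : c \ll h(s)\}$ is open for each $c \in M$ and each lsc $h$) guarantees that these interpolants are again lsc. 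Their supremum dominates $f$, so the hypothesis $g \ll f$ produces a single index $n$ with $g \leq f_n$, and restricting to a neighborhood $U_t$ on which $f_n \equiv c_n$ delivers $g(s) \leq c_n \ll f(s)$ on $U_t$, with $c_t := c_n$.

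For the reverse direction $(\Leftarrow)$, assume the local witnesses exist. By compactness of $X$, extract a finite subcover $U_{t_1}, \ldots, U_{t_k}$ with corresponding elements $c_i \in M$ satisfying $g(s) \leq c_i \ll f(s)$ on $U_{t_i}$. To verify $g \ll f$, consider an increasing sequence $(h_m)$ of lsc functions on $X$ with $f \leq \sup_m h_m$ pointwise; the goal is a single index $m_0$ with $g \leq h_{m_0}$. For each $i$ and $s \in U_{t_i}$, the inequality $c_i \ll f(s) \leq \sup_m h_m(s)$ produces an $m$ with $c_i \ll h_m(s)$; the sets $W_{i,m} = \{s \in U_{t_i} : c_i \ll h_m(s)\}$ are open by lower semicontinuity of $h_m$, increasing in $m$, and cover $U_{t_i}$. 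A second appeal to compactness of $X$, applied uniformly across $i = 1, \ldots, k$, yields a single $m_0$ with $c_i \ll h_{m_0}(s)$ throughout $U_{t_i}$ for every $i$, so that $g(s) \leq c_i \leq h_{m_0}(s)$ on each $U_{t_i}$ and hence on all of $X$.

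The main obstacle, and the place where the finite covering dimension of $X$ earns its keep, is the step in the forward direction where the local witnesses $c_t$ must be extracted from the global relation $g \ll f$: one needs to reassemble local data into an honest lsc approximant of $f$ on all of $X$, which in turn requires comparing and combining the constants $c_n$ on overlapping patches. When the $c_n$ are non-compact elements of $M$, the gluing is delicate because there is no pointwise maximum available in a general Cu-semigroup; finite covering dimension bounds the multiplicity of such overlaps, and together with the countably based hypothesis on $M$ ensures enough interpolation room to realize the desired lsc function globally. Once the structural isomorphism between $\mathrm{Lsc}(X,M)$ and the relevant Cuntz semigroup is in hand, the arguments above reduce to standard manipulations with the compact containment relation as developed in Section \ref{section: prelim}.
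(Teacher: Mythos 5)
The paper does not actually prove this proposition: it is quoted verbatim from Proposition 5.5 of \cite{APS11}, so there is no in-paper argument to compare against, and your proposal must stand on its own. Your reverse direction has the right skeleton (finite subcover, then reduce $g \ll f$ to a uniform choice of index against an increasing sequence $(h_m)$), but two steps are wrong as written. First, the definition of $\ll$ applied to $c_i \ll f(s) \leq \sup_m h_m(s)$ yields only $c_i \leq h_m(s)$, not $c_i \ll h_m(s)$; you need to interpolate $c_i \ll c' \ll f(s)$ (possible in any $\mathbf{Cu}$-semigroup) and then conclude $c_i \ll c' \leq h_m(s)$, hence $c_i \ll h_m(s)$. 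Second, your ``second appeal to compactness of $X$'' is applied to the increasing open covers $W_{i,m}$ of the open sets $U_{t_i}$, but an increasing open cover of a merely open set need not stabilize (consider $(1/m,1)$ covering $(0,1)$). You must first shrink the finite cover to closed sets $F_i \subseteq U_{t_i}$ that still cover $X$ (normality of the compact Hausdorff space $X$) and apply compactness to each $F_i$. Both repairs are routine.

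The forward direction, however, contains a genuine gap. The interpolants $f_n$ you describe, equal to the constant $c_n$ near $t$ and to $f$ away from $t$, are in general not lower semicontinuous, and the claim that the definition of lower semicontinuity ``guarantees that these interpolants are again lsc'' is false: at a boundary point $s_0$ of the patch one can have $c \ll f(s_0)$ with $c \not\leq c_n$, so $\{s : c \ll f_n(s)\}$ fails to be open (already visible for $M = \overline{\N}$ with $f(s_0)=5$, $c_n = 3$, $c = 4$). Moreover, the supremum of the $f_n$ need not dominate $f$: lower semicontinuity of an $M$-valued function imposes no upper bound on nearby values, so nothing forces $f(t)$ to dominate $f(s)$ for $s$ near $t$. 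Your closing paragraph also misplaces the role of finite covering dimension: producing a local witness at a single fixed $t$ requires no gluing over overlapping patches. The dimension hypothesis, together with countable basedness of $M$, is what guarantees that $\mathrm{Lsc}(X,M)$ is a genuine $\mathbf{Cu}$-semigroup admitting a basis of piecewise-constant-type elements; the standard proof of the forward direction runs through exactly that, writing $f = \sup_n f_n$ with $f_n \ll f_{n+1}$ built from such basic elements, concluding $g \leq f_{n_0}$ for some $n_0$, and reading the local witnesses off the relation $f_{n_0} \ll f_{n_0+1} \leq f$. As written, your forward direction does not go through.
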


We show now that in $\mathrm{Cu}(C[0,1])$, this local characterization of compact containment is equivalent to a global characterization of compact containment, and in fact, we can say more in the case where both elements are noncompact.

\begin{prop} \label{Cu(C[0,1]) cc}

    Let $x$ and $y$ be elements in $\mathrm{Cu}(C[0,1])$, and let $f_x$ and $f_y$ be the functions in $\mathrm{Lsc}([0,1], \overline{\N})$ associated with x and y, respectively. The following are equivalent
    \begin{enumerate}
        \item $x\ll y$.
        \item For all $t \in [0,1]$, there exist an open interval $U_t$ around $t$ and $c_t \in \overline{\N}$ such that $f_x(s) \leq c_t \leq f_y(s)$ for all $s\in U_t$.
        \item There exists a continuous function $g: [0,1] \longrightarrow [0,\infty)$ such that \[f_x(t) \leq g(t) \leq f_y(t)\] for all $t \in [0,1]$.
    \end{enumerate} If, moreover,  either $x$ or $y$ is compact in $\mathrm{Cu}(C[0,1])$, then 
    \[
    x\ll y \iff f_x \leq f_y.
    \]
    Finally, if $y$ is noncompact, then $x\ll y$ implies that there is a nonempty open interval $U \subset [0,1]$ such that $f_x(t) \leq g(t) < f_y(t)$ for all $t \in U$.
\begin{proof}
    In Proposition \ref{Local Cu(C[0,1]) cc}, take $X=[0,1]$, $M=\overline{\N}$, it is not hard to see in $M=\overline{\N}$, compact containment is equivalent to $\leq$. This proposition says (1) $\iff$ (2).

    Lemma \ref{strict < on open} gives (1) $\implies$ (3), and that if $x$ and $y$ are both noncompact, then \mbox{$x\ll y$} implies there exists a nonempty open interval $U \subset [0,1]$ such that \mbox{$f_x(t) \leq g(t) < f_y(t)$} for all $t \in U$. 
    
    Let us show (3)$\implies$(2). Suppose there exists a continuous function \[g: [0,1] \longrightarrow [0,\infty)\] such that $f_x(t) \leq g(t) \leq f_y(t)$ for all $t \in [0,1]$. We want to show for any $t \in [0,1]$, there are an open interval $U_t$ around $t$ and a $c_t \in \overline{\N}$ such that $f_x(s) \leq c_t \leq f_y(s)$ for all $s\in U_t$. 
    Pick an arbitrary $t\in[0,1]$, we define $c_t=f_y(t)$.

    \vspace{2mm}
    \noindent
    \textbf{Case 1}: If $f_x(t)<f_y(t)=c_t$, by lower semicontinuity of $f_y$, the set \[\{s\in [0,1] \mid f_y(s)>c_t-\frac{1}{2}\}\] is an open set containing $t$, so it contains an open interval $V_t$ around $t$, this means $f_y(s)>c_t-\frac{1}{2}$ for all $s \in V_t$. 
    We know $f_x$ is integer-valued, so $f_y(s)\geq c_t$ for all $s \in V_t$.
    By continuity of $g$, given $\epsilon=\frac{1}{2}$, there exists an open interval $W_t$ around $t$ such that $g(s) \leq g(t) + \frac{1}{2}$ for all $s \in W_t$, and so
    \begin{align*}
        f_x(s) \leq g(s) \leq g(t) + \frac{1}{2} \leq f_y(t) + \frac{1}{2} = c_t + \frac{1}{2}.
    \end{align*} Again, as $f_x$ is integer-valued, this means $f_x(s) \leq c_t$ for all $s\in W_t$. Set $U_t=V_t \cap W_t$, we have $f_x(s) \leq c_t \leq f_y(s)$ for all $s\in U_t$.

    \vspace{2mm}
    \noindent
    \textbf{Case 2}: If $f_x(t)=g(t)=f_y(t)=c_t$, by continuity of $g$, given $\epsilon=\frac{1}{2}$, there exists $\delta>0$ such that if $s \in U_t=(t-\delta,t+\delta)$ then $|g(s)-g(t)|<\frac{1}{2}$, that is, $c_t-\frac{1}{2}<g(s)<c_t+\frac{1}{2}$. Then $f_x(s)\leq g(s) < c_t+\frac{1}{2}$ and $f_y(s)\geq g(s) > c_t-\frac{1}{2}$. Since $f_x$ and $f_y$ are integer valued, we have $f_x(s)\leq c_t \leq f_y(s)$ for all $s\in U_t$.

    Finally, it follows from Remark \ref{rem: either compact} that if either $x$ or $y$ is compact, then \mbox{$x \ll y \iff x\leq y$}, and Theorem \ref{Cu(C[0,1]) iso} that $x \leq y \iff f_x \leq f_y$.
\end{proof}

\begin{lem} \label{cc lemma}
    Let $X$ be a compact Hausdorff space. Let $r$ and $g$ be functions from $X$ to $\R$ such that  $g$ is lower semicontinuous, $r$ is continuous, and $g(x) > r(x)$ for all $x \in X$. Let $(g_n)_{n \in \N}$ be an increasing sequence of lower semicontinuous functions from $X$ to $\R$ such that $\sup_{n \in \N}g_n(x) \geq g(x)$ for all $x\in X$. Then there exists $n_0 \in \N$ such that $g_{n_0}(x) > r(x)$ for all $x\in X$.
\end{lem}
\begin{proof}
    Since $g-r$ is lower semicontinuous and strictly positive, it achieves a minimum $m>0$ on X. Then $g(x)-r(x) \geq m >\frac{m}{2}>0$ for all $x\in X$. Define $\Tilde{g_n}(x)=\max\{0, r(x)-g_n(x)+\frac{m}{2}\}$, then $(\Tilde{g_n})_{n \in \N}$ is upper semicontinuous, decreasing, non-negative, and converges pointwise to 0. 
    By Dini's theorem for semicontinuous functions, the convergence is uniform. Therefore, given $\epsilon=\frac{m}{2}$, there exists $n_0 \in \N$ such that $0 \leq \Tilde{g_n}(x) <\frac{m}{2}$ for all $x \in X$ and $n \geq n_0$. 
    In particular, $0 \leq r(x)-g_{n_0}(x)+\frac{m}{2} < \frac{m}{2}$, and hence $g_{n_0}(x)>r(x)$ for all $x\in X$.
\end{proof}

{\rm We now give a characterization of compact containment in terms of rank functions.}

\begin{prop}\label{Cu(A) cc}
Let $A$ be a simple, separable, unital, $\mathcal{Z}$-stable C$^*$-algebra with $\mathrm{QT}(A) \neq \emptyset$ and let $x, y$ be elements in $\mathrm{Cu}(A)$. 

\begin{enumerate}
    \item If x and y are both compact and $x\neq y$, then \[x \ll y \iff d_{\tau}(x)<d_{\tau}(y) \quad \forall \tau \in \mathrm{QT}(A).\]
    \item  If x is compact and y is noncompact, then \[x \ll y \iff d_{\tau}(x)<d_{\tau}(y) \quad \forall \tau \in \mathrm{QT}(A).\]
    \item If x is noncompact and y compact, then \[x \ll y \iff d_{\tau}(x)\leq d_{\tau}(y) \quad \forall \tau \in \mathrm{QT}(A).\]
    \item If x and y are both noncompact, then the following are equivalent:
    \begin{enumerate}
        \item $x\ll y$
        \item There exists a continuous and bounded function $r: \mathrm{QT}(A) \longrightarrow (0,\infty)$ such that $\hat{x}(\tau):=d_{\tau}(x) < r(\tau) < d_{\tau}(y)=:\hat{y}(\tau)$ for all $\tau \in \mathrm{QT}(A)$.
        \item There exists a continuous and bounded function $r: \mathrm{QT}(A) \longrightarrow (0,\infty)$ such that $\hat{x}(\tau)=d_{\tau}(x) \leq r(\tau) < d_{\tau}(y)=:\hat{y}(\tau)$ for all $\tau \in \mathrm{QT}(A)$.
    \end{enumerate} 
\end{enumerate}

\end{prop}
\begin{proof}
    \textbf{(1)}. Suppose both $x$ and $y$ are compact and $x\neq y$. By Corollary \ref{substraction in Cu(A)}, \mbox{$x\leq y \implies \exists z \in \mathrm{Cu}(A)$} such that $x+z=y$. Observe that if $x=[p]$ and $y=[q]$ for some projections $p$ and $q$ in $(A \otimes\mathcal{K})_{+}$, then $q-p$ is a representative of $z$, that is, $z=[p-q]$. By dominated convergence theorem $d_{\tau}(z)=\tau(q-p)$, so
    \begin{align*}
        d_{\tau}(y) &= d_{\tau}(x) + d_{\tau}(z) = d_{\tau}(x) + \tau(q-p)
    \end{align*}  We claim that any $\tau \in \mathrm{QT}(A)$ is faithful. Indeed, we know $\mathrm{QT}(A)$ is nonempty, for any nonzero quasitrace $\tau \in \mathrm{QT}(A)$, define $N_{\tau}=\{x \in A \mid \tau(x^*x)=0\}$, then $N_{\tau}$ is an ideal of $A$. 
    Since $1_A$ is not in $N_{\tau}$ and $A$ is simple, $N_{\tau}$ must be the zero ideal, and so $\tau$ is faithful. 
    Since $\tau$ is faithful and $x \neq y$, we have $\tau(q-p)>0$, and so $d_{\tau}(x)<d_{\tau}(y)$ for all $\tau \in \mathrm{QT}(A)$.
    
    Now suppose $d_{\tau}(x)<d_{\tau}(y)$ for all $\tau \in \mathrm{QT}(A)$.
    Since $A$ is $\mathcal{Z}$-stable, it has strict comparison, this implies $x \leq y$.

    \vspace{2mm}
    \noindent
    \textbf{(2)}. This follows from Proposition \ref{qt: compact noncompact}.

    \vspace{2mm}
    \noindent
    \textbf{(3)}. Suppose $x$ is noncompact and $y$ is compact. Since $d_{\tau}$ is order-preserving, we have \[x \ll y \implies d_{\tau}(x)\leq d_{\tau}(y) \quad \forall \tau \in \mathrm{QT}(A).\] 
    By Proposition \ref{qt: noncompact compact}, \[x \ll y \impliedby d_{\tau}(x)\leq d_{\tau}(y) \quad \forall \tau \in \mathrm{QT}(A).\]

    \vspace{2mm}
    \noindent
    \textbf{(4)}. It is clear that (b) $\implies$ (c).
    
    Let us show (a) $\implies$ (b). 
    Suppose $x=[a]$ and $y=[b]$ for some $a,b \in (A \otimes\mathcal{K})_{+}$. Let $X=\sigma(b)-\{0\}$, and let us consider the commutative subalgebra $C^*(b)$ generated by $b$, which is isomorphic to $C_0(X)$. 
    Pick an arbitrary $\tau \in \mathrm{QT}(A)$, we know it becomes a tracial state on this commutative subalgebra $C_0(X)$. 
    Let $\mu_\tau$ be the probability measure on $X$ induced by $\tau$ in the Riesz-Markov-Kakutani theorem, that is, for all $f \in C_0(X)$, we have $\tau(f)=\int_{X}fd\mu_{\tau}$. 
    Think of each $f \in C_0(X)$ as a function in $C(\sigma(b))$ such that $f(0)=0$ and apply \mbox{Proposition \ref{rank, measure of suppport}}, we get $d_{\tau}(f(b))=\mu_{\tau}\left(\operatorname{supp}_{\mathrm{o}}(f(b))\right)$ for all $f(b) \in C^*(b)$.
    We have seen in part \textbf{(1)} that $\tau$ is faithful, so the induced measure $\mu_{\tau}$ is also faithful. 
    This is because for any nonempty open set $O \subset X$, $\mu(O)=\int_{X}\chi_{O}d\mu_{\tau}=\tau(\chi_{O})>0$.
    By Proposition \ref{epsilon cc} $x \ll y$ implies there exists $\delta >0$ such that $a \precsim (b-\delta)_+$, and so $d_{\tau}(a) \leq d_{\tau}((b-\delta)_+)$. 
    Since $y=[b]$ is noncompact, we know from  \mbox{Proposition \ref{0 accum pt for noncompact}} that $0\in \sigma(b)$ and $0$ is an accumulation point of $\sigma(b)$. 
    Observe that we can assume $\delta \in X=\sigma(b)-\{0\}$, otherwise we can keep replacing $\delta$ by $\frac{\delta}{2}$ until it is in $X$. 
    Moreover, we can pick two other points $\delta_1$ and $\delta_2$ in $X$ such that \mbox{$0<\delta_2<\delta_1<\delta$}. Then $(\delta_2, \delta_1)$, $(\delta_1, \delta)$, and $(\delta_1, \delta]$ are nonempty  intervals in $X$.
    Now we claim that \[\mu_{\tau}((\delta, \infty) \cap X) < \mu_{\tau}((\delta_1, \infty) \cap X).\] Indeed, as $\mu_{\tau}$ is faithful, we have \[\mu_{\tau}((\delta_1, \infty) \cap X) - \mu_{\tau}((\delta, \infty) \cap X) = \mu_{\tau}(\delta_1, \delta] \geq \mu_{\tau}(\delta_1, \delta) > 0.\]
    Similarly, \[\mu_{\tau}([\delta_1, \infty) \cap X) < \mu_{\tau}((\delta_2, \infty) \cap X).\] 
    Observe that \begin{align*}
        &\tau \mapsto \mu_{\tau}([\delta_1, \infty) \cap X) \quad \text{is upper semicontinuous, while }\\
        &\tau \mapsto \mu_{\tau}((\delta_2, \infty) \cap X)=d_{\tau}((b-\delta_2)_+) \quad \text{is lower semicontinuous.}
    \end{align*}  
    We know when $A$ is unital, $\mathrm{QT}(A)$ is compact in the topology of pointwise convergence.
    By Theorem \ref{continuous btw usc and lsc}, there exists a continuous function  \mbox{$r: QT(A) \longrightarrow (0,\infty]$} such that \[\mu_{\tau}([\delta_1, \infty) \cap X) \leq r(\tau) \leq \mu_{\tau}((\delta_2, \infty) \cap X) \quad \forall \tau \in \mathrm{QT}(A).\] 
    Then we have
    \begin{align*}
        d_{\tau}(a) &\leq d_{\tau}((b-\delta)_+) = \mu_{\tau}((\delta, \infty) \cap X) < \mu_{\tau}((\delta_1, \infty) \cap X) \\ &\leq \mu_{\tau}([\delta_1, \infty) \cap X) \leq r(\tau) \leq \mu_{\tau}((\delta_2, \infty) \cap X) < \mu_{\tau}(X) = d_{\tau}(b) \quad \forall \tau \in \mathrm{QT}(A).
    \end{align*}
    In particular, this shows
    \begin{align*}
        d_{\tau}(a)<r(\tau)<d_{\tau}(b) \quad \forall \tau \in \mathrm{QT}(A).
    \end{align*}

    Now let us show (c) $\implies$ (a). 
    Suppose there exists a continuous function $r: QT(A) \longrightarrow (0,\infty)$ such that \[\hat{x}(\tau)=d_{\tau}(a)\leq r(\tau)<d_{\tau}(b)=\hat{y}(\tau) \quad \forall \tau \in \mathrm{QT}(A)\]
    Let $\{g_n\}_{n \in \N}$ be an increasing sequence of functions from $\mathrm{QT}(A)$ to $(0, \infty]$ such that $\hat{y}(\tau)=\sup_{n \in \N}g_n(\tau)$ for all $\tau \in \mathrm{QT}(A)$. Then by Lemma \ref{cc lemma}, there exists $n_0 \in \N$ such that $g_{n_0}(\tau) > r(\tau) \geq \hat{x}(\tau)$.  and by definition of compact containment, this means $\hat{x} \ll \hat{y}$, and so $x \ll y$.
\end{proof}
\end{prop}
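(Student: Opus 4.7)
The plan is to establish $(1)\iff(2)$ by specializing the local characterization of Proposition \ref{Local Cu(C[0,1]) cc}, and then to close the cycle via $(1)\Rightarrow(3)\Rightarrow(2)$, using Lemma \ref{strict < on open}(1) together with a direct neighborhood construction. The two addenda (the compact-case equivalence and the noncompact-$y$ assertion) will then follow quickly from results already in place.

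For $(1)\iff(2)$, I would specialize Proposition \ref{Local Cu(C[0,1]) cc} with $X=[0,1]$ (compact Hausdorff, covering dimension one) and $M=\overline{\N}$ (a countably based $\mathbf{Cu}$-semigroup in which compact containment reduces to $\leq$ for the finite values of $c_t$ that arise here). Under the isomorphism $\mathrm{Cu}(C[0,1]) \cong \mathrm{Lsc}([0,1], \overline{\N})$ of Theorem \ref{Cu(C[0,1]) iso}, the pair $(x,y)$ corresponds to $(f_x, f_y)$, and every open neighborhood of a point in $[0,1]$ may be shrunk to an open interval, so the local condition of APS11 matches condition $(2)$ verbatim. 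The implication $(1)\Rightarrow(3)$ is just Lemma \ref{strict < on open}(1) applied to $x\ll y$.

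The substantive step is $(3)\Rightarrow(2)$. Given a continuous $g$ with $f_x \leq g \leq f_y$, I would fix $t \in [0,1]$ and propose $c_t := f_y(t)$ (or a suitable large integer when $f_y(t)=\infty$, using local boundedness of $g$ to bound $f_x$). If $f_x(t) < f_y(t)$, lower semicontinuity and integer-valuedness of $f_y$ produce an open interval $V_t$ around $t$ with $f_y(s) \geq c_t$, while continuity of $g$ together with integer-valuedness of $f_x$ produce an open interval $W_t$ around $t$ with $f_x(s) \leq c_t$; then $U_t := V_t \cap W_t$ works. In the borderline case $f_x(t) = g(t) = f_y(t) = c_t$, continuity of $g$ provides an interval on which $|g(s) - c_t| < 1/2$, and integer-valuedness of $f_x, f_y$ combined with $f_x \leq g \leq f_y$ forces $f_x(s) \leq c_t \leq f_y(s)$ throughout.

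For the compact-case remark, Remark \ref{rem: either compact} collapses $\ll$ to $\leq$ whenever $x$ or $y$ is compact, and Theorem \ref{Cu(C[0,1]) iso} identifies $\leq$ in $\mathrm{Cu}(C[0,1])$ with pointwise $\leq$ of the associated rank functions. When $y$ is noncompact and $x \ll y$, Lemma \ref{strict < on open}(2) supplies directly the nonempty open interval $U$ on which $f_x(t) \leq g(t) < f_y(t)$. The main obstacle I expect is the bookkeeping in the case analysis of $(3)\Rightarrow(2)$: the argument must delicately combine continuity of the real-valued $g$ with the integer-valued lower semicontinuity of $f_x$ and $f_y$ to pin down a single constant $c_t$ that fits between them on an open interval, and the case where $f_y(t)=\infty$ requires using the local boundedness of $g$ to produce a usable finite $c_t$.
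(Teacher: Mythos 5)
Your proposal follows essentially the same route as the paper's proof: $(1)\iff(2)$ by specializing Proposition \ref{Local Cu(C[0,1]) cc} to $X=[0,1]$ and $M=\overline{\N}$, $(1)\Rightarrow(3)$ and the noncompact-$y$ addendum via Lemma \ref{strict < on open}, $(3)\Rightarrow(2)$ by the same two-case neighborhood argument with $c_t=f_y(t)$, and the compact case via Remark \ref{rem: either compact} and Theorem \ref{Cu(C[0,1]) iso}. Your explicit treatment of the case $f_y(t)=\infty$ (replacing $c_t=\infty$ by a suitable large finite integer, using local boundedness of $g$) is actually a point where you are more careful than the paper, whose literal choice $c_t=f_y(t)$ breaks down there because lower semicontinuity does not force $f_y\equiv\infty$ on a neighborhood of $t$.
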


\section{Proof of Theorem \ref{main}}\label{section: proof of main}

In this section we prove our main result, restated here:

\begin{customthm}{1.1}
Let $A$ be unital simple separable $\mathcal{Z}$-stable C$^*$-algebra with nonempty quasitrace simplex $\mathrm{QT}(A)$.  Let $h$ be a continuous affine map from $\mathrm{QT}(A)$ into the set $\mathcal{P}([0,1])^+$ of faithful regular Borel probability measures on $[0,1]$.  It follows that there is a self-adjoint element $a \in A$ with spectrum equal to $[0,1]$ such that $f_a = h$.  
\end{customthm}

Before beginning in earnest let us outline the broad strategy.  We first use the map $h$ to construct a map $\gamma_h:\mathrm{Cu}(C[0,1]) \to \mathrm{Cu}(A)$ and show it is a $\mathbf{Cu}$-morphism. We then use a result of L. Robert to lift $\gamma_h$ to a $*$-homomorphism $\Gamma_h:C[0,1] \to A$ (Theorem \ref{Cu-morphism lifts to $*$-homomorphism}).  Finally, we demonstrate that the image of the identity function under $\Gamma_h$ is a positive operator $a \in A$ such that $f_a = h$.

Recall that 
\[
\mathrm{Cu}(C[0,1]) \cong \mathrm{Lsc}([0,1], \overline{\N})
\]
and
\[
\mathrm{Cu}(A) \cong \mathrm{V}(A) \sqcup \mathrm{LAff(QT}(A){)_{++}}
\]
(see Theorems \ref{Cu(C[0,1]) iso} and \ref{Cu(A) iso}).
Let us define \[\gamma_h: \mathrm{Lsc}([0,1], \overline{\N}) \longrightarrow \mathrm{V}(A) \sqcup \mathrm{LAff(QT}(A){)_{++}}\] as follows:
  \begin{enumerate}
      \item If $f \in \mathrm{Lsc}([0,1], \overline{\N})$ is compact, then we know $f$ is constant with value $n \in \N$ on $[0,1]$ and we define
      \begin{align*}
          \gamma_{h}(f)=n[1_{A}];
      \end{align*}
      \item If $f \in \mathrm{Lsc}([0,1], \overline{\N})$ is noncompact, then we define
      \begin{align*}
          \gamma_{h}(f)(\tau)=\int_{[0,1]}fdh(\tau).
      \end{align*}
  \end{enumerate}
To show $\gamma_h$ is a well-defined $\mathbf{Cu}$-morphism, we need a lemma.

\begin{lem} \label{r(tau) continuous}
    Let $f: [0,1] \longrightarrow [0, \infty]$ be a measurable function, and let $r: \mathrm{QT}(A) \longrightarrow \R$ be defined by $r(\tau)=\int_{[0,1]}fdh(\tau)$. If $f$ is lower semicontinuous, then $r$ is lower semicontinuous; if $f$ is continuous, then $r$ is continuous.
\end{lem}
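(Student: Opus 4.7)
The plan is to exploit the continuity of $h:\mathrm{QT}(A)\to(\mathcal{P}([0,1])^+,d_{LP})$ together with the Portmanteau theorem. By Theorem \ref{weak conv equiv to conv in L-P metric}, convergence in the Lévy--Prokhorov metric is equivalent to weak convergence on the separable complete space $[0,1]$, so any sequence $\tau_n\to\tau$ in $\mathrm{QT}(A)$ yields $h(\tau_n)\to h(\tau)$ weakly. The lemma then reduces to applying the appropriate characterization of weak convergence (item (1) or item (4) of the Portmanteau theorem) to the integrand $f$. Since $A$ is separable, $\mathrm{QT}(A)$ is metrizable in the topology of pointwise convergence, so sequential arguments suffice to verify both the lower semicontinuity and the continuity of $r$.

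For the continuous case, a continuous $f:[0,1]\to[0,\infty)$ on the compact set $[0,1]$ is automatically bounded, and item (1) of the Portmanteau theorem gives
\[
\int_{[0,1]} f\,dh(\tau_n)\longrightarrow \int_{[0,1]} f\,dh(\tau),
\]
which is exactly the continuity of $r$ at $\tau$.

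For the lower semicontinuous case I would first treat $f$ bounded: item (4) of the Portmanteau theorem yields $\liminf_n \int f\,dh(\tau_n)\geq \int f\,dh(\tau)$, which is precisely the sequential lower semicontinuity of $r$. For a general lsc $f:[0,1]\to[0,\infty]$ I would approximate from below by the truncations $f_k:=\min\{f,k\}$, which are lsc and bounded; by the bounded case each $r_k(\tau):=\int f_k\,dh(\tau)$ is lsc on $\mathrm{QT}(A)$, and the monotone convergence theorem gives $r=\sup_k r_k$ pointwise. Since a pointwise supremum of an increasing sequence of lsc functions is lsc, $r$ itself is lsc.

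The only real subtlety is the possibility that $f$ attains the value $\infty$, which is handled cleanly by the truncation argument above; beyond this, the proof is a routine application of Portmanteau together with the hypothesized continuity of $h$, and requires no further structural information about $A$.
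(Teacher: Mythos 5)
Your proof is correct and follows essentially the same route as the paper: use continuity of $h$ together with Theorem \ref{weak conv equiv to conv in L-P metric} to get weak convergence $h(\tau_n)\to h(\tau)$, then apply items (1) and (4) of the Portmanteau theorem to the integrand. Your truncation argument for $[0,\infty]$-valued $f$ is a careful extra detail the paper glosses over, but it does not change the substance of the argument.
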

\begin{proof}
    Let $\{\tau_n\}_{n \in \N}$ be a sequence of points in $\mathrm{QT}(A)$ that converges pointwise to $\tau$ in $\mathrm{QT}(A)$. 
    By continuity of $h$ and Theorem \ref{weak conv equiv to conv in L-P metric}, $h(\tau_n)$ converges to $h(\tau)$ weakly. 
    By the Portmanteau theorem, for every lower semicontinuous function \mbox{$f: [0,1] \longrightarrow [0, \infty]$}, in particular for $f \in \mathrm{Lsc}([0,1], \overline{\N})$, we have \[\liminf \int_{[0,1]}fdh(\tau_n) \geq \int_{[0,1]}fdh(\tau),\] so $r$ is lower semicontinuous. 
    Similarly, if $f$ is continuous, then it is also bounded, and the Portmanteau theorem says $\int_{[0,1]}fdh(\tau_n)$ converges to $\int_{[0,1]}fdh(\tau)$, so $r$ is continuous.
\end{proof}
This shows that when $f \in \mathrm{Lsc}([0,1], \overline{\N})$ is noncompact, $\gamma_{h}(f)$ is indeed lower semicontinuous. It is also easy to see $\gamma_{h}(f)$ is affine when $h$ is affine. Finally, $h(\tau)$ is a faithful measure and so if $f \neq 0$ then there are some $\epsilon>0$ and \mbox{$U \subseteq [0,1]$} open such that $f > \epsilon$ on $U$.  It follows that $\gamma_h(f) > 0$ for every $\tau \in \mathrm{QT}$(A), whence \mbox{$\gamma_{h}(f) \in  \mathrm{LAff(QT}(A){)_{++}}$} and $\gamma_{h}$ is well-defined.



\begin{prop}
     $\gamma_h$ is a $\mathbf{Cu}$-morphism.
\end{prop}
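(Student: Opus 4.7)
The plan is to verify each of the five axioms for a $\mathbf{Cu}$-morphism: preservation of $0$, addition, order, suprema of increasing sequences, and compact containment. Preservation of $0$ is immediate. For addition, I would split into cases according to whether each input is compact (a constant in $\N$) or noncompact, combining linearity of the integral with the addition rule $n[1_A] + g = n + g$ in $\mathrm{LAff}(\mathrm{QT}(A))_{++}$ from Theorem \ref{Cu(A) iso}(1). Preservation of order is routine in the purely compact and purely noncompact cases; in the mixed case where $f_x = n$ is compact and $f_y$ is noncompact with $f_x \leq f_y$, the function $f_y$ cannot be identically $n$, so by lower semicontinuity $f_y > n$ on some nonempty open set $U$, and faithfulness of $h(\tau)$ gives the strict inequality $\int f_y \, dh(\tau) > n$ that Theorem \ref{Cu(A) iso}(2a) requires.

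For preservation of suprema, let $(f_n)$ be increasing in $\mathrm{Lsc}([0,1], \overline{\N})$ with pointwise supremum $f$. In both branches of the definition of $\gamma_h$ the uniform identity $\widehat{\gamma_h(f_n)}(\tau) = \int f_n \, dh(\tau)$ holds, and the monotone convergence theorem yields $\widehat{\gamma_h(f_n)}(\tau) \nearrow \int f \, dh(\tau)$. If $f$ is noncompact, $\gamma_h(f) \in \mathrm{LAff}(\mathrm{QT}(A))_{++}$ is the pointwise supremum of the $\widehat{\gamma_h(f_n)}$, and a direct case check using the order relations of Theorem \ref{Cu(A) iso} shows any upper bound of $(\gamma_h(f_n))$ in $\mathrm{Cu}(A)$ must dominate $\gamma_h(f)$. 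If $f$ is the constant $m \in \N$, then the closed sets $V_n = \{t \in [0,1] : f_n(t) < m\}$ decrease to $\emptyset$, so by compactness of $[0,1]$ some $V_{n_0}$ is already empty, forcing $f_n = m$ for all $n \geq n_0$; the supremum is then trivially $m[1_A] = \gamma_h(f)$.

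The main obstacle is preservation of compact containment, and this is exactly where Proposition \ref{Cu(C[0,1]) cc} and Proposition \ref{Cu(A) cc} are essential. Assume $f_x \ll f_y$. The cases in which at least one side is compact reduce to pointwise comparisons handled as in the order argument, using parts (1)--(3) of Proposition \ref{Cu(A) cc}. The substantive case is when both $f_x$ and $f_y$ are noncompact. Proposition \ref{Cu(C[0,1]) cc} supplies a continuous $g: [0,1] \to [0, \infty)$ with $f_x \leq g \leq f_y$ on $[0,1]$ together with a nonempty open interval $U$ on which $g < f_y$. Define $r(\tau) := \int_{[0,1]} g \, dh(\tau)$. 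By Lemma \ref{r(tau) continuous} the function $r$ is continuous; it is bounded because $g$ is bounded on the compact interval $[0,1]$; it is strictly positive because $f_x$ is nonzero noncompact, hence positive on some nonempty open set which carries positive $h(\tau)$-mass. The inequality $r(\tau) \geq \gamma_h(f_x)(\tau)$ is immediate from $g \geq f_x$, and the strict inequality $r(\tau) < \gamma_h(f_y)(\tau)$ follows because $f_y - g > 0$ on $U$ while $h(\tau)(U) > 0$, so $\int (f_y - g) \, dh(\tau) > 0$. Proposition \ref{Cu(A) cc}(4) then yields $\gamma_h(f_x) \ll \gamma_h(f_y)$, completing the verification.
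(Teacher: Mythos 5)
Your proposal is correct and follows essentially the same route as the paper: the same case analysis for additivity and order, the monotone convergence theorem for suprema, and, for the crucial compact-containment step with both elements noncompact, the same combination of the interpolating continuous function from Proposition \ref{Cu(C[0,1]) cc} (strict on a nonempty open interval), Lemma \ref{r(tau) continuous}, faithfulness of $h(\tau)$, and Proposition \ref{Cu(A) cc}(4). The only differences are cosmetic streamlinings — you unify the suprema case via the identity $\widehat{\gamma_h(f_n)}(\tau)=\int f_n\,dh(\tau)$ and handle the compact-supremum subcase by a finite-intersection argument where the paper invokes $f\ll f$ directly, and you route the compact-side cases of $\ll$ through Proposition \ref{Cu(A) cc}(1)--(3) where the paper uses Remark \ref{rem: either compact} — none of which changes the substance.
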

\begin{proof}
    We need to check that $\gamma_h$ preserves addition, neutral element, order $\leq$, suprema of increasing sequences, and also the compact containment relation $\ll$.

    \vspace{2mm}
    \noindent
    \textbf{(i) Additivity}: If $f$ and $g$ are both compact, say $f=n$ and $g=m$ where $m,n \in \N$, then $\gamma_h(f+g)=(n+m)[1_{A}]=n[1_{A}]+m[1_{A}]=\gamma_h(f)+\gamma_h(g)$.
    If $f$ and $g$ are both noncompact, then for any $\tau \in \mathrm{QT}(A)$, \begin{align*}
        \gamma_h(f+g)(\tau)&=\int_{[0,1]}f+gdh(\tau)\\&=\int_{[0,1]}fdh(\tau)+\int_{[0,1]}gdh(\tau)\\&=\gamma_h(f)(\tau)+\gamma_h(g)(\tau)
    \end{align*}
    If $f$ is compact and $g$ is noncompact, say $f=n\in\N$, then $f+g$ is noncompact, we have \begin{align*}
        \gamma_h(f+g)(\tau)&=\int_{[0,1]}n+gdh(\tau)\\&=\int_{[0,1]}ndh(\tau)+\int_{[0,1]}gdh(\tau)\\&=\widehat{\gamma_h(f)}(\tau)+\gamma_h(g)(\tau)
    \end{align*}
    \vspace{2mm}
    \noindent
    \textbf{(ii) Preservation of zero}: Clearly the zero function on $[0,1]$ is compact, therefore $\gamma_h(0)=0[1_{A}]=0_{\mathrm{Cu}(A)}$.
    
    \vspace{2mm}
    \noindent
    \textbf{(iii) Preservation of order}: If $f$ and $g$ are both compact, say $f=n$ and $g=m$ where $m,n \in \N$, then $f\leq g$ is equivalent to $n\leq m$, and we have $\gamma_h(f)=n[1{A}]\leq m[1_{A}]=\gamma_h(g)$. If $f$ and $g$ are both noncompact, and $f\leq g$, then for all $\tau \in T(A)$ \begin{align*}
        \gamma_h(f)(\tau)=\int_{[0,1]}fdh(\tau)\leq \int_{[0,1]}gdh(\tau)=\gamma_h(g)(\tau)
    \end{align*} If $f$ is compact and $g$ is noncompact, say $f=n\in\N$. 

    \vspace{2mm}
    \noindent
    \textbf{Case 1}: If $f\leq g$ then $g\geq n$ on $[0,1]$. We know $U=\{t \mid g(t) > n\}$ is an open set by lower semicontinuity of $g$. $U$ is also nonempty, otherwise $g$ would be identically $n$ and hence compact.  Then for all $\tau \in \mathrm{QT}(A)$ we have \begin{align*}
        \widehat{\gamma_h(f)}(\tau)&=\int_{[0,1]}ndh(\tau)
        \\&=\int_{U}ndh(\tau)+\int_{[0,1]-U}ndh(\tau) 
        \\&<\int_{U}gdh(\tau)+\int_{[0,1]-U}gdh(\tau)
        \\&=\int_{[0,1]}gdh(\tau) 
        \\&=\gamma_h(g)(\tau)
    \end{align*} 

    \vspace{2mm}
    \noindent
    \textbf{Case 2}: If $g\leq f=n\in \N$, for all $\tau \in \mathrm{QT}(A)$ \begin{align*}
        \gamma_h(g)(\tau)=\int_{[0,1]}gdh(\tau)\leq\int_{[0,1]}ndh(\tau)=\widehat{\gamma_h(f)}(\tau)
    \end{align*}

    \vspace{2mm}
    \noindent
    \textbf{(iv) Preservation of suprema}:
    Let $\{f_n\}_{n\in\N}$ be an increasing sequence of functions in $\mathrm{Lsc}([0,1], \overline{\N})$ and let $f=\sup_{n\in\N}f_n$. 

    \vspace{2mm}
    \noindent
    \textbf{Case 1}: If $f$ is compact, then by definition of compact containment, there exists $n_0\in\N$ such that $f_n=f$ for all $n\geq n_0$. Therefore, we can just assume $\{f_n\}_{n\in\N}$ to be the constant sequence of this compact element $f=N\in\N$. 
    Then \mbox{$\gamma_h(f)=N[1_{A}]=\gamma_h(f_n)$} for all $n\in\N$, and we have $\gamma_h(f)=\sup_{n\in\N}\gamma_h(f_n)$ trivially.

    \vspace{2mm}
    \noindent
    \textbf{Case 2}: If $f$ is noncompact, and only finitely many $f_n$'s are compact, then without loss of generality, we can assume all $f_n$'s are noncompact, as otherwise we can replace $f_n$ by its tail subsequence. Applying monotone convergence theorem, we have
    \begin{align*}
        \gamma_h(f)(\tau)&=\int_{[0,1]}fdh(\tau)
        \\&=\int_{[0,1]}\sup_{n \in \N}f_n dh(\tau)
        \\&=\sup_{n \in \N}\int_{[0,1]}f_ndh(\tau)
        \\&=\sup_{n \in \N}\gamma_h(f_n)(\tau)
    \end{align*} for all $\tau \in \mathrm{QT}(A)$.

    \vspace{2mm}
    \noindent
    \textbf{Case 3}: If $f$ is noncompact, and there are infinitely many distinct compact elements in the sequence $\{f_n\}_{n\in\N}$, let us denote the subsequence of all compact elements by $\{f_{n_k}\}$, where $f_{n_k}=r_{n_k}[1_{A}]$ and $r_{n_k}\in\N$. Observe that $r_{n_k}$ must go to infinity as $f_{n_k}$'s are distinct. Then $f(t)=\sup_{n\in\N}f_n(t)\geq\sup_{n_k\in\N}f_{n_k}(t)=\infty$ for all $t \in [0,1]$. 
    We have shown that $\gamma_h$ is order-preserving, so 
    \[
    d_{\tau}(\gamma_h(f))\geq d_{\tau}(r_{n_k}[1_{A}])=r_{n_k}. 
    \]
    Since $r_{n_k}$ goes to infinity, we have $d_{\tau}(\gamma_h(f))=\infty$ for all $\tau \in \mathrm{QT}(A)$. We know $\mathcal{Z}$-stability implies strict comparison of positive elements, so $\gamma_h(f)=\infty$ (here $\infty$ denotes the maximal element in $\mathrm{Cu}(A)$). On the other hand, 
    \[
    \sup_{n\in\N}\gamma_h(f_n)\geq\sup_{n_k\in\N}\gamma_h(f_{n_k})=\sup_{n_k\in\N}r_{n_k}[1_{A}]=\infty, 
    \]
    so we have $\gamma_h(f)=\sup_{n\in\N}\gamma_h(f_n)$ in this case too.

    \vspace{2mm}
    \noindent
    \textbf{(v) Preservation of compact containment}:
    Finally, we show that $\gamma_h$ preserves compact containment. Suppose $f \ll g$ in $\mathrm{Lsc}([0,1], \overline{\N})$, then we know in particular $f \leq g$.

    \vspace{2mm}
    \noindent
    \textbf{Case 1}: If either $f$ or $g$ is compact, then either $\gamma_h(f)$ or $\gamma_h(g)$ is compact as $\gamma_h$ sends compact elements to compact elements by construction. 
    By Remark \ref{rem: either compact}, to show $\gamma_h(f) \ll \gamma_h(g)$ it suffices to show $\gamma_h(f) \leq \gamma_h(g)$, and this follows from $f \leq g$ and part \textbf{(iii)} that says $\gamma_h$ is order-preserving.

    \vspace{2mm}
    \noindent
    \textbf{Case 2}: If both $f$ and $g$ are noncompact, then we claim there exists a continuous function $r: [0,1] \longrightarrow [0,\infty)$ such that, $\gamma_h(f)(\tau) \leq r(\tau) < \gamma_h(g)(\tau) \quad \forall \tau \in \mathrm{QT}(A)$. Indeed, we know from Lemma \ref{strict < on open}, there exists a continuous $\phi: [0,1] \longrightarrow [0,\infty)$ such that $f \leq \phi \leq g$ on $[0,1]$, and $f\leq \phi<g$ on a nonempty open interval $U \subset [0,1]$. Define \mbox{$r(\tau):=\int_{[0,1]}\phi dh(\tau)$}, we know from Lemma \ref{r(tau) continuous} that $r$ is continuous. Then
    \begin{align*}
        r(\tau)&=\int_{[0,1]}\phi dh(\tau)
        \\&=\int_{U} \phi dh(\tau)+\int_{[0,1]-U} \phi dh(\tau) 
        \\&<\int_{U}gdh(\tau)+\int_{[0,1]-U}gdh(\tau)
        \\&=\int_{[0,1]}g dh(\tau)
        \\&=\gamma_h(g)(\tau)
    \end{align*}
    Since the Lebesgue integral is order-preserving, $\gamma_h(f)(\tau)\leq r(\tau)$ for all $\tau \in \mathrm{QT}(A)$.
    By Proposition \ref{Cu(A) cc}, this implies that $\gamma_h(f) \ll \gamma_h(g)$.
\end{proof}

Now that we know $\gamma_h: \mathrm{Cu}(C[0,1]) \longrightarrow \mathrm{Cu}(A)$ is a $\mathbf{Cu}$-morphism, Theorem \ref{Cu-morphism lifts to $*$-homomorphism} says it lifts to a $*$-homomorphism $\Gamma_h: C[0,1] \longrightarrow A$, that is, $\mathrm{Cu}(\Gamma_h)=\gamma_h$. We also know a $*$-homomorphism $\Gamma_h: C[0,1] \longrightarrow A$ induces a continuous affine map between the spaces of quasitraces $\Gamma_h^{\#}: \mathrm{QT}(A) \longrightarrow \mathrm{QT}(C[0,1])$ where $\Gamma_h^{\#}(\tau)=\tau \circ \Gamma_h$ for all $\tau \in \mathrm{QT}(A)$.  We will show in the proof of Theorem \ref{main} below that $\Gamma_h^{\#} = h$, so that $\gamma_h$ represents a lifting of $h$ to the level of Cuntz semigroups. 

\begin{proof}[Proof of Theorem 1.1]
    Define $a=\Gamma_h(id)$, where $id$ represents the identity function on $[0,1]$. It is clear that $a$ is self-adjoint and $\norm{a}=1$, so the spectrum $\sigma(a)$ is inside $[0,1]$. 
    Pick an arbitrary $\tau \in \mathrm{QT(A)}$. 
    We will show $f_{a}(\tau)=h(\tau)$ by showing they agree on any proper open subset $O$ of $[0,1]$. 
    Pick $b \in C[0,1]$ with $\operatorname{supp}_{\mathrm{o}}(b)=O$. 
    We know from Proposition \ref{<= for elements in C(X)} that the above choice is independent of the representative $b$. 
    Let us denote the quasitrace $\Gamma_h^{\#}(\tau)=\tau \circ \Gamma_h$ on $C[0,1]$ by $\Tilde{\tau}$.  Since $C[0,1]$ is commutative, $\Tilde{\tau}$ is automatically a trace. Note that $f_{a}(\tau)$ is actually the probability measure on $\sigma(a)$ induced by $\Tilde{\tau}$ (which eventually depends on $\tau$), so by Proposition \ref{rank, measure of suppport}, $f_{a}(\tau)(O)=f_{a}(\tau)(\operatorname{supp}_{\mathrm{o}}(b))=d_{\Tilde{\tau}}([b])$. 
    We know by Proposition II.1.8 in \cite{BH81} that quasitraces are norm-continuous. Combining this with the continuity of $\Gamma_{h}$ and $b$, we get
    \begin{align*}
        d_{\Tilde{\tau}}([b])=\lim_{n \rightarrow \infty} \Tilde{\tau}(b^{1/n})=\lim_{n \rightarrow \infty} \tau(\Gamma_h(b^{1/n})) =\lim_{n \rightarrow \infty} \tau(\Gamma_h(b)^{1/n})=d_{\tau}([\Gamma_h(b)]).
    \end{align*}
    
     We have $\gamma_h=\mathrm{Cu}(\Gamma_h)$, and $\mathrm{Cu}(\Gamma_h)([b])$ is computed as $\mathrm{Cu}([\Gamma_h(b)])$.  Therefore,
    \begin{align*}
        \gamma_h([b])(\tau)=\mathrm{Cu}(\Gamma_h)([b])(\tau)=\mathrm{Cu}([\Gamma_h(b)])(\tau)=d_{\tau}([\Gamma_h(b)]).
    \end{align*}
    Again, by Proposition \ref{<= for elements in C(X)}, $[b]=\chi_{\operatorname{supp}_{\mathrm{o}}(b)}$, so
    \begin{align*}
        \gamma_h([b])(\tau)=\gamma_h(\chi_{\operatorname{supp}_{\mathrm{o}}(b)})(\tau)=\int_{[0,1]}\chi_{\operatorname{supp}_{\mathrm{o}}(b)}dh(\tau) = h(\tau)(\operatorname{supp}_{\mathrm{o}}(b))=h(\tau)(O).
    \end{align*}
    Therefore, $f_{a}(\tau)(O)=d_{\Tilde{\tau}}([b])=d_{\tau}([\Gamma_h(b)])=\gamma_h([b])(\tau)=h(\tau)(O)$, as required.

    Let us show finally that the spectrum $\sigma(a)$ is exactly the closed unit interval $[0,1]$. 
    Suppose it is not, then by compactness of $\sigma(a)$, there is an open interval $U$ inside $[0,1]-\sigma(a)$. 
    Pick two points $\epsilon$ and $\epsilon'$ inside $U$ such that $0<\epsilon<\epsilon'$. It follows that $(\epsilon,1] \cap \sigma(a)=(\epsilon',1] \cap \sigma(a)$.
    Since $f_{a}$ is the induced probability measure on $\sigma(a)$, we have $f_{a}(\tau)((\epsilon,1])=f_{a}(\tau)((\epsilon',1])$. 
    We have shown that $f_{a}=h$, so this means $h(\tau)((\epsilon,1])=h(\tau)((\epsilon',1])$, and hence $h(\tau)((\epsilon,\epsilon'))=0$, which violates the faithfulness of $h(\tau)$.
    Therefore $\sigma(a)=[0,1]$.
\end{proof}

\bibliographystyle{plain}
\bibliography{main}

\address{Department of Mathematics, Purdue University, 150 N University St, West Lafayette IN, \texttt{atoms@purdue.edu}, \texttt{wan60@purdue.edu}}
\end{document}